\documentclass[11pt]{amsart}
\usepackage{setspace}
\usepackage{amsmath, amsfonts, amsthm, amstext, amssymb, xspace, float, mathtools}
\usepackage{amssymb}
\usepackage{newtxtext}
\usepackage[all,2cell]{xy}
\usepackage{verbatim}
\usepackage{hyperref}
\usepackage{color}
\usepackage[normalem]{ulem}
\usepackage[usenames,dvipsnames]{xcolor}
\usepackage{graphicx}							
\usepackage{tikz}
\usetikzlibrary{arrows.meta}
\usetikzlibrary{matrix,arrows,cd,decorations.pathreplacing}
\usepackage{quiver}
\usepackage{xspace}
\usepackage[shortlabels]{enumitem}
\usepackage[capitalise, nameinlink]{cleveref}

\usepackage[
textwidth=2.8cm,
textsize=small,
colorinlistoftodos]
{todonotes}
\newtheorem{ThmAlpha}{Theorem}

\newtheorem{CorAlpha}{Corollary}

\DeclareMathSymbol{\ssquare}{\mathbin}{AMSa}{"03}
\DeclareMathSymbol{\square}{\mathbin}{AMSa}{"03}
\DeclareMathSymbol{\Box}{\mathbin}{AMSa}{"03}

\newcommand{\category}{{$\infty$-cat\-e\-go\-ry}\xspace}
\newcommand{\categories}{$\infty$-cat\-e\-gories\xspace}

\def\t{\mathbb{T}}

\def\c{\mathbb{C}}

\newcommand{\mft}{\underline{\mathbb{F}}_2}

\newcommand{\res}{\mathrm{res}}

\newcommand{\colim}{\operatorname*{colim}}

\newcommand{\TT}{\mathbb{T}}
\newcommand{\zz}{\mathbb{Z}}
\newcommand{\CP}{\mathbb{C}P}

\DeclareMathOperator{\THH}{THH}
\DeclareMathOperator{\TC}{TC}
\DeclareMathOperator{\Sp}{Sp}
\DeclareMathOperator{\Top}{Top}
\DeclareMathOperator{\TP}{TP}
\DeclareMathOperator{\Fun}{Fun}
\DeclareMathOperator{\MUR}{\operatorname{MU_\mathbb{R}}}

\DeclareMathOperator{\BPR}{\operatorname{BP_\mathbb{R}}}
\DeclareMathOperator{\KR}{\operatorname{KR}}
\DeclareMathOperator{\TCR}{TCR}

\newcommand{\cal}[1]{\mathcal{#1}}

\newcommand{\CAlg}{\mathrm{CAlg}}

\newcommand{\Fin}{\mathrm{Fin}}

\newcommand{\coCAlg}{\mathrm{coCAlg}}

\newcommand{\uSp}{\underline{\mathrm{Sp}}}

\newcommand{\op}{\mathrm{op}}
\newcommand{\Hom}{\mathrm{Hom}}

\newcommand{\F}{\mathbb{F}}

\newcommand{\cofib}{\mathrm{cofib}}

\newcommand{\id}{\mathrm{id}}

\newcommand{\THR}{\operatorname{THR}}
\newcommand{\TPR}{\operatorname{TPR}}

\theoremstyle{plain}   
\newtheorem{thm}{Theorem}[section] 
\newtheorem{corollary}[thm]{Corollary}
\newtheorem{lemma}[thm]{Lemma}
\newtheorem{proposition}[thm]{Proposition}

\newtheorem{example}[thm]{Example}

\theoremstyle{definition}
\newtheorem{definition}[thm]{Definition}
\newtheorem{construction}[thm]{Construction}

\newtheorem{warning}[thm]{Warning}

\theoremstyle{remark}
\newtheorem{remark}[thm]{Remark}

\newtheorem{notation}[thm]{Notation}

\makeatletter
\let\c@equation\c@thm

\makeatother

\makeatletter
\newcommand*{\relrelbarsep}{.386ex}
\newcommand*{\relrelbar}{%
  \mathrel{%
    \mathpalette\@relrelbar\relrelbarsep
  }%
}
\newcommand*{\@relrelbar}[2]{%
  \raise#2\hbox to 0pt{$\m@th#1\relbar$\hss}%
  \lower#2\hbox{$\m@th#1\relbar$}%
}
\providecommand*{\rightrightarrowsfill@}{%
  \arrowfill@\relrelbar\relrelbar\rightrightarrows
}
\providecommand*{\leftleftarrowsfill@}{%
  \arrowfill@\leftleftarrows\relrelbar\relrelbar
}
\providecommand*{\xrightrightarrows}[2][]{%
  \ext@arrow 0359\rightrightarrowsfill@{#1}{#2}%
}
\providecommand*{\xleftleftarrows}[2][]{%
  \ext@arrow 3095\leftleftarrowsfill@{#1}{#2}%
}
\makeatother

\usepackage{xcolor}
\definecolor{seagreen}{RGB}{46,139,87}
\definecolor{maroon}{RGB}{128,0,0}
\definecolor{darkviolet}{RGB}{210,150,180}

\UseAllTwocells

\title{On homological Real trace methods}
\author[M. Cho]{Myungsin Cho}
\author[T. Gerhardt]{Teena Gerhardt}
\author[L. Keenan]{Liam Keenan}
\author[J.C. Moreno]{Juan C. Moreno}
\author[J.D. Quigley]{J.D. Quigley}

\begin{document}

\maketitle

\begin{abstract}
We develop homological Real trace methods, an approach to understanding the continuous mod two Bredon homology of Real topological periodic homology and Real topological negative cyclic homology, generalizing prior work of Bruner and Rognes. We use this new approach to do several computations, including the continuous mod two Bredon homology of the Real topological negative cyclic homology of Real bordism. 
\end{abstract}

\section{Introduction}
In recent years, the study of algebraic $K$-theory has seen huge developments via trace methods. In this approach, algebraic $K$-theory is approximated by topological analogues of  constructions from classical algebra, including topological Hochschild homology (THH), topological periodic homology (TP), topological negative cyclic homology (TC$^{-}$), and topological cyclic homology (TC). There is a cyclotomic trace map \cite{BHM93} relating algebraic $K$-theory and topological cyclic homology 
\[
K(R) \to \TC(R),
\] 
and the Dundas--Goodwillie-McCarthy theorem \cite{DGM12} says that algebraic $K$-theory is well-approximated by topological cyclic homology. 
By \cite{NS18}, for any connective $E_1$-ring $R$ there is a fiber sequence (after $p$-completion):
$$\TC(R) \to \TC^-(R) \to \TP(R),$$
so understanding $\TC^{-}$ and $\TP$ gives computational purchase on $\TC$. 

Topological Hochschild homology is a $\mathbb{T}$-equivariant spectrum, where $\mathbb{T}$ denotes the circle group. Topological negative cyclic homology and topological periodic homology are defined as the homotopy $\mathbb{T}$-fixed points and the $\mathbb{T}$-Tate construction of THH respectively. Thus, the key tools for computing $\TC^{-}(R)$ and $\TP(R)$ are the homotopy fixed point and Tate spectral sequences. One challenge of working with these spectral sequences is that they take as input $\pi_\ast(\THH(R))$. In other words, in order to work with them one must have a full understanding of the homotopy groups of $\THH(R)$. 

In response to this difficulty, in \cite{BR05}, Bruner and Rognes developed the theory of ``homological trace methods" to compute the (continuous) mod $p$ homology groups of topological periodic and negative cyclic homology. In particular, they developed homological homotopy fixed point and Tate spectral sequences, which take as input the mod $p$ homology of $\THH(R)$ rather than the homotopy groups. The continuous homology groups computed by Bruner and Rognes' spectral sequences can then be used as input for the inverse limit Adams spectral sequence, which in principle can be used to recover the $p$-complete homotopy type of $\TP$ and $\TC^-$. The advantage of working with homology first is that the B{\"o}kstedt spectral sequence gives a good handle on the mod $p$ homology of $\THH$ of many ring spectra, while the homotopy groups of $\THH$ are often much harder to compute. Bruner and Rognes use this approach to study the topological negative cyclic homology of spectra including $MU$ and $BP$. We review some of their work in \cref{Sec:Background}.

Real algebraic K-theory ($\KR$), defined by Hesselholt and Madsen \cite{HM13}, is an invariant of rings with anti-involution which simultaneously encodes algebraic K-theory, Hermitian K-theory, and L-theory. The development of trace methods in the Real setting has been the subject of much research (see, for instance, \cite{Dot12, DMPR17, QS21b, AKGH21, DMP24, Par24, AKKQ25, yang2025filteredhochschildkostantrosenbergtheoremreal}). As with ordinary algebraic K-theory, Real algebraic K-theory is approximated by a series of Real trace invariants called Real topological Hochschild homology ($\THR$), Real topological cyclic homology ($\TCR$), Real topological periodic homology ($\TPR$), and Real negative cyclic homology ($\TCR^-$). Just as the Dundas--Goodwillie--McCarthy theorem implies that algebraic K-theory is well-approximated by topological cyclic homology, Harpaz, Nikolaus, and Shah have announced results which imply that $\TCR$ is a good approximation to $\KR$. 

In this paper, we develop an analogue of Bruner and Rognes' homological trace methods for Real trace theories. Real topological Hochschild homology (THR) is an $O(2)$-equivariant spectrum, but in contrast with classical trace methods where $\TC^-$ and $\TP$ are obtained by taking the homotopy fixed points and Tate construction with respect to $\t$, the Real analogues $\TCR^-$ and $\TPR$ are defined using the \emph{parametrized} homotopy fixed point and Tate constructions, which take as input $C_2$-spectra with twisted $\t$-action \cite{QS25}. These more complicated constructions are necessary to ensure that $\TCR^-$ and $\TPR$ are genuine $C_2$-spectra. See \cref{Sec:SS} for further discussion. 

For a $C_2$-spectrum with twisted $\t$-action $X$, the $C_2$-spectra $X^{h_{C_2}\t}$ and $X^{t_{C_2}\t}$ can be analyzed using the parametrized homotopy fixed point and Tate spectral sequences, whose input is $\pi_\star X$, the $RO(C_2)$-graded homotopy groups of $X$. As these groups are often hard to compute, we are motivated to explore an alternative based on Bredon homology. In particular, we construct a homological parametrized homotopy fixed point spectral sequence. A homological parametrized Tate spectral sequence can be constructed analogously. Below, we write $\mathcal{A}_\star^{C_2}$ for the $C_2$-equivariant dual Steenrod algebra \cite{HK01}.
We write $H_\star = H_\star(pt;\mft)$ for the $RO(C_2)$-graded Bredon homology of a point with coefficients in the constant $C_2$-Mackey functor $\mft$.

\begin{ThmAlpha}[{\cref{thm:SSadd} and \cref{Prop:MultStr}}]
Let $X$ be a $C_2$-spectrum with twisted $\t$-action. There is a conditionally convergent spectral sequence of $\mathcal{A}_\star^{C_2}$-comodules 
\[E^2_{\ast,\star} = H_\star(X)\langle\bar{y}\rangle \Rightarrow H^c_\star(X^{h_{C_2}\t}; \mft )\]
converging to the continuous $\mft$-Bredon homology of $X^{h_{C_2}\t}$.

If $X$ is moreover an $E_{\infty}$-algebra in $C_2$-spectra with twisted $\t$-action, then this is a multiplicative spectral sequence of $\mathcal{A}_\star^{C_2}$-comodule algebras, where $H_\star(X)$ has the Pontryagin product and the angle brackets indicate that there is a multiplicative relation 
\[\bar{y}a = \mu_\star(a)\bar{y}\]
where $\mu_\star $ is induced by the restriction of the $\t$-action map to the Weyl group $\mu_2$ of $C_2$ in $O(2)$ (see \cref{SS:d2mu} for details).
\end{ThmAlpha}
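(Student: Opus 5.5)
We follow the strategy of Bruner--Rognes and replace the classifying space of $\t$ by its parametrized analogue. A $C_2$-spectrum with twisted $\t$-action is a spectrum with an action of $O(2)=\t\rtimes C_2$. The parametrized homotopy fixed points $X^{h_{C_2}\t}$ can be computed as the genuine $C_2$-fixed points of the internal mapping $C_2$-spectrum $F(E_{C_2}\t_+, X)^{\t}$, where $E_{C_2}\t$ is the universal $O(2)$-space whose $\t$-orbit space is $B_{C_2}\t \simeq \mathbb{C}P^\infty_\tau$, the complex projective space with complex conjugation.

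\textbf{Step 1: the tower.} Filter $E_{C_2}\t = S(\infty\mathbb{C})$ by the unit spheres $S(\mathbb{C}^n)$, with $C_2$ acting by conjugation. The successive quotients are $S(\mathbb{C}^{n})_+\wedge_{\t}(\cdot)$ cells of the form $O(2)_+\wedge_{C_2} S^{n\rho-?}$, i.e. free $\t$-cells indexed on representation spheres $S^{n\rho}$ with $\rho$ the regular representation of $C_2$. This gives a tower $X^{h_{C_2}\t}=\lim_n F(S(\mathbb{C}^n)_+,X)^{\t}$. Its layers are, after untwisting by the Wirthmüller/Adams isomorphism for the free $\t$-cells, $\Sigma^{-n\rho}X$ up to a shift by the adjoint representation (which is the sign representation $\sigma$ here, giving the $\Sigma^{\sigma}$ twist).

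\textbf{Step 2: apply homology.} Smash the finite stages with $H\mft$ and take $C_2$-equivariant homotopy. Use the fact that $S(\mathbb{C}^n)_+$ is a finite $O(2)$-complex, so $H_\star$ commutes with the finite limits. This yields an exact couple, and hence a spectral sequence. We define continuous homology as $H^c_\star(X^{h_{C_2}\t})=\lim_n H_\star(F(S(\mathbb{C}^n)_+,X)^{\t})$. The $E_1$ page consists of copies of $H_\star(X)$ shifted by $-n\rho$ (and by $\sigma$ in the odd filtration). The $d_1$ is induced by the attaching maps. On the even/odd pair it is the action of the fundamental class of $\t$, which is of degree $\sigma$ up to sign, together with the restricted $\mu_2$-action. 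Its homology is $E^2=H_\star(X)\langle\bar y\rangle$, with $\bar y$ of degree $-\rho$ recording the cohomology class of $\mathbb{C}P^\infty_\tau$. Here we use that $H^\star(\mathbb{C}P^\infty_\tau;\mft)$ is free over $H^\star$ on powers of a class in degree $\rho$ (Kronholm/Hu--Kriz), so the filtration spectral sequence for $\mathbb{C}P^\infty_\tau$ itself collapses.

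\textbf{Step 3: convergence and comodule structure.} Boardman's criterion gives conditional convergence to the limit, since it is a half-plane spectral sequence with exiting differentials. The $\mathcal{A}^{C_2}_\star$-coaction exists because each finite stage is an $H\mft$-module-level homology of a spectrum. The maps in the tower are spectrum maps, so the coaction is compatible with the exact couple. We need $H\mft\wedge H\mft$ to be flat over $H_\star$ (Hu--Kriz) to get Künneth isomorphisms. This is needed for the comodule structure and, later, for the products.

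\textbf{Step 4: multiplicativity.} For $X$ an $E_\infty$-algebra, use the diagonal of $E_{C_2}\t$ with cellular approximation to get pairings of towers $F(S(\mathbb{C}^n)_+,X)\wedge F(S(\mathbb{C}^m)_+,X)\to F(S(\mathbb{C}^{n+m})_+,X)$. These are compatible with the filtration, so the spectral sequence is multiplicative with comodule-algebra structure. The twisted relation $\bar y a=\mu_\star(a)\bar y$ arises because the generator $\bar y$ lives on a cell on which $C_2$ acts through the reflection in $O(2)$. Commuting past it introduces the action of the Weyl element, i.e. the restriction of the $\t$-action map to $\mu_2$.

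The main obstacle is Step 2 together with this last relation. We must identify the $E^2$ term precisely in the $RO(C_2)$-graded setting, where $H_\star$ is not a field and the cells carry nontrivial twists. We would first reduce to the case $X=H\mft\wedge S^0$ with trivial action, using the cellular structure of $\mathbb{C}P^\infty_\tau$. Then we would pin down $\bar y$ and the $\mu_\star$-twist by comparing with the underlying Bruner--Rognes spectral sequence and the geometric fixed points.
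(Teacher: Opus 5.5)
Your overall framework (the tower of unit spheres $S(\c^n)$, Bredon homology of the finite stages, Boardman, a cellularly approximated diagonal) matches the paper's, but the identification of $E^2$ in Step 2 is wrong as stated. The layers are $F(\t_+\wedge S^{n\rho},X)^\t\simeq\Sigma^{-n\rho}X$ by the coinduction adjunction, with no adjoint-representation shift. The Wirthm\"uller $\Sigma^\sigma$-twist belongs to homotopy orbits and the norm map, not to fixed points. Each stage attaches a whole free slice cell $\t\times D^{n\rho}$, so $E_{C_2}\t^{(2n)}=E_{C_2}\t^{(2n+1)}=S(\c^{n+1})$. Hence $E^1$ is concentrated in even filtrations, with one copy of $H_\star(X;\mft)$ in each, and $E^1=E^2=H_\star(X;\mft)\langle\bar y\rangle$ with nothing to compute. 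In your picture, odd columns are shifted by $\sigma$ and $d_1$ is the fundamental class of $\t$. Then the homology of $d_1$ would be a $\bar\sigma$-homology (essentially the $E^3$-term of the actual spectral sequence), not $H_\star(X)\langle\bar y\rangle$; for instance $\bar\sigma\bar b_i\neq 0$ in $H_\star(\THR(\MUR);\mft)$. In fact $\bar\sigma$ enters only as the differential $d^2(x)=\bar y\cdot\bar\sigma(x)$. The collapse for $\CP^\infty$ with conjugation plays no role in the general $E^2$-term. A minor point: the differentials are entering, not exiting. Conditional convergence to the limit comes from the colimit of the tower being contractible, and strong convergence needs $RE^\infty=0$.

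In Step 4, the diagonal does not give pairings $F(S(\c^n)_+,X)\wedge F(S(\c^m)_+,X)\to F(S(\c^{n+m})_+,X)$. What you need is an $O(2)$-equivariant diagonal, homotoped by equivariant cellular approximation, with $d(E_{C_2}\t^{(k)})\subset\bigcup_{a+b\le k}E_{C_2}\t^{(a)}\times E_{C_2}\t^{(b)}$. This induces maps of stunted pieces $E_{C_2}\t^{-s-s'+r-1}_{-s-s'}\to E_{C_2}\t^{-s+r-1}_{-s}\wedge E_{C_2}\t^{-s'+r-1}_{-s'}$, hence a multiplicative Cartan--Eilenberg system, which is what makes every $d^r$ a derivation. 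More importantly, the relation $\bar y a=\mu_\star(a)\bar y$ is the substantive content, and your heuristic does not prove it. Reducing to $X$ with trivial action cannot detect it, since then $\mu_\star=\id$.

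The missing step is to identify the two components of the filtered diagonal on the bottom cell, $d_{1,0}$ and $d_{0,1}$, each from $\t_+\wedge S^{\rho}$ to $\t_+\wedge\t_+\wedge S^{\rho}$ up to reordering. One has $d_{1,0}\simeq\Delta\wedge\id_{S^\rho}$ but $d_{0,1}\simeq\Delta^\mu\wedge\id_{S^\rho}$, where $\Delta^\mu(z)=(-z,z)$. Since $\t$ acts on $F(\t_+,H\mft\wedge R)^\t$ by conjugation, precomposing with $-1$ agrees with postcomposing with the Weyl action, and this gives the twisted commutation rule. The paper proves $d_{0,1}\simeq\Delta^\mu\wedge\id_{S^\rho}$ in two steps. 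First, the tom Dieck splitting shows that $\pi_0^{C_2}(S^\sigma\times S^\sigma)_+\cong\zz^5$ is detected by restriction and geometric fixed points. Second, geometric fixed points are checked on an explicit $\mu_2$-equivariant cellular deformation of the diagonal of $S(\mathbb{R}^2)$. Your closing suggestion to compare with the underlying and geometric fixed points points this way. Without such a detection statement, which is not automatic for $C_2$-equivariant classes, it does not pin down the twist.
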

The continuous Bredon homology appearing in the statement will be defined in \cref{def:continuous-bredon-homology}.

We describe the differentials in this spectral sequence in \cref{Sec:Diff}, lifting results of Bruner and Rognes to this context. Bruner and Rognes also produced many infinite cycles in their spectral sequence using Dyer--Lashof operations. In classical stable homotopy theory, these Dyer--Lashof operations arise in the mod $p$ homology of any $\mathbb{E}_\infty$-ring spectrum, but in $C_2$-equivariant homotopy theory, one obtains equivariant Dyer--Lashof operations only when working with $C_2$-$\mathbb{E}_\infty$-ring spectra, which typically have more structure than $\mathbb{E}_\infty$-rings in $C_2$-spectra; we refer to \cite{Yan25} and \cite{LLP25} for further discussion on equivariant multiplicative structures. While a full study of $C_2$-equivariant Dyer--Lashof operations has not been made, we are able to use partial results of D. Wilson \cite{Wil17} to prove the following:

\begin{ThmAlpha}[{\cref{thm:pc}}]
Let $R$ be a $C_2$-$\mathbb{E}_\infty$-algebra in the category of $C_2$-spectra with twisted $\t$-action. 
Suppose that $x \in H_V \left(R;\mft \right)$ survives to the $E^{2r}$-term $E^{2r}_{0,V} \subset H_V \left(R;\mft \right)$ of the homological parametrized homotopy fixed point spectral sequence for $R$, and there is a nontrivial differential $d^{2r}(x) = \bar{y}^r \cdot \delta x$.
\begin{enumerate}[(a)]
	\item If $|x| = k\rho$, then the $2r$ classes
	\[Q^{k\rho}(x), \ Q^{k\rho+\sigma}(x), Q^{(k+1)\rho}(x), \ \ldots, \ Q^{(k+r-1)\rho}(x), \ \text{ and } \ Q^{(k+r-1) \rho+\sigma}(x) + x \delta x\]
	all survive to the $E^\infty$-term.
	\item If $|x| = k\rho+1$, then the $2r$ classes
	\[Q^{k\rho+\sigma}(x), Q^{(k+1)\rho}(x), Q^{(k+1)\rho+\sigma}(x), \ \ldots, \ Q^{(k+r-1)\rho + \sigma}(x), \ \text{ and } \ Q^{(k+r)\rho}(x) + x \delta x\]
	all survive to the $E^\infty$-term.
\end{enumerate}
\end{ThmAlpha}

As an application, we have the following computations:  

\begin{ThmAlpha}[{\cref{thm:HcTHRMUR}}]
The $E^\infty$-term of the homological parametrized homotopy fixed point spectral sequence converging to the continuous homology, $H^c_{\star} \left(\TCR^-(\MUR); \mft \right)$, is given by
\[E^\infty_{\ast,\star} \cong H_\star \left[ \bar{y}, \bar{b}_i^2 \mid  i>0 \right] \otimes_{H_\star} \Lambda_{H_\star} \left[ \bar{b}_i\bar{\sigma}\bar{b}_i \mid  i>0 \right] \]
plus a simple $\bar{y}$-torsion module in filtration zero.
\end{ThmAlpha}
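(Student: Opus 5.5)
The plan follows Bruner--Rognes' computation of $H^c_*(TC^-(MU);\mathbb{F}_p)$, transported to the $C_2$-equivariant setting via the homological parametrized homotopy fixed point spectral sequence (\cref{thm:SSadd}, \cref{Prop:MultStr}).

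\textbf{Step 1: the $E^2$-term.} I first need $H_\star(\THR(\MUR);\mft)$. Using the Real B\"okstedt spectral sequence, or equivalently the equivalence $\THR(\MUR)\simeq \MUR\wedge SU_{\mathbb{R}+}$ coming from the Thom spectrum description, I expect
\[ H_\star(\THR(\MUR)) \cong H_\star(\MUR)\otimes_{H_\star}\Lambda_{H_\star}[\bar\sigma\bar b_i \mid i>0], \qquad H_\star(\MUR)\cong H_\star[\bar b_i\mid i>0], \]
with $|\bar b_i| = i\rho$ and $|\bar\sigma\bar b_i| = i\rho+1$. The relation $\bar y a=\mu_\star(a)\bar y$ should reduce to ordinary commutativity on these generators. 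To check this, I will show that $\mu_\star$ fixes $\bar b_i$ and $\bar\sigma\bar b_i$, since these classes come from the Real (conjugation-compatible) structure. The $E^2$-term is then $H_\star[\bar y,\bar b_i]\otimes\Lambda[\bar\sigma\bar b_i]$.

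\textbf{Step 2: the $d^2$-differential.} The results of \cref{Sec:Diff} lift Bruner--Rognes' description of $d^2$: it is $\bar y$ times the circle operator $\bar\sigma$ induced by the $\t$-action. This gives $d^2(\bar b_i)=\bar y\,\bar\sigma\bar b_i$, and $d^2(\bar\sigma\bar b_i)=0$ because $\bar\sigma\circ\bar\sigma=0$ (possibly up to a $\bar\sigma$-twisted correction term that must be checked). Since $\bar\sigma$ is a derivation, $E^3$ is $\bar y$ tensored with the homology of the Koszul-type complex $(H_\star[\bar b_i]\otimes\Lambda[\bar\sigma\bar b_i],\bar\sigma)$, together with a filtration-zero part coming from $\ker\bar\sigma$ modulo nothing. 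Over $\mathbb{F}_2$, the homology of $\mathbb{F}_2[b]\otimes\Lambda[\sigma b]$ under $b\mapsto\sigma b$ is $\mathbb{F}_2[b^2]\otimes\Lambda[b\sigma b]$. Tensoring over $i$ gives
\[ E^3_{>0} \cong H_\star[\bar y,\bar b_i^2]\otimes\Lambda[\bar b_i\bar\sigma\bar b_i] \]
in positive filtration. The $\bar y$-torsion in filtration zero is the image of $\bar\sigma$, which is a simple $\bar y$-torsion module.

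\textbf{Step 3: collapse.} It remains to show that $E^3=E^\infty$. This is the main obstacle, because the multiplicative structure alone does not rule out higher differentials on $\bar b_i^2$ or on $\bar b_i\bar\sigma\bar b_i$. I would apply \cref{thm:pc}(a) with $r=1$ and $x=\bar b_i$, where $|x|=i\rho$ and $\delta x=\bar\sigma\bar b_i$. It shows that $Q^{i\rho}(\bar b_i)=\bar b_i^2$ and $Q^{i\rho+\sigma}(\bar b_i)+\bar b_i\bar\sigma\bar b_i$ are infinite cycles. To use this, I need $\MUR$ (and hence $\THR(\MUR)$) to be a $C_2$-$\mathbb{E}_\infty$-ring in twisted $\t$-spectra, which holds because it is a normed Thom spectrum. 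I also need Wilson's computation showing $Q^{i\rho+\sigma}(\bar b_i)$ either vanishes or is itself a permanent cycle: it is decomposable in $H_\star(\MUR)$, lying in the subalgebra generated by the $\bar b_j$. If that fails, I would instead argue by degree, using that $\bar b_i\bar\sigma\bar b_i$ has odd fiber degree. Once the generators $\bar b_i^2$ and $\bar b_i\bar\sigma\bar b_i$, together with $\bar y$ and $H_\star$, are known to be permanent cycles, multiplicativity (\cref{Prop:MultStr}) implies that all of $E^3$ consists of permanent cycles. Since they are also not boundaries, being in positive filtration and $\bar y$-torsion free, the spectral sequence collapses at $E^3$, giving the stated $E^\infty$.
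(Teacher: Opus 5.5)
Your outline is the paper's proof. You compute $E^2$ from $H_\star\THR(\MUR)$, run $d^2(\bar b_i)=\bar y\,\bar\sigma\bar b_i$, and apply \cref{thm:pc} with $r=1$ and $x=\bar b_i$ to make $\bar b_i^2=Q^{i\rho}(\bar b_i)$ and $Q^{i\rho+\sigma}(\bar b_i)+\bar b_i\bar\sigma\bar b_i$ permanent, so the spectral sequence collapses. However, Steps 1--2 rest on three claims that are false in the Real setting.

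First, $|\bar\sigma\bar b_i|=i\rho+\sigma$, not $i\rho+1$. The operator $\bar\sigma$ is built from $\iota_\sigma\in H_\sigma(S^\sigma_+)$, and in \cref{thm:pc} the class $\delta x$ has degree $V+r\rho-1=V+\sigma$ for $r=1$. With your degree, $Q^{i\rho+\sigma}(\bar b_i)+\bar b_i\bar\sigma\bar b_i$ would not even be homogeneous. With the correct degree, your hedge in Step 3 disappears: $Q^{i\rho+\sigma}(\bar b_i)$ lies in $H_{2i\rho+\sigma}\MUR$, which is zero because $H_\star$ vanishes in every degree $m\rho+\sigma$. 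Your fallback would not have worked anyway: lying in $H_\star[\bar b_j]$ does not make a class a permanent cycle, since $\bar b_j$ itself supports a $d^2$.

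Second, $\mu_\star$ does not fix $\bar b_i$. One has $\mu_\star(x)=x+a_\sigma\bar\sigma(x)$, so $\mu_\star(\bar b_i)=\bar b_i+a_\sigma\bar\sigma\bar b_i\neq\bar b_i$ in the free $H_\star$-module. On geometric fixed points this is the factor swap on $\mathrm{MO}\wedge_{\mathrm{MU}}\mathrm{MO}$, which exchanges $\overline{\Phi}(\bar b_i)$ and $\overline{\Phi}(\bar b_i+\bar\sigma\bar b_i)$. So $E^2$ is the twisted algebra $H_\star\langle\bar y\rangle[\bar b_i]\otimes\Lambda[\bar\sigma\bar b_i]$, not a commutative one. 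This is harmless additively: $E^2_{-2n,\star}=\bar y^n\cdot H_\star\THR(\MUR)$ and $d^{2r}(\bar y^n x)=\bar y^n d^{2r}(x)$ by \cref{lemma:ystructure}. Commutativity returns at $E^3$, since $\bar y x-x\bar y=a_\sigma\bar y\,\bar\sigma(x)$ vanishes on cycles.

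Third, $\bar\sigma$ is not a derivation. By \cref{prop:twistedsigma} there is an extra term $a_\sigma\bar\sigma(a)\bar\sigma(b)$; for example, $\bar\sigma(\bar b_1\bar b_2)$ contains $a_\sigma\,\bar\sigma\bar b_1\,\bar\sigma\bar b_2$. So the Koszul computation of $\ker\bar\sigma/\mathrm{im}\,\bar\sigma$ does not apply as you state it. It can be repaired as follows:
\begin{itemize}
\item By induction on monomials, $\bar\sigma=D+R$, where $D$ is the honest derivation $\bar b_i\mapsto\bar\sigma\bar b_i$ and $R$ raises the number of exterior factors by at least two.
\item Filter by the number of exterior factors. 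This is finite in each weight, where $\bar b_i$ and $\bar\sigma\bar b_i$ have weight $i$ and $\bar\sigma$ preserves weight.
\item The resulting spectral sequence has $d_1=D$, hence $E_2=H_\star[\bar b_i^2]\otimes\Lambda[\bar b_i\bar\sigma\bar b_i]$.
\item It collapses because $\bar b_i^2$ and $\bar b_i\bar\sigma\bar b_i$ are honest $\bar\sigma$-cycles (using $(\bar\sigma\bar b_i)^2=0$), and products of $\bar\sigma$-cycles are $\bar\sigma$-cycles.
\end{itemize}

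There are also two smaller omissions. Step 1 needs that the exterior generators are literally $\bar\sigma(\bar b_i)$ and square to zero. The Thom splitting does not give you this for free; the paper uses \cref{lemma:BWLemma} and $(\bar\sigma\bar b_i)^2=\bar\sigma Q^{i\rho+\sigma}(\bar b_i)=0$. Also, the torsion $\mathrm{im}\,\bar\sigma$ in filtration zero is not generated by your permanent cycles. Its survival has to come from the $\bar y$-linearity argument behind \cref{lemma:ystructure}, not from multiplicativity.
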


\begin{ThmAlpha}[{\cref{thm:HcTHRBPRn}}]
Suppose $\BPR\langle n\rangle$ is a $C_2$-$E_\infty$-ring.
Then the $E^\infty$-term of the homological parametrized homotopy fixed point spectral sequence converging to the continuous homology
$H_\star^c\left(\TCR^{-}(\BPR\langle n\rangle) ; \mft \right)$ is given by 
\[E^\infty_{\ast,\star}\cong \frac{H_\star \left[\bar{\xi}_i',\bar{\tau}_i' \mid i\geq n+2 \right]}{(\bar{\tau}_j')^2 = u_\sigma\bar{\xi}_{j+1}'+a_\sigma\bar{\tau}_{j+1}'}\otimes_{H_\star}H_\star \left[\bar{\xi}_1^2,...,\bar{\xi}_{n+1}^2,\bar{y}\right] \otimes_{H_\star}\Lambda_{H_\star} \left[\bar{\xi}_1\bar{\sigma}\bar{\xi}_1,...,\bar{\xi}_{n+1}\bar{\sigma}\bar{\xi}_{n+1} \right]\] plus a simple $\bar{y}$-torsion module in filtration zero.
\end{ThmAlpha}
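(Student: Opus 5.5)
We follow Bruner--Rognes' computation for $BP\langle n\rangle$, transported to the $C_2$-equivariant setting; the case of $\MUR$ in \cref{thm:HcTHRMUR} is the model.

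\textbf{Input.} The $E^2$-term of the homological parametrized homotopy fixed point spectral sequence (\cref{thm:SSadd}) is $H_\star(\THR(\BPR\langle n\rangle);\mft)\langle \bar y\rangle$. We first need the $\mft$-Bredon homology of $\THR(\BPR\langle n\rangle)$. As $\mathcal{A}^{C_2}_\star$-comodule algebras, $H_\star(\BPR\langle n\rangle)$ is the subalgebra of the dual Steenrod algebra generated by $\bar\xi_1,\dots,\bar\xi_{n+1}$ (conjugates) together with $\bar\xi_i',\bar\tau_i'$ for $i\ge n+2$, subject to the relation $(\bar\tau_j')^2=u_\sigma\bar\xi_{j+1}'+a_\sigma\bar\tau_{j+1}'$. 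The Real Bökstedt spectral sequence, applied as for $\MUR$, collapses and gives
\[H_\star(\THR(\BPR\langle n\rangle)) \cong H_\star(\BPR\langle n\rangle)\otimes_{H_\star}\Lambda_{H_\star}[\bar\sigma\bar\xi_1,\dots,\bar\sigma\bar\xi_{n+1}],\]
where the $\bar\sigma$ of the $\bar\xi_i',\bar\tau_i'$ ($i\ge n+2$) vanish or are decomposable, as in the nonequivariant case. The $\mu_2$-action $\mu_\star$ fixes the Bökstedt classes up to the twist already built into $\bar y$, so the relation $\bar y a=\mu_\star(a)\bar y$ of \cref{Prop:MultStr} is effectively commutative on these generators. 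We use this to move $\bar y$ freely in the algebra.

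\textbf{Differentials.} By the $d^2$ formula of \cref{Sec:Diff}, $d^2(x)=\bar y\cdot\bar\sigma x$, a derivation. So $d^2(\bar\xi_i)=\bar y\,\bar\sigma\bar\xi_i$ for $i\le n+1$, while $d^2$ vanishes on the $\bar\xi_i',\bar\tau_i'$ with $i\ge n+2$. These latter classes are therefore infinite cycles for filtration reasons; alternatively they come from the unit map, since they lie in the image from the homology of $H\mft$-type classes.

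Computing $d^2$-homology is then a Koszul computation. Each pair $H_\star[\bar\xi_i]\otimes\Lambda[\bar\sigma\bar\xi_i]$ with $d^2(\bar\xi_i)=\bar y\,\bar\sigma\bar\xi_i$ contributes, in positive $\bar y$-filtration, $H_\star[\bar\xi_i^2]\otimes\Lambda[\bar\xi_i\bar\sigma\bar\xi_i]$. In filtration zero it contributes the cokernel part, which is killed by $\bar y$ and forms a simple $\bar y$-torsion module. Tensoring over $i=1,\dots,n+1$ with the untouched polynomial factor $H_\star[\bar y]$ and the $\bar\xi',\bar\tau'$ algebra gives the $E^3$-term in the shape claimed. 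Here we must check that the relation $(\bar\tau_j')^2=u_\sigma\bar\xi_{j+1}'+a_\sigma\bar\tau_{j+1}'$ survives; it does, since those classes are $d^2$-cycles and come from a subalgebra on which $d^2=0$.

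\textbf{Collapse.} It remains to show $E^3=E^\infty$. The generators $\bar y$, $\bar\xi_i',\bar\tau_i'$ are infinite cycles as above. For $\bar\xi_i^2$ and $\bar\xi_i\bar\sigma\bar\xi_i$ we invoke \cref{thm:pc} with $r=1$ and $x=\bar\xi_i$, whose degree is a multiple of $\rho$. Using the hypothesis that $\BPR\langle n\rangle$ is $C_2$-$E_\infty$, together with \cref{thm:pc}(a), the classes $Q^{k\rho}(\bar\xi_i)=\bar\xi_i^2$ and $Q^{k\rho+\sigma}(\bar\xi_i)+\bar\xi_i\bar\sigma\bar\xi_i$ are permanent cycles. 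We then identify the Dyer--Lashof term $Q^{k\rho+\sigma}(\bar\xi_i)$ (via Wilson's formulas) with a class already known to be an infinite cycle, namely a $\bar\xi'$ or $\bar\tau'$ class or zero, so that $\bar\xi_i\bar\sigma\bar\xi_i$ itself survives.

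Multiplicativity then makes every algebra generator of $E^3$ in positive filtration a permanent cycle, so no further differentials can originate there. Differentials landing in positive filtration from filtration zero are excluded because the filtration-zero part consists of $d^2$-cycles modulo the torsion module, whose elements are already products of infinite cycles. The main obstacle is this Dyer--Lashof identification of $Q^{k\rho+\sigma}(\bar\xi_i)$ in the equivariant setting, where Wilson's partial results must be checked to cover these degrees.
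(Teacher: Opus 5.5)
\textbf{The $E^2$-term you start from is wrong, and the error removes the part of the computation that produces the primed classes.}

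First, $H_\star\BPR\langle n\rangle\cong H_\star[\bar\xi_i,\bar\tau_{n+i}\mid i\ge 1]/(\bar\tau_j^2=u_\sigma\bar\xi_{j+1}+a_\sigma\bar\tau_{j+1})$ contains $\bar\tau_{n+1}$ (for $n=-1$ this is $\bar\tau_0$), which your list of generators omits. The $\bar\xi_i',\bar\tau_i'$ in the statement are not simply generators of $H_\star\BPR\langle n\rangle$; they involve a class $\bar x_n$ that only exists in $\THR$.

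Second, $\bar\sigma$ does not vanish on the $\tau$-classes. The class $\bar x_n=\bar\sigma\bar\tau_{n+1}$ restricts to $\sigma\bar\zeta_{n+2}$, the polynomial generator of $H_\ast\THH(\mathrm{BP}\langle n\rangle;\F_2)$. Already nonequivariantly these $\sigma$-classes are nonzero, with $\sigma\bar\zeta_{n+k}=(\sigma\bar\zeta_{n+2})^{2^{k-2}}$. Its reduced geometric fixed points are $\sigma_\mu(b_n)=b_n+\mu b_n\neq 0$. So $H_\star\THR(\BPR\langle n\rangle)$ carries an extra polynomial factor $H_\star[\bar x_n]$. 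This is \cref{prop:HTHRBPRn}, which the paper proves via \cref{lemma:BWLemma}, not via a B\"okstedt spectral sequence.

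Since $|\bar\sigma\bar\tau_k|=2^k\rho$, $\bar\sigma$ commutes with power operations, and $Q^{2^k\rho}\bar\tau_k=\bar\tau_{k+1}$, one gets $\bar\sigma\bar\tau_{n+i}=\bar x_n^{2^{i-1}}$. Hence $d^2(\bar\tau_{n+i})=\bar y\,\bar x_n^{2^{i-1}}\neq 0$ for all $i\ge 1$. So your claim that $d^2$ vanishes on the $\tau$-classes fails. Your claim that they are infinite cycles ``for filtration reasons'' also fails: filtration-zero classes can support differentials of any length, and these do.

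\textbf{The missing idea is how the primed classes arise.} The differentials on $\bar\tau_{n+1}\bar x_n^{k-1}$ hit $\bar y\,\bar x_n^k$. So positive powers of $\bar x_n$ survive to $E^3$ only in filtration zero, as $\bar y$-torsion. The surviving $\tau$-type classes are then corrected sums, not the $\bar\tau_{n+i}$ themselves.

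The paper shows these are permanent cycles by applying \cref{thm:pc}(b), not (a), to $x=\bar\tau_{n+i}$. Here $|x|=(2^{n+i}-1)\rho+1$, $r=1$ and $\delta x=\bar x_n^{2^{i-1}}$. Combined with \cref{thm:SteenrodPowerOps}, this yields the permanent cycles $\bar\xi_{n+i+1}$ and $\bar\tau_{n+i+1}+\bar\tau_{n+i}\bar x_n^{2^{i-1}}$.

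The primed classes are defined as such corrections of $\bar\tau_{n+i}$ and $\bar\xi_{n+i}$ by lower-index classes times powers of $\bar x_n$. They are chosen so that $(\bar\tau_j')^2=u_\sigma\bar\xi_{j+1}'+a_\sigma\bar\tau_{j+1}'$ holds on $E^\infty$. That relation is not inherited from a $d^2$-trivial subalgebra, as you assert.

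Your use of \cref{thm:pc}(a) on $\bar\xi_1,\dots,\bar\xi_{n+1}$ matches the paper. Without the $\bar x_n$ factor and part (b) applied to the $\bar\tau_{n+i}$, however, your argument does not compute the stated $E^\infty$-term.

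A smaller point: $d^2$ is only a Weyl-twisted derivation, and $E^2$ is not commutative, because $\mu_\star(a)=a+a_\sigma\bar\sigma a$. Commutativity appears only from $E^4$ on.
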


The spectral sequence that we construct here is obtained from an $O(2)$-equivariant filtration on 
the parametrized universal bundle, $E_{C_2}\t$. This filtration has the advantage of restricting to the same one used by Bruner--Rognes, allowing us to compare our computations to theirs. 
Alternatively, one could apply parametrized homotopy fixed points to the $C_2$-slice filtration, and consider the associated spectral sequence.
This filtration will be more harmonious with equivariant multiplicative structures, and we plan to address this in future work.

An important aspect of $C_2$-spectra with twisted $\t$-actions is that their geometric fixed point spectrum inherits an action of the Weyl group $\mu_2 = \{\pm 1\}\subset\t$ of $C_2$ in $O(2)$. 
Along the way we also consider a homological spectral sequence for the $\mu_2$-homotopy fixed points of the geometric fixed points of real trace invariants. 
This spectral sequence receives a comparison map from the homological parametrized homotopy fixed point spectral sequence. Pushing forward permanent cycles along this map allows us to 
deduce the following. 

\begin{CorAlpha}[{\cref{cor:HcgfpTHRbordism}}]
	The $E^\infty$-term of the homological homotopy fixed point spectral sequence converging to the continuous homology, $H_\ast^c \left( (\THR(\MUR)^{\Phi C_2})^{h\mu_2} ; \mathbb{F}_2 \right)$, is given by
	\[\F_2 \left[z, \overline{\Phi}(\bar{b}_i^2) \mid  i>0 \right]\otimes\Lambda_{\F_2} \left[\overline{\Phi}(\bar{b}_i\bar{\sigma}\bar{b}_i) \mid i>0 \right]\]
	plus a simple $z$-torsion module in filtration zero. The analogous result for $\BPR$ also holds. 
\end{CorAlpha}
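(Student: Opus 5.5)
The plan is to deduce the corollary from \cref{thm:HcTHRMUR} by comparing spectral sequences. The geometric fixed point functor $\Phi^{C_2}$, applied to the $O(2)$-equivariant filtration of $E_{C_2}\t$, has fixed points filtering a model of $E\mu_2$; the cellular filtration of $E_{C_2}\t$ restricts on $C_2$-fixed points to the standard cellular filtration of $E\mu_2 = S^\infty$. This gives a comparison map of spectral sequences
\[
\overline{\Phi}\colon E^r_{\ast,\star}\bigl(\THR(\MUR)\bigr) \longrightarrow E^r_{\ast,\ast}\bigl((\THR(\MUR)^{\Phi C_2})^{h\mu_2}\bigr).
\]
On $E^2$-pages it is the map $H_\star(\THR(\MUR);\mft)\langle\bar y\rangle \to H_\ast(\THR(\MUR)^{\Phi C_2};\F_2)[z]$ induced by the geometric fixed point map on Bredon homology (which kills $a_\sigma$ after inverting it, i.e.\ the map $H_\star \to \Phi^{C_2}H\mft$) and sending $\bar y\mapsto z$. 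I will take the existence and multiplicativity of this comparison map as given, as indicated in the introduction.

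The first step is to compute the target $E^2$-page. I will use $\THR(\MUR)^{\Phi C_2}\simeq \MUR^{\Phi C_2}\wedge_{\MUR}\MUR^{\Phi C_2}$, that is, $MO\wedge_{MU}MO$ or equivalently $\THR(\MUR)^{\Phi C_2}\simeq MO\wedge MO$-type input, together with the known description $H_\star(\THR(\MUR))\cong H_\star(\MUR)\otimes\Lambda[\bar\sigma\bar b_i]$. Pushing these classes forward identifies the $E^2$-term as $H_\ast(MO)\otimes\Lambda[\overline{\Phi}(\bar\sigma\bar b_i)]\otimes\F_2[z]$, with polynomial generators $\overline{\Phi}(\bar b_i)$ and additional classes coming from $\Phi^{C_2}H\mft$.

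The second step is to determine the differentials in the target, following Bruner--Rognes for $\mu_2$ (equivalently $C_2$) homotopy fixed points. The $d^2$-differential is given by the $\mu_2$-action, $d^2(a)=z\,(\mu_\ast(a)-a)$ plus the $\sigma$-operator contribution, and it is the image under $\overline{\Phi}$ of $d^2(\bar b_i)=\bar y\,\bar\sigma\bar b_i$. Naturality therefore gives $d^2(\overline\Phi\bar b_i)=z\,\overline\Phi(\bar\sigma\bar b_i)$. Taking homology of this Koszul-type $d^2$ leaves $\F_2[z,\overline\Phi(\bar b_i^2)]\otimes\Lambda[\overline\Phi(\bar b_i\bar\sigma\bar b_i)]$ together with $z$-torsion in filtration zero.

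The third step is degeneration. The classes $\bar y,\bar b_i^2,\bar b_i\bar\sigma\bar b_i$ are permanent cycles by \cref{thm:HcTHRMUR}, using \cref{thm:pc}. Their images under $\overline\Phi$ are therefore permanent cycles. Since the target $E^3$-page is generated multiplicatively by these images over $\F_2[z]$, plus the filtration-zero torsion, multiplicativity forces $E^3=E^\infty$; the $z$-torsion in filtration zero cannot support differentials for filtration reasons.

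I expect the main obstacle to be the first step: identifying the target $E^2$-page and checking that $\overline\Phi$ hits enough generators, in particular handling the extra $\Phi^{C_2}H\mft$ classes. If this identification is awkward, my first fallback is to compute the target spectral sequence directly in Bruner--Rognes' form and use $\overline\Phi$ only to supply the permanent cycles. The $\BPR$ case is handled identically, replacing \cref{thm:HcTHRMUR} with \cref{thm:HcTHRBPRn} and $MO$ with $H\F_2$-type input.
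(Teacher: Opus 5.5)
Your proposal follows the paper's own argument. You identify the target $E^4$-page as $H^\ast(B\mu_2;H_\ast T^{\Phi}(\MUR))$; your naturality computation of $d^2$ reproduces \cref{prop:gfpE4pg}, where in fact $d^2(x)=z\,(x+\mu_\ast x)$ with no extra term. You then observe that $z$, $\overline{\Phi}(\bar b_i^2)$ and $\overline{\Phi}(\bar b_i\bar\sigma\bar b_i)$ are images under \cref{prop:gfpssmap} of permanent cycles from \cref{thm:HcTHRMUR}.

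A few slips should be fixed:
\begin{itemize}
\item There are no extra $\Phi^{C_2}H\mft$ classes, so your main obstacle does not arise. The map $\overline{\Phi}$ is the reduced one ($a_\sigma\mapsto 1$, $u_\sigma\mapsto 0$) into ordinary $\F_2$-homology.
\item $T^{\Phi}(\MUR)\simeq \mathrm{MO}\wedge_{\mathrm{MU}}\mathrm{MO}$, not $\mathrm{MO}\wedge\mathrm{MO}$.
\item The filtration-zero $z$-torsion survives because $z\cdot d^{2r}(x)=d^{2r}(zx)=0$, while inductively $E^{2r}=E^4$ has no $z$-torsion in negative filtration. Filtration alone does not rule out these differentials.
\item The $\BPR$ case should cite \cref{thm:HcTHRBPR}, not \cref{thm:HcTHRBPRn}.
\end{itemize}
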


\begin{CorAlpha}[{\cref{cor:HcgfpBPRn}}]
For $-1 \leq n \leq 2$, the $E^\infty$-term of the homological homotopy fixed point spectral sequence converging to the continuous homology
$H^c_\ast  \left( \left( \THR(\BPR\langle n\rangle)^{\Phi C_2} \right)^{h\mu_2}; \mathbb{F}_2 \right)$, is given by 
\[\frac{H_\ast ( \mathrm{BP}\langle n\rangle ;\mathbb{F}_2) \left[\bar{\zeta}_1',...,\bar{\zeta}_{n+1}' \right][z,b_n']}{((\bar{\zeta}_i')^2-\bar{\zeta}_i^4)},\]
plus a simple $z$-torsion module in filtration zero.
\end{CorAlpha}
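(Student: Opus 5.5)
The plan is to deduce the corollary from \cref{thm:HcTHRBPRn}, or rather from the corresponding computation of the $E^\infty$-term for $\TCR^-(\BPR\langle n\rangle)$, by pushing permanent cycles along the comparison map of spectral sequences
\[
\bigl(H_\star(\THR(\BPR\langle n\rangle))\langle \bar y\rangle \Rightarrow H^c_\star(\TCR^-(\BPR\langle n\rangle))\bigr) \longrightarrow \bigl(H_\ast(\THR(\BPR\langle n\rangle)^{\Phi C_2})[z] \Rightarrow H^c_\ast((\THR(\BPR\langle n\rangle)^{\Phi C_2})^{h\mu_2})\bigr).
\]
This map is induced by geometric fixed points together with the map $E_{C_2}\t^{\Phi C_2}\simeq E\mu_2$. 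On $E^2$-terms it is given by the map $\overline{\Phi}$ from $\mft$-Bredon homology to $\F_2$-homology of geometric fixed points, and it sends $\bar y \mapsto z$. The restriction $-1\le n\le 2$ is exactly the range in which $\BPR\langle n\rangle$ is known to be $C_2$-$E_\infty$ ($\mft$, $H\underline{\zz}$, $k_\mathbb{R}$, and the Real $\mathrm{tmf}_1(3)$-type form). So the hypothesis of \cref{thm:HcTHRBPRn} holds, and the comparison is multiplicative.

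\textbf{Step 1: identify the input.} I first identify the $E^2$-term of the target. The geometric fixed points of $\THR(\BPR\langle n\rangle)$ are $\BPR\langle n\rangle^{\Phi C_2}\otimes_{\BPR\langle n\rangle} \BPR\langle n\rangle^{\Phi C_2}$ (equivalently $\THH$ with reflexive structure). Its $\F_2$-homology is then $H_\ast(\mathrm{BP}\langle n\rangle)$-type classes $\bar\zeta_i$, together with the $\mu_2$-twisted copies $\bar\zeta_i'$, plus a class $b_n'$ detecting the circle direction. I use the known computation of $\BPR\langle n\rangle^{\Phi C_2}$ (for example $H\F_2$ for $n=0$) and the Künneth spectral sequence, which collapses for degree reasons.

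\textbf{Step 2: compute the first differential.} Following \cref{Sec:Diff}, the $d^2$ differential is $\bar y\cdot(\mu_\ast - 1)$ on the source, so on the target it is $z\cdot(\mu_\ast-1)$, where $\mu_\ast$ swaps $\bar\zeta_i$ and $\bar\zeta_i'$ up to the Frobenius-type correction. The kernel modulo the image is then generated by the invariants: $H_\ast(\mathrm{BP}\langle n\rangle)$, the classes $\bar\zeta_i'$ subject to $(\bar\zeta_i')^2=\bar\zeta_i^4$, and $b_n'$. The image gives the $z$-torsion in filtration zero.

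\textbf{Step 3: show collapse.} Higher differentials vanish because every algebra generator of the proposed $E^3$-term is the image under $\overline{\Phi}$ of a permanent cycle from \cref{thm:HcTHRBPRn}. The classes $\bar\xi_i^2$, $\bar\xi_i\bar\sigma\bar\xi_i$, $\bar y$ and $\bar\xi_i',\bar\tau_i'$ survive there. Their images are respectively $\bar\zeta_i^{2}$-type classes, $\bar\zeta_i'$-type classes, $z$, and generators of $H_\ast(\mathrm{BP}\langle n\rangle)$. The images of the Dyer--Lashof classes of \cref{thm:pc} supply $b_n'$. Since the target spectral sequence is multiplicative, a set of permanent algebra generators forces $E^3=E^\infty$.

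\textbf{Main obstacle.} The hard step is Step 3's identification of $\overline{\Phi}$ on specific classes, in particular showing that $b_n'$ and each $\bar\zeta_i'$ are hit. This requires understanding how $\Phi^{C_2}$ interacts with the $C_2$-Dyer--Lashof operations of \cite{Wil17}: the expectation is that $\Phi Q^{k\rho}$ becomes an ordinary squaring or Dyer--Lashof operation. It also requires showing that $u_\sigma\mapsto 1$ and $a_\sigma\mapsto 1$ appropriately, so that the relation $(\bar\tau_j')^2=u_\sigma\bar\xi_{j+1}'+a_\sigma\bar\tau_{j+1}'$ maps to the claimed relation. I would attempt this first for $n=0$, where everything reduces to $H\F_2$, and then check the generators degree by degree for $n\le 2$.
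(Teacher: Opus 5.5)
Your route is the same as the paper's. You identify the $E^4$-term as $H^\ast(\mu_2;H_\ast T^{\Phi}(\BPR\langle n\rangle))$ via \cref{prop:gfpE4pg}, then get collapse by pushing permanent cycles from \cref{thm:HcTHRBPRn} forward along $\overline{\Phi}$ of \cref{prop:gfpssmap}. However, Steps 1--2 use a wrong picture of the input, and carried out as you describe them, the $E^4$ computation would not give the stated answer.

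\textbf{The input.} First, $\BPR\langle n\rangle^{\Phi C_2}\simeq H\F_2[b_n]$ with $|b_n|=2^{n+1}$. For $n=0$ this is $H\F_2[b_0]$, not $H\F_2$. So $b_n$ comes from the geometric fixed points of $\BPR\langle n\rangle$, not from a ``circle direction.'' Second, by \cref{cor:residualmugfp}, $\mu_2$ acts on $T^{\Phi}\simeq H\F_2[b_n]\wedge_{\mathrm{BP}\langle n\rangle}H\F_2[b_n]$ by swapping the two factors. Combining this with \cref{prop:gfpmodulestructure}, which pins down $H_\ast\mathrm{BP}\langle n\rangle\to\mathcal{A}_\ast[b_n]$, the K\"unneth computation gives \cref{prop:HTHRBPRnPhiC2}. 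There $\mu$ swaps $h_i\leftrightarrow\mu h_i$ and $b_n\leftrightarrow\mu b_n$.

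This is a permutation module over $H_\ast\mathrm{BP}\langle n\rangle$ on monomials. Hence $\ker(1+\mu)/\mathrm{im}(1+\mu)$ is spanned by the $\mu$-fixed monomials, i.e. by products of $\bar\zeta_i'=h_i\,\mu h_i$ and $b_n'=b_n\,\mu b_n$. In particular, $\mu$ does not swap $\bar\zeta_i$ and $\bar\zeta_i'$; if it did, $\bar\zeta_i'$ would not be invariant and would not survive to negative filtration. Likewise, $(\bar\zeta_i')^2=\bar\zeta_i^4$ is simply $h_i^2(\mu h_i)^2$ computed on $E^4$. It is not something to transport from $(\bar\tau_j')^2=u_\sigma\bar\xi_{j+1}'+a_\sigma\bar\tau_{j+1}'$, and in any case reduced $\overline{\Phi}$ sends $u_\sigma\mapsto 0$, not $1$.

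\textbf{The collapse step.} Your ``main obstacle'' does not arise. You never need to know how $\Phi^{C_2}$ interacts with Wilson's operations. The operations are evaluated in the source via \cref{thm:SteenrodPowerOps} and \cref{thm:pc}. After that you only need \cref{prop:resgfpHBPRn}, multiplicativity of $\overline{\Phi}$, and $\overline{\Phi}\circ\bar\sigma=\sigma_\mu\circ\overline{\Phi}$. These give:
\begin{itemize}
\item $\overline{\Phi}(\bar\xi_j^2)=\bar\zeta_j^2$ and $\overline{\Phi}(\bar\xi_j\bar\sigma\bar\xi_j)=h_j(h_j+\mu h_j)=\bar\zeta_j^2+\bar\zeta_j'$ for $j\le n+1$;
\item $\overline{\Phi}(\bar\xi_{n+i})=\bar\zeta_{n+i}$ for $i\ge 2$, where these $\bar\xi_{n+i}$ are the permanent cycles $Q^{(2^{n+i-1}-1)\rho+\sigma}\bar\tau_{n+i-1}$;
\item $\overline{\Phi}(\bar y)=z$;
\item the class from \cref{thm:pc}(b), $Q^{2^{n+1}\rho}\bar\tau_{n+1}+\bar\tau_{n+1}\bar x_n=\bar\tau_{n+2}+\bar\tau_{n+1}\bar\sigma\bar\tau_{n+1}$, maps to $b_n^2+b_n(b_n+\mu b_n)=b_n'$.
\end{itemize}
So every algebra generator of $E^4$ is the image of a permanent cycle, and multiplicativity gives collapse at $E^4$, exactly as in the paper.
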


\subsection{Organization}

In \cref{Sec:Background}, we recall Bruner and Rognes' work in the classical setting and some background on Real trace methods. In \cref{Sec:SS}, we construct the homological parametrized homotopy fixed point spectral sequence and a related spectral sequence for geometric fixed points. In \cref{Sec:Diff}, we analyze the differentials in the spectral sequence, giving a useful formula in the case that our input is Real topological Hochschild homology. In \cref{Sec:Mult} we discuss multiplicative structure. In \cref{Sec:Inf}, we produce families of permanent cycles in the spectral sequence provided that our input is a $C_2$-$\mathbb{E}_\infty$-algebra. In \cref{Sec:Compute}, we apply our results to compute the continuous mod two homology of the Real topological negative cyclic homology of Real cobordism and Real Brown--Peterson homology.

\subsection{Conventions}

We adopt the following notation:
\begin{enumerate}
\item the cyclic group of order two is denoted $C_2$, and the circle group is denoted $\t$;
\item the Eilenberg--MacLane $C_2$-spectrum associated to the constant Mackey functor on $\mft$ is denoted $H$;
\item we write $H_\star = H_\star(pt;\mft) \cong \pi^{C_2}_\star H\mft$ for the $RO(C_2)$-graded Bredon homology of a point, and $H_\ast = H_\ast(pt;\mathbb{F}_2)\cong \pi_\ast H\mathbb{F}_2$ for the nonequivariant homology of a point;
\item the representation ring of real orthogonal representations of $C_2$ is denoted $RO(C_2)$;
\item the trivial representation of $C_2$ is denoted $1$, the sign representation is denoted $\sigma$, and the regular representation is denoted $\rho$;
\item given a $C_2$-representation $V$, we write $S(V)$ for the unit sphere in $V$ and $S^V$ for its one-point compactification.
\end{enumerate}

\subsection{Acknowledgements} 

We thank the organizers of the Collaborative Research Workshop on K-theory and Scissors Congruence, the Vanderbilt University Mathematics Department, the University of Pennsylvania Mathematics Department, and the NSF for supporting this workshop as part of the focused research collaboration grant (FRG) DMS-2052977. The authors thank Gabriel Angelini-Knoll and Mike Hill for helpful discussions. MC was partially supported by NSF grants DMS-2405030, DMS-2052846, and DMS-2104348. TMG was supported by NSF grants DMS-2404932, DMS-2104233, and DMS-2052042. JDQ was supported by NSF grants DMS-2441241 and DMS-2414922.

\section{Background}\label{Sec:Background}
\subsection{Summary of the work of Bruner and Rognes}
In \cite{BR05}, Bruner and Rognes pioneered the use of \emph{homological trace methods} to understand $\TC^-(R)$ and $\TP(R)$, especially in cases where $\pi_*\THH(R)$ is not well-understood but $H_*(\THH(R);\mathbb{F}_p)$ is. Following their lead, we will focus on $\TC^-(R)$ in this summary. 

Let $X$ be a spectrum with $\t$-action.
There are equivalences
\[   X^{h\t} \simeq F(E\t_+, X)^{\t} \simeq F(S(\c^\infty)_+, X)^{\t}, \]
where $\t$ acts on $\c$ by rotation. Applied to $X = \THH(R)$, this gives
\[ \TC^-(R) \coloneq \THH(R)^{h\t} \simeq F(S(\c^\infty)_+, \THH(R))^{\t}. \]
The $\t$-equivariant CW-filtration of $S(\c^\infty)$ by $S(\c^r)$, $r \geq 0$, gives rise to a filtration of $X^{h\t}$; applying mod $p$ homology gives rise to a spectral sequence
\[ E^2_{s,t} \cong H^{-s}_{gp}(\mathbb{T}; H_t(X;\mathbb{F}_p)) \Rightarrow H_{s+t}^c(X^{h\mathbb{T}};\mathbb{F}_p) \]
starting with the group cohomology of $\t$ with coefficients in the homology of $X$ converging to the \emph{continuous} homology of $X^{h\mathbb{T}}$, 
\[  H_\ast^c(X^{h\t};\mathbb{F}_p) \coloneq \lim_r H_\ast\bigl( F(S(\c^r)_+,X)^{\t};\mathbb{F}_p\bigr).\] We call this the \emph{homological homotopy fixed point spectral sequence} for 
$X$ and denote it by $\textrm{HHFPSS}(X)$.

When $X = \THH(R)$, this spectral sequence gives a way of passing from $H_*(\THH(R);\mathbb{F}_p)$ to $H^c_*\TC^-(R;\mathbb{F}_p)$. We note that $H_\ast(\THH(R);\mathbb{F}_p)$ is often accessible using the B\"okstedt spectral sequence, 
allowing one to compute the $E^2$-page for $\textrm{HHFPSS}(\THH(R))$. 

One of the key takeaways from the work of Bruner and Rognes is that we also have very good control of the differentials in this spectral sequence. 
Their first main theorem, \cite[Thm. 1.1]{BR05}, provides several important properties of this spectral sequence. In particular, they obtain an explicit description of the $E_2$-term, explain how differentials in the spectral sequence interact with Dyer--Lashof operations, and describe certain consequences of multiplicative structure. 

Using these properties, Bruner and Rognes in \cite[Thm. 1.2]{BR05} go on to identify certain infinite cycles in the spectral sequence. These infinite cycles give rise to many interesting classes in $H^c_*(\TC^-(R);\mathbb{F}_p)$, and allow Bruner and Rognes to determine the $E^\infty$-pages for $R = BP\langle n \rangle, ko, tmf, \mathrm{MU}$, and $\mathrm{BP}$.

\subsection{$C_2$-spectra with twisted $\t$-action}
As the Hochschild-type invariants appearing in Real trace methods come naturally equipped with a $C_{2}$-twisted $\TT$-action, we must appeal to parameterized homotopy theory; specifically, the parameterized homotopy fixed point and Tate constructions. 
In this section, we recall some of the basic theory developed by the fifth author and Shah in \cite{QS25}. We begin by recalling the basics of $C_2$-\categories.

Recall that for any (topological) group $G$, the \textit{orbit category} $\mathcal{O}_G$ is the full subcategory of $\Top^{BG}$ spanned by objects equivalent to $G/H$ for $H$ a closed subgroup of $G$.

\begin{definition}\label[definition]{C2catspcdef}
A \emph{$C_{2}$-\category} is a coCartesian fibration $\mathcal{C}\rightarrow \mathcal{O}^{\op}_{C_{2}}$. 
A \emph{$C_{2}$-functor} between $C_{2}$-\categories $p\colon \mathcal{C} \rightarrow \mathcal{O}^{\op}_{C_{2}}$ and $q\colon \mathcal{D}\rightarrow \mathcal{O}^{\op}_{C_{2}}$ is a commutative diagram 
\[\begin{tikzcd}
	{\mathcal{C}} && {\mathcal{D}} \\
	& {\mathcal{O}_{C_{2}}^{\op}}
	\arrow["F", from=1-1, to=1-3]
	\arrow["p"', from=1-1, to=2-2]
	\arrow["q", from=1-3, to=2-2]
\end{tikzcd}\]
where $F$ sends $p$-coCartesian arrows to $q$-coCartesian arrows. 
We denote the \category of $C_{2}$-functors by $\Fun_{C_{2}}(\mathcal{C},\mathcal{D})$. 
\end{definition}

\begin{example}
	Given a pair of $C_2$-\categories, $\mathcal{C},\mathcal{D}$, there is a $C_2$-\category 
	$\underline{\Fun}(\mathcal{C},\mathcal{D})$ with fiber over the fixed orbit given by $\Fun_{C_2}(\mathcal{C},\mathcal{D})$. 
\end{example}

\begin{example}
	There are $C_2$-\categories of $C_2$-spaces, $\underline{\Top}^{C_2}$, and $C_2$-spectra, $\underline{\Sp}^{C_2}$, which recover 
	the \categories of $C_2$-spaces, $\Top^{C_2}$, and $C_2$-spectra, $\Sp^{C_2}$, as the respective fibers over the fixed orbit.
	The fibers over the free orbit are the \categories $\Top$ and $\Sp$, respectively.  
\end{example}

\begin{example}\label[example]{example: finite pointed C_2 sets}
	Let $\Fin_{C_2}$ denote the category of finite $C_2$-sets. There is a $C_2$-\category 
	\[
	\underline{\Fin}_{C_2,\ast} \rightarrow \mathcal{O}^{\op}_{C_{2}}
	\]
	whose fiber over $C_{2}/e$ is $(\Fin_{C_2})_{C_{2}//C_{2}}$ and whose fiber over $C_{2}/C_{2}$ is $(\Fin_{C_2})_{\ast//\ast}$; see \cite[Section 2.1]{nardin2022parametrizedequivarianthigheralgebra} for further details. 
\end{example}

Let $\t \subseteq \c^\times$ be the circle group equipped with $C_2$-action by complex conjugation, which furnishes an extension of groups 
$$1 \to \t \to O(2) \to C_2 \to 1.$$
We fix a section of $O(2)\to C_2$ with generator $\sigma\in O(2)$. The Weyl group of $C_{2} = \langle \sigma \rangle \subseteq O(2)$ is given by $\mu_{2} \subseteq \t$, the cyclic subgroup of order two inside $\t$. 

\begin{definition}
	Let $B_{C_2}\t \subseteq \mathcal{O}_{O(2)}^{\op}$ denote the full subcategory spanned by the $O(2)$-orbits which are $\t$-free.
	We view $B_{C_2}\t$ as a $C_2$-\category via the composition 
	\[\rho_{\t}\colon B_{C_2}\t \hookrightarrow \mathcal{O}_{O(2)}^{\op} \xrightarrow{(-)/\t} \mathcal{O}_{C_2}^{\op}.\]
	It follows from \cite[Lemma 3.16]{QS25} that $\rho_\t$ is a left fibration. 
\end{definition}

\begin{remark}
The reader may benefit from the following visual representation of the left fibration $\rho_{\t} \colon B_{C_{2}}\t \rightarrow \mathcal{O}_{C_{2}}^{\op}$, where the labelling of the arrows corresponds to the space of maps between the indicated objects:
\[\begin{tikzcd}
	{O(2)/e} && {O(2)/C_{2}} \\
	{C_{2}/e} && {C_{2}/C_{2}}
	\arrow["{O(2)}", from=1-1, to=1-1, loop, in=100, out=170, distance=10mm]
	\arrow[""{name=0, anchor=center, inner sep=0}, "{O(2)/C_{2}}", tail reversed, no head, from=1-1, to=1-3]
	\arrow["\mu_2", from=1-3, to=1-3, loop, in=10, out=80, distance=10mm]
	\arrow["{C_{2}}", from=2-1, to=2-1, loop, in=190, out=260, distance=10mm]
	\arrow[""{name=1, anchor=center, inner sep=0}, "\ast" swap, tail reversed, no head, from=2-1, to=2-3]
	\arrow["\ast", tail reversed, no head, from=2-3, to=2-3, loop, in=280, out=350, distance=10mm]
	\arrow["{\rho_{\t}}", between={0.2}{0.8}, squiggly, from=0, to=1]
\end{tikzcd}\]
\end{remark}

\begin{definition}
A \emph{${C_2}$-spectrum with twisted $\t$-action} is a ${C_2}$-functor $X \colon B_{C_2} \t \to \Sp^{C_2}$ from $B_{C_2}\t$ to the ${C_2}$-category of genuine ${C_2}$-spectra.
The \emph{category of $C_2$-spectra with twisted $\t$-action} is the $C_2$-functor category \[\Sp^{h_{C_2}\t} \coloneq \Fun_{C_2}(B_{C_2}\t,\underline{\Sp}^{C_2}).\]
\end{definition}

\begin{remark}
Informally, a $C_{2}$-spectrum with twisted $\t$-action is an $O(2)$-spectrum which is genuine with respect to the subgroup $C_{2}$. 
Restricting along the evident inclusion $BO(2)\subset B_{C_2}\t$ provides a functor $\Sp^{h_{C_2}\t}\to \Sp^{BO(2)}$, which one can think of as a $C_2$-equivariant Borel completion. 
Furthermore, by recollement theory, one can recover the original twisted $C_2$-spectrum from its Borel completion via a pullback diagram: 
\[
\begin{tikzcd}
\Sp^{h_{C_{2}}\t} \arrow[r] \arrow[d] & \Fun([1],\Sp^{B\mu_{2}}) \arrow[d,"\mathrm{ev}_{1}"] \\
\Sp^{BO(2)} \arrow[r,"(-)^{tC_{2}}"'] & \Sp^{B\mu_{2}} 
\end{tikzcd}
\]
Thus, the data of a $C_{2}$-spectrum with twisted $\t$-action consists of a pair $(X,Y \rightarrow X^{tC_{2}})$  where $X$ is a spectrum with $O(2)$-action, $Y$ is a spectrum with $\mu_{2}$-action, and $Y \rightarrow X^{tC_{2}}$ is a  $\mu_{2}$-equivariant map. 
\end{remark}

\begin{definition}
Given a ${C_2}$-spectrum with twisted $\t$-action, $X\in\Sp^{h_{C_2}\t}$, we define the \emph{parametrized homotopy orbits} and \emph{parametrized homotopy fixed points} 
as the $C_2$-colimit and $C_2$-limit
\[X_{h_{C_2}\t} \coloneq \colim_{B_{C_2}\t} X, \quad X^{h_{C_2}\t} \coloneq \lim_{B_{C_2}\t} X,\] which are objects in $\Sp^{C_2}$. 
The \emph{parametrized Tate construction} is then defined as the cofiber \[X^{t_{C_2}\t} \coloneq \cofib(\mathrm{Nm}\colon \Sigma^{\sigma} X_{h_{C_2}\t} \to X^{h_{C_2}\t}),\]
where $\mathrm{Nm}$ is the norm constructed in \cite[Theorem D]{QS25}.
\end{definition}

\begin{remark} 
We can obtain geometric models of these constructions following \cite[Remark 5.51]{QS25}. 
Explicitly, we have equivalences of ${C_2}$-spectra
$$X_{h_{C_2}\t} \simeq (E_{C_2}\t_+ \wedge X)^\t, \quad X^{h_{C_2}\t} \simeq F(E_{C_2}\t_+, X)^\t, \quad X^{t_{C_2}\t} \simeq (\widetilde{E_{C_2}\t} \wedge F(E_{C_2}\t_+,X))^\t,$$
where $E_{C_2}\t$ is the universal ${C_2}$-space for ${C_2}$-equivariant principal $\t$-bundles, and $\widetilde{E_{C_2}\t}$ is the cofiber of 
the collapse map $E_{C_2}\t_+\to S^0$. 
\end{remark}

\subsection{Real trace invariants} As in the nonequivariant case, Hesselholt and Madsen's Real algebraic K-theory \cite{HM13} is approximated by a series of Real trace invariants, each built from structure possessed by Real topological Hochschild homology, whose definition we now recall.

\begin{definition}[{\cite[Definition~5.2]{QS21b}}]
Let $A$ be a ${C_2}$-$\mathbb{E}_\infty$-ring. The \emph{Real topological Hochschild homology} of $A$, \[\THR(A) \colon = S^\sigma \otimes A,\] is the $C_2$-tensor of $A$ with the $C_2$-space $S^\sigma$ in the $C_2$-$\infty$-category of $C_2$-$\mathbb{E}_\infty$-rings in $\underline{\Sp}^{C_2}$. 
\end{definition}
The circle action on $S^\sigma$ gives $\THR(A)$ the structure of a ${C_2}$-spectrum with twisted $\t$-action. 
From a suitable cell structure on $S^\sigma$, we can deduce that there is an equivalence \cite[Remark~5.3]{QS21b} 
\[\THR(A)\simeq A\underset{N_e^{C_2}i^\ast_e A}{\wedge}A,\] where we view $A$ as a module over the norm via the counit map $\varepsilon\colon N_e^{C_2}i^\ast_e A\to A$ of the norm-restriction adjunction (see also \cite{DMPR17}). 
More generally, if $A$ is an $\mathbb{E}_\sigma$-ring, then $A$ can be given a left and right module structure over the norm, and one can take this smash product 
as the definition of Real topological Hochschild homology. 

From this definition we can understand the geometric fixed points of Real topological Hochschild homology together with the residual $\mu_2$-action \cite[Theorem 2.26]{DMPR17}. 

\begin{proposition}\label[proposition]{cor:residualmugfp}
There is a $\mu_2$-equivariant equivalence \[\THR(A)^{\Phi C_2}\simeq A^{\Phi C_2} \underset{i^\ast_e A}{\wedge} A^{\Phi C_2}\] where $\mu_2$ acts on $A^{\Phi C_2}\underset{i^\ast_e A}{\wedge} A^{\Phi C_2}$ by swapping the two factors. 
\end{proposition}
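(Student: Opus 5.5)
The plan is to start from the relative smash product description $\THR(A)\simeq A\wedge_{N_e^{C_2}i_e^\ast A}A$ and apply the geometric fixed points functor $\Phi^{C_2}$. This functor is strong symmetric monoidal and commutes with colimits, so it commutes with relative smash products, which are geometric realizations of two-sided bar constructions:
\[
\THR(A)^{\Phi C_2}\simeq \Phi^{C_2}\bigl(B(A,N_e^{C_2}i_e^\ast A,A)\bigr)\simeq B\bigl(A^{\Phi C_2},(N_e^{C_2}i_e^\ast A)^{\Phi C_2},A^{\Phi C_2}\bigr).
\]

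Next I would use the standard identification of the geometric fixed points of the Hill--Hopkins--Ravenel norm, $(N_e^{C_2}X)^{\Phi C_2}\simeq X$. This equivalence is natural and monoidal, so it is an equivalence of $\mathbb{E}_\infty$-rings when $X=i_e^\ast A$. The module structure of $A^{\Phi C_2}$ over $i_e^\ast A$ is obtained by applying $\Phi^{C_2}$ to the counit $\varepsilon\colon N_e^{C_2}i_e^\ast A\to A$. Under the identification, this map becomes the Tate-diagonal-type map $i_e^\ast A\to A^{\Phi C_2}$. The bar construction is then $A^{\Phi C_2}\wedge_{i_e^\ast A}A^{\Phi C_2}$, which gives the underlying equivalence.

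For the $\mu_2$-equivariance, I would model $\THR(A)=S^\sigma\otimes A$ using the $O(2)$-CW structure on $S^\sigma$. Take the two fixed points $\pm1$ as $0$-cells and the two arcs, swapped by conjugation, as a free $C_2$-cell $C_2\times D^1$. This presents $S^\sigma\otimes A$ as the pushout of $C_2$-$\mathbb{E}_\infty$-rings
\[
A\otimes A\leftarrow (C_2\times S^0)\otimes A \to (C_2\times D^1)\otimes A,
\]
that is, $A\wedge A\leftarrow N_e^{C_2}i_e^\ast A\wedge N_e^{C_2}i_e^\ast A\to N_e^{C_2}i_e^\ast A$. The action of $-1\in\mu_2\subset\t$ on $S^\sigma$ is rotation by $\pi$. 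It commutes with $\sigma$, swaps the two fixed points $\pm1$, and maps the cell structure to itself. The induced automorphism of the pushout diagram therefore swaps the two copies of $A$ and acts on the norm-type piece by an automorphism commuting with the $C_2$-structure. After applying $\Phi^{C_2}$, the induced $\mu_2$-action on $A^{\Phi C_2}\wedge_{i_e^\ast A}A^{\Phi C_2}$ swaps the two factors.

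The main obstacle is the middle piece of that last step. I need to check that the rotation acts on $(N_e^{C_2}i_e^\ast A)^{\Phi C_2}\simeq i_e^\ast A$ compatibly with swapping the factors, so that the swap is an automorphism of the bar construction and not something twisted by an extra automorphism of $i_e^\ast A$. I would first try to show that this action is homotopic to the identity, using that the rotation by $\pi$ restricted to the free arc cell is isotopic, within $C_2$-equivariant maps, to the map preserving the cell. Failing that, I would fall back on citing the $O(2)$-equivariant dihedral bar construction model of \cite[Theorem 2.26]{DMPR17}. In that model the residual Weyl action is visibly the reflection of the simplicial structure, which exchanges the two ends.
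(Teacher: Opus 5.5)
Your non-equivariant identification is fine. The gap is in the step you flag as the main obstacle: the automorphism of the middle term is \emph{not} homotopic to the identity, and the isotopy you hope for does not exist.

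In your cell structure, rotation by $\pi$ acts on the free cell by $(g,t)\mapsto(\sigma g,1-t)$, so it exchanges the two arcs. The $C_2$-equivariant self-maps of $C_2\times D^1$ form two components, label-preserving and label-swapping. Correspondingly, $z\mapsto -z$ and the arc-preserving reflection $z\mapsto-\bar z$ are different $C_2$-homotopy classes of self-maps of $S^\sigma$. The upper arc goes to a path from $-1$ to $1$ through $-i$, respectively through $i$, and a $C_2$-homotopy must fix $\pm1$.

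On $\Phi^{C_2}$, the label swap of $C_2/e\otimes A=N_e^{C_2}i_e^\ast A$ induces the underlying involution of $i_e^\ast A$, not the identity. It differs from the action of $\sigma$, which is trivial on geometric fixed points, by $N_e^{C_2}$ of that involution.

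This genuinely changes the answer. For $A=N_e^{C_2}B$ one gets $\THR(A)^{\Phi C_2}\simeq S^1\otimes B=\THH(B)$, with middle ring $B\wedge B$ carrying the swap, and the Weyl action is rotation by $\pi$ on $S^1$. The involution ``swap the outer factors, identity in the middle'' is instead induced by $z\mapsto -\bar z$. Non-equivariantly that map is homotopic to complex conjugation, which acts by $-1$ on $\pi_1\THH(H\mathbb{Q}[x])\cong\mathbb{Q}[x]\,dx$. Rotation by $\pi$ acts trivially there. So proving that the middle action is the identity would establish a false statement.

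Once the middle term is identified correctly, your cell computation shows the following: $\mu_2$ exchanges the outer copies of $A^{\Phi C_2}$ and applies the involution of $i_e^\ast A$ to the middle terms of the bar construction. This is well defined because $\Phi^{C_2}(\varepsilon)$ is invariant under that involution, since the collapse $C_2/e\to C_2/C_2$ is Weyl-invariant. It is also the precise meaning of ``swapping the factors'' in \cite{DMPR17}, where the reflection of the dihedral bar construction reverses the simplicial order and applies the involution of the ring.

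The paper gives no argument of its own and simply cites \cite[Theorem~2.26]{DMPR17}, which is your fallback. With the corrected middle term, your pushout argument would give a self-contained alternative. The distinction is invisible in the paper's later uses: they only need that the two units are exchanged, and they work in mod $2$ homology, where the involution of $i_e^\ast A$ acts trivially for $\MUR$ and $\BPR\langle n\rangle$.
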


We can use the twisted $\t$-action to define \emph{Real topological negative cyclic homology} and \emph{Real topological periodic cyclic homology},
$$\TCR^-(A) \colon = \THR(A)^{h_{C_2}\t}, \quad \TPR(A) \colon = \THR(A)^{t_{C_2}\t}.$$

A discussion of Real topological cyclic homology is not necessary for objectives in this paper, but see \cite[Definition~2.10]{QS21b} for a definition, where it is also shown that these invariants fit into a fiber sequence of ${C_2}$-spectra (after suitable completion)
$$\TCR(A) \to \TCR^-(A) \to \TPR(A),$$
so we may study $\TCR(A)$ by understanding $\TCR^-(A)$ and $\TPR(A)$. Thus, developing computational tools for $\TCR^-(A)$ and $\TPR(A)$ is key to understanding Real topological cyclic homology and Real algebraic $K$-theory. 
Inspired by Bruner--Rognes, we will focus on studying these trace invariants through $C_2$-equivariant homology.

\subsection{$C_2$-equivariant homology}
Let $H_\star = \pi_\star H\mft$ be the $C_2$-equivariant homotopy group of the Eilenberg--Maclane spectrum for the constant $C_2$-Mackey functor $\mft$. The $H$-homology of a fixed point was first computed in unpublished work by Stong. We refer the reader to \cite{HK01} for a detailed account. The answer is a square-zero extension of a polynomial algebra: 
\[H_\star \cong \F_2[u_\sigma,a_\sigma]\oplus \F_2\left\{\frac{\theta}{u_\sigma^k a_\sigma^\ell} \mid k,\ell\geq 0\right\},\]
where $|u_\sigma| = 1-\sigma, |a_\sigma| = -\sigma$, and $\theta$ is an infinitely $a_\sigma,u_\sigma$-divisible $a_\sigma,u_\sigma$-torsion class in degree $2\sigma-2$. 
The corresponding $C_2$-equivariant Steenrod algebra was first computed by Hu--Kriz in
\cite{HK01} (see also \cite{LSWX19} for a nice exposition). We have 
\[\mathcal{A}_\star^{C_2} = (H \mft)_\star (H\mft) \cong \frac{H_\star[\bar{\xi}_{i+1},\bar{\tau}_i \mid i\geq 0]}{\bar{\tau}_i^2 = u_\sigma\bar{\xi}_{i+1}+a_\sigma\bar{\tau}_{i+1}},\] where 
$|\bar{\xi}_i| = (2^i-1)\rho$ and $|\bar{\tau}_i| = (2^i-1)\rho+1$. The bars here indicate that we use the conjugates of the Hu--Kriz generators. 

The geometric fixed points of $H$ are 
\[(H\mft)^{\Phi C_2}\simeq (a_\sigma^{-1} H\mft)^{C_2}\] 
with $\pi_{\star}$ given by $H_{\star}[b]$, where $b$ is represented by $u_\sigma a_\sigma^{-1} \in a_\sigma^{-1}H_\star$.
Composition with the quotient $H{\mathbb{F}}_2[b]\to H{\mathbb{F}}_2$ then defines a \emph{(reduced) geometric fixed point} map 
\[\overline{\Phi}\colon H_V (X;\mft)\to H_{|V^{C_2}|} (X^{\Phi C_2};\mathbb{F}_2),\] 
which for any $X\in\Sp^{C_2}$ and $V\in RO(C_2)$ sends 
\[\overline{\Phi}\colon (S^V\xrightarrow{x}H\mft \wedge X)\mapsto (S^{V^{C_2}}\xrightarrow{\Phi^{C_2}(x)}H{\mathbb{F}}_2 [b]\wedge X^{\Phi C_2}\to H{\mathbb{F}}_2\wedge X^{\Phi C_2}).\]
The following lemma is very useful for computing homology of $C_2$-spectra and follows simply from the 
fact that equivalences are detected on underlying and geometric fixed points.

\begin{lemma}[{\cite[Lemma~2.8]{BehrensWilson}}]\label[lemma]{lemma:BWLemma}
Let $\{x_i\}\subset H_\star (X;\mft)$ be a collection of elements. If $\{\res(x_i)\}$ is an $\F_2$-basis for $H_\ast (i^\ast_e X; \mathbb{F}_2)$ and $\left\{\overline{\Phi}(x_i)\right\}$ is an $\F_2$-basis for $H_\ast (X^{\Phi C_2}; \mathbb{F}_2)$, then $\{x_i\}$ is an $H_\star$-basis for $H_\star (X; \mft)$. 
\end{lemma}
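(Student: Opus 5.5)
The strategy is to build a single map $f\colon \bigvee_i \Sigma^{V_i} H \to H \wedge X$ out of the chosen classes and check that it is an equivalence using the two standard detection functors. Write $V_i$ for the degree of $x_i$. Each $x_i$ is a map $S^{V_i} \to H\wedge X$, where $H = H\mft$. Since $H\wedge X$ is an $H$-module, $x_i$ extends to an $H$-module map $\Sigma^{V_i} H \to H \wedge X$. Summing these gives
\[ f\colon M:=\bigvee_i \Sigma^{V_i} H \longrightarrow H\wedge X .\]
On $\pi^{C_2}_\star$, the source is the free $H_\star$-module on generators $e_i$, and $f$ sends $e_i\mapsto x_i$. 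So $\{x_i\}$ is an $H_\star$-basis exactly when $f_\star$ is an isomorphism, and it suffices to show that $f$ is an equivalence of $C_2$-spectra. A map of $C_2$-spectra is an equivalence if and only if both $i^\ast_e f$ and $\Phi^{C_2} f$ are equivalences, so we check these two in turn.

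\emph{Underlying.} We have $i^\ast_e M \simeq \bigvee_i \Sigma^{|V_i|} H\mathbb{F}_2$ and $i^\ast_e(H\wedge X)\simeq H\mathbb{F}_2\wedge i^\ast_e X$. The map $i^\ast_e f$ is the $H\mathbb{F}_2$-module map sending the generators to $\res(x_i)$. Since the $\res(x_i)$ form an $\mathbb{F}_2$-basis of $H_\ast(i^\ast_e X;\mathbb{F}_2)$, $i^\ast_e f$ induces an isomorphism on homotopy and is therefore an equivalence.

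\emph{Geometric fixed points.} Using that $\Phi^{C_2}$ is symmetric monoidal, we get $\Phi^{C_2}M\simeq \bigvee_i \Sigma^{|V_i^{C_2}|} H^{\Phi C_2}$ and $\Phi^{C_2}(H\wedge X)\simeq H^{\Phi C_2}\wedge X^{\Phi C_2}$. Moreover $\Phi^{C_2} f$ is a map of $H^{\Phi C_2}$-modules, with $\pi_\ast H^{\Phi C_2}\cong \mathbb{F}_2[b]$ after passing to integer-graded homotopy, where $b$ has degree $1$. The homotopy of the target is $\mathbb{F}_2[b]\otimes H_\ast(X^{\Phi C_2};\mathbb{F}_2)$: the ring map $H\mathbb{F}_2\to H^{\Phi C_2}$ splits $H^{\Phi C_2}\simeq \bigvee_{n\ge 0}\Sigma^n H\mathbb{F}_2$ as $H\mathbb{F}_2$-modules. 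Reducing $\Phi^{C_2}f$ modulo $b$, i.e.\ composing with $H\mathbb{F}_2[b]\to H\mathbb{F}_2$, sends the generators to $\overline{\Phi}(x_i)$, which form a basis by hypothesis.

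To lift this mod-$b$ statement to an integral one, we use that both sides are $b$-complete, and in fact bounded-below-graded free $\mathbb{F}_2[b]$-modules. A map of free $\mathbb{F}_2[b]$-modules that is an isomorphism after applying $-\otimes_{\mathbb{F}_2[b]}\mathbb{F}_2$ is an isomorphism, by a graded Nakayama argument. For this we need connectivity bounds, or an argument that works degreewise. Alternatively, we can work at the spectrum level: smashing over $H^{\Phi C_2}$ with $H\mathbb{F}_2$ detects equivalences between modules that are sums of suspensions of $H^{\Phi C_2}$, since the cofiber $C$ of $\Phi^{C_2}f$ satisfies $C/b\simeq 0$, so $b$ acts invertibly on $\pi_\ast C$. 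Because $b$ has positive degree, a degreewise finiteness or boundedness argument then forces $C\simeq 0$.

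The main obstacle is exactly this last point. It requires either that the degrees $|V_i^{C_2}|$ be bounded below (true in the applications) or a cleaner identification. The cleanest route is to note that $H^{\Phi C_2}$ is an $H\mathbb{F}_2$-algebra, so $\Phi^{C_2}f$ is the base change along $H\mathbb{F}_2\to H^{\Phi C_2}$ of the $H\mathbb{F}_2$-module map $\bigvee\Sigma^{|V_i^{C_2}|}H\mathbb{F}_2\to H\mathbb{F}_2\wedge X^{\Phi C_2}$ given by the $\overline{\Phi}(x_i)$. This requires checking that $\overline{\Phi}$ is compatible with this splitting. If so, $\Phi^{C_2}f$ is a base change of an equivalence, hence an equivalence. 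Both checks then hold, so $f$ is an equivalence and the lemma follows.
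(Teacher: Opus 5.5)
Your strategy is the paper's: package the $x_i$ into an $H$-module map $f\colon\bigvee_i\Sigma^{V_i}H\to H\wedge X$ and test it on $i_e^\ast$ and $\Phi^{C_2}$. The paper's proof is exactly the one-line remark that equivalences are detected on underlying and geometric fixed points, and your underlying step is fine.

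The trouble is passing from ``isomorphism mod $b$'' to ``isomorphism'' on geometric fixed points, and your ``cleanest route'' does not do it. The map $\Phi^{C_2}f$ is \emph{not} the base change of the $H\F_2$-module map given by the $\overline{\Phi}(x_i)$. Under $\pi_\ast(\Phi^{C_2}H\wedge X^{\Phi C_2})\cong\F_2[b]\otimes H_\ast(X^{\Phi C_2};\F_2)$, the $i$-th generator goes to $\Phi(x_i)$. This generally has components $b^n\otimes y$ with $n\geq 1$, since $\Phi(u_\sigma)=b$ while $\Phi(a_\sigma)=1$, so the two maps agree only modulo $b$.

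Moreover, no argument can avoid a lower bound on degrees, and degreewise finiteness does not help: the statement is false without it. Take $X=\bigvee_{k\in\zz}\Sigma^k S$ with trivial action, so that $i_e^\ast X$ and $X^{\Phi C_2}$ are both $\bigvee_k S^k$ with fundamental classes $e_k$. Let $\iota_k\in H_k(X;\mft)$ be the summand classes, so $\res(\iota_k)=\overline{\Phi}(\iota_k)=e_k$, and put $x_k=a_\sigma\iota_k+u_\sigma\iota_{k-1}\in H_{k-\sigma}(X;\mft)$. Then $\res(x_k)=e_{k-1}$ and $\overline{\Phi}(x_k)=e_k$, so both families are $\F_2$-bases. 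But any $H_\star$-combination of the $x_k$ in degree $0$ has coefficients in $H_{-k+\sigma}=0$, so $\iota_0$ is not in their span. On geometric fixed points this is the map $\epsilon_k\mapsto e_k+be_{k-1}$ of free $\F_2[b]$-modules. It is an isomorphism mod $b$, but hitting $e_0$ would require the infinite sum $\sum_{m\geq 0}b^m\epsilon_{-m}$.

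So the lemma, and the paper's one-line proof, needs the hypothesis that $H_\ast(X^{\Phi C_2};\F_2)$ is bounded below; equivalently, that the degrees $|V_i^{C_2}|$ are bounded below. Under that hypothesis your first argument is complete. The cofiber $C$ of $\Phi^{C_2}f$ satisfies $C/b\simeq 0$, so $b\colon\pi_nC\to\pi_{n+1}C$ is an isomorphism for all $n$. Since $C$ is bounded below, this forces $C\simeq 0$.

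This covers every use of the lemma in the paper, since the spectra $\THR(A)^{\Phi C_2}$ there are connective. You should delete the base-change paragraph and the appeal to degreewise finiteness, and state the boundedness assumption explicitly rather than as one of two alternatives.
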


If $R$ is $C_2$-$\mathbb{E}_\infty$-ring, then we have additional structure on $H_\star (R; \mft)$ that will be relevant. We have the \emph{homology norm map} 
\[n\colon H _t ( i^\ast_e R;\mathbb{F}_2) \to H_{t\rho} (R;\mft)\] 
defined as follows.

Under the identification $i_e^\ast H\mft\simeq H\mathbb{F}_2$, we represent a class $x\in H_t(i_e^\ast R;\mathbb{F}_2)$ by a map $x\colon S^t\to i_e^\ast H\mft \wedge i_e^\ast R$, and set
\[
\bigl(S^t \xrightarrow{x}     i_e^\ast H\mft \wedge i_e^\ast R\bigr)   \longmapsto   \bigl(S^{t\rho} \xrightarrow{N(x)} N(i_e^\ast H\mft )\wedge N(i_e^\ast R)     \xrightarrow{\varepsilon_H\wedge\varepsilon_R}     H\mft \wedge R\bigr),
\]
where $\varepsilon_H\colon N(i_e^\ast H\mft ) \to H\mft $ denotes the norm structure map of $H\mft $.
The following is then readily verified from the definitions and is useful for studying the homology of the geometric fixed points of $\THR$. 

\begin{proposition}\label[proposition]{prop:gfpmodulestructure}
Let $R$ be a $C_2$-$\mathbb{E}_\infty$-ring. Then the map induced on homology by $\Phi^{C_2}(\varepsilon_R)\colon i^\ast_e R\to R^{\Phi C_2}$ is given by 
\[\overline{\Phi}\circ n\colon  H_\ast (i^\ast_e R; \mathbb{F}_2)  \to H_\ast (R^{\Phi C_2} ; \mathbb{F}_2).\]  
\end{proposition}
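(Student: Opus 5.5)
The plan is to identify $\Phi^{C_2}(\varepsilon_R)$ as a composite of maps whose effect on homology is already computed by $n$ and by $\overline{\Phi}$, and then check that the two definitions agree. Start with a class $x \in H_t(i^\ast_e R;\mathbb{F}_2)$, represented by $x\colon S^t \to H\mathbb{F}_2 \wedge i^\ast_e R$. By definition, $n(x)$ is the composite
\[
S^{t\rho}\xrightarrow{N(x)} N(i^\ast_e H\mft)\wedge N(i^\ast_e R)\xrightarrow{\varepsilon_H\wedge\varepsilon_R} H\mft\wedge R .
\]
Next apply $\Phi^{C_2}$, which is symmetric monoidal, to this composite. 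Three inputs are needed:
\begin{enumerate}
\item[(i)] $(S^{t\rho})^{\Phi C_2}\simeq S^t$.
\item[(ii)] The Hill--Hopkins--Ravenel diagonal equivalence $\Phi^{C_2} N_e^{C_2}(Y)\simeq Y$, natural in $Y$.
\item[(iii)] Under (ii), $\Phi^{C_2}N(f)\simeq f$ for any map $f$ of underlying spectra.
\end{enumerate}
Applying these, $\Phi^{C_2}(n(x))$ becomes the composite
\[
S^t\xrightarrow{x} H\mathbb{F}_2\wedge i^\ast_e R\xrightarrow{\Phi^{C_2}(\varepsilon_H)\wedge\Phi^{C_2}(\varepsilon_R)} (H\mft)^{\Phi C_2}\wedge R^{\Phi C_2}.
\]

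Then compose with the quotient $(H\mft)^{\Phi C_2}\simeq H\mathbb{F}_2[b]\to H\mathbb{F}_2$, which by definition turns $\Phi^{C_2}(n(x))$ into $\overline{\Phi}(n(x))$. To finish, it suffices to show that the composite
\[
H\mathbb{F}_2\simeq \Phi^{C_2}N(i^\ast_e H\mft)\xrightarrow{\Phi^{C_2}\varepsilon_H} H\mathbb{F}_2[b]\to H\mathbb{F}_2
\]
is homotopic to the identity. Granting this, $\overline{\Phi}(n(x))=(\id\wedge\Phi^{C_2}\varepsilon_R)_\ast(x)$, which is exactly the image of $x$ under the map induced on homology by $\Phi^{C_2}(\varepsilon_R)$. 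That is the claim.

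This verification is the main obstacle. My plan is to use that $\Phi^{C_2}\varepsilon_H$ is a map of $\mathbb{E}_\infty$-rings, and hence so is the full composite $\psi\colon H\mathbb{F}_2\to H\mathbb{F}_2$. The space of $\mathbb{E}_\infty$-ring self-maps of $H\mathbb{F}_2$ is contractible, because $H\mathbb{F}_2$ is the $2$-truncation of a connective $\mathbb{E}_\infty$-ring. Therefore $\psi\simeq\id$.

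If one prefers to avoid this argument, a more hands-on route is available. It suffices to check the claim on $\pi_0$: a ring map $\mathbb{F}_2\to\mathbb{F}_2$ is the identity, and a ring endomorphism of $H\mathbb{F}_2$ is determined by its effect on $\pi_0$, since $[H\mathbb{F}_2,H\mathbb{F}_2]_0=\mathbb{F}_2$.
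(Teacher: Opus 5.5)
Your argument is correct. It is the unwinding of definitions that the paper leaves to the reader (the paper only says the proposition is ``readily verified from the definitions''), and its only real content is your check that the composite $H\mathbb{F}_2\simeq\Phi^{C_2}N(i^\ast_e H\mft)\to (H\mft)^{\Phi C_2}\to H\mathbb{F}_2$ is homotopic to $\id$.

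For that step your $\pi_0$ route is the cleaner one: it only needs unitality, since the only homotopy classes of self-maps of $H\mathbb{F}_2$ are $0$ and $\id$. In your first route, the ``$2$-truncation'' justification is garbled. The correct reason is that $H\mathbb{F}_2$ is discrete, so its $\mathbb{E}_\infty$ endomorphisms are the ring endomorphisms of $\mathbb{F}_2$. That route also requires the quotient $H\mathbb{F}_2[b]\to H\mathbb{F}_2$ to be taken as the $\mathbb{E}_\infty$ truncation map to $\tau_{\le 0}$.
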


There are also equivariant power operations on the homology of such $R$ \cite{Wil17,Wil19}
\[Q^{k\rho+\epsilon\sigma}\colon H_V (R;\mft)\to H_{V+k\rho+\epsilon\sigma}(R;\mft),\quad k \geq 0, \ \epsilon\in\{0,1\}.\]
These satisfy the usual properties of power operations, for example if $x\in H_V (R;\mft)$, for some $V$ of the form $k\rho+\epsilon\sigma$, then $Q^V x = x^2$.  
They also restrict to the Dyer--Lashof operations on underlying homology: $\res(Q^{k\rho+\epsilon\sigma}x) = Q^{2k+\epsilon}\res(x)$. 

\begin{thm}[{\cite[Corollary~1.6.4]{Wil19}}]\label{thm:SteenrodPowerOps}
The action of the equivariant power operations on $\mathcal{A}_\star^{C_2}$ satisfies the following 
\begin{align*}
Q^{2^k \rho}\bar{\tau}_k = \bar{\tau}_{k+1}, \quad
Q^{(2^k -1)\rho +\sigma}\bar{\tau}_k = \bar{\xi}_{k+1}, \quad
Q^{2^k \rho}\bar{\xi}_k = \bar{\xi}_{k+1}.
\end{align*}
\end{thm}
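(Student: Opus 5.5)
The plan is to check each identity after passing to the underlying nonequivariant dual Steenrod algebra and to the $a_\sigma$-localized (geometric fixed point) dual Steenrod algebra. Together these two invariants detect elements of $\mathcal{A}^{C_2}_\star$ in the relevant degrees. This is the detection principle behind \cref{lemma:BWLemma}, applied to a single element rather than a basis. Three inputs are needed. First, $\mathcal{A}^{C_2}_\star$ is a free $H_\star$-module on the monomials in $\bar\xi_{i}$ and $\bar\tau_i$ with each $\bar\tau_i$ appearing to exponent at most one. Second, the power operations $Q^{k\rho+\epsilon\sigma}$ commute with restriction, via $\res(Q^{k\rho+\epsilon\sigma}x)=Q^{2k+\epsilon}\res(x)$. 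Third, they are compatible with the geometric fixed point functor, which turns the $C_2$-$\mathbb{E}_\infty$ structure on $H\mft\wedge H\mft$ into $\mathbb{E}_\infty$ structure on $\Phi^{C_2}(H\mft\wedge H\mft)\simeq H\F_2[b]\wedge H\F_2[b]$.

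\textbf{Step 1 (underlying).} Recall $\res(\bar\tau_i)=\bar\zeta_{i+1}$ and $\res(\bar\xi_i)=\bar\zeta_i^2$, where $\bar\zeta_j$ are the conjugate Milnor generators of $\mathcal{A}_\ast$ at $p=2$. Steinberger's formula $Q^{2^j}\bar\zeta_j=\bar\zeta_{j+1}$ then gives $\res(Q^{2^k\rho}\bar\tau_k)=Q^{2^{k+1}}\bar\zeta_{k+1}=\bar\zeta_{k+2}=\res(\bar\tau_{k+1})$. Next, $\res(Q^{(2^k-1)\rho+\sigma}\bar\tau_k)=Q^{2^{k+1}-1}\bar\zeta_{k+1}=\bar\zeta_{k+1}^2=\res(\bar\xi_{k+1})$, because $Q^{|y|}y=y^2$. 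Finally, the Cartan formula gives $\res(Q^{2^k\rho}\bar\xi_k)=Q^{2^{k+1}}(\bar\zeta_k^2)=(Q^{2^k}\bar\zeta_k)^2=\bar\zeta_{k+1}^2$. So each difference $D$ (left side minus right side) has trivial restriction.

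\textbf{Step 2 (geometric fixed points).} Compute $\overline{\Phi}$, or better the full $a_\sigma$-localization, of the generators. Classically $\Phi(\bar\tau_i)$ and $\Phi(\bar\xi_i)$ are expressed through $\bar\zeta_i$ and $b$ in $\pi_\ast(H\F_2[b]\wedge H\F_2[b])$; for example $\bar\xi_i\mapsto \bar\zeta_i$ and $\bar\tau_i$ maps to $\bar\zeta_{i+1}$ plus $b$-multiples, up to conjugation conventions. The equivariant operations $Q^{k\rho+\epsilon\sigma}$ pass to the total Dyer--Lashof operations on this $\mathbb{E}_\infty$ ring, with $Q^{k\rho}$ going to $Q^{k}$ up to $b$-terms. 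Applying Steinberger's formula again, together with the Nishida/Cartan formulas for the $b$-contributions, should show that $a_\sigma^{-1}D=0$.

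\textbf{Step 3 (detection, main obstacle).} Write $D=\sum_m c_m\, m$ over the monomial basis, with $c_m\in H_\star$ of degree $|D|-|m|$. The hard part is that $H_\star$ has the $\theta$-torsion "negative cone", which is invisible to both $\res$ and $a_\sigma^{-1}$. The plan is to exclude these coefficients by a degree count. The degree $|D|$ is $2^{k+1}\rho-\rho+1$, $2^{k+1}\rho-\rho$, or $(2^{k+1}-1)\rho$. The monomials $m$ have degrees $a\rho+\epsilon$ with $a\ge 0$ and $0\le\epsilon\le a$. So $|D|-|m|$ lies in $\{n\rho+\delta\}$ with $n\ge 0$ and $\delta$ small. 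In those degrees the negative-cone classes $\theta/(u_\sigma^k a_\sigma^\ell)$, which lie in degree $2\sigma-2-k(1-\sigma)+\ell\sigma$, cannot occur, since the non-$\sigma$ coordinate would have to be at most $-2$. Hence every $c_m$ lies in $\F_2[u_\sigma,a_\sigma]$. On that part, the module is $a_\sigma$-torsion-free, so $a_\sigma^{-1}D=0$ forces $D$ to be $u_\sigma$-divisible; more precisely, $a_\sigma^{-1}D=0$ already gives $D=0$ in that summand. The vanishing restriction serves as a consistency check and handles any $a_\sigma^0$ terms. I expect the care to lie in making the Step 2 computation precise: tracking the $b$-terms and conjugations in $\Phi(\bar\tau_k)$ is where errors are most likely.
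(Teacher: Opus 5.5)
The paper gives no proof of this statement; it simply quotes \cite[Corollary~1.6.4]{Wil19}. Your Step~1 (the underlying check) is correct. However, the detection strategy has a genuine gap, and Step~2 is not an argument and rests on an incorrect model.

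\emph{Step 3: the degree count is wrong.} Let $m$ be a basis monomial with $\epsilon$ factors $\bar\tau_i$, so $|m| = a\rho+\epsilon$, and let $|D| = N\rho+\delta$. Then the coefficient $c_m$ lies in degree $(N-a)\rho+(\delta-\epsilon)$. Its integer coordinate is $N-a+\delta-\epsilon$. The shift $\delta-\epsilon$ is not small: it decreases with every $\bar\tau$ factor, so negative-cone coefficients do occur. Concretely,
\[ |\theta\,\bar\tau_0\bar\tau_1\bar\tau_2\bar\tau_3\bar\xi_1^2| = (2\sigma-2)+13\rho+4 = 15\rho = |\bar\xi_4|. \]
Since $\mathcal{A}^{C_2}_\star$ is $H_\star$-free on such monomials, this class is nonzero. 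Yet it has zero restriction (because $u_\sigma\theta=0$ and $\res(u_\sigma)$ is a unit), and it is killed by $a_\sigma$. So $\res$ and $a_\sigma^{-1}$ cannot distinguish $Q^{8\rho}\bar\xi_3$ or $Q^{7\rho+\sigma}\bar\tau_3$ from $\bar\xi_4$. Likewise $\theta\,\bar\tau_0\bar\tau_1\bar\tau_2\bar\tau_3\bar\tau_4\bar\xi_2$ lies in degree $31\rho+1=|\bar\tau_5|$. Your detection argument therefore only reaches $k\le 2$ for the two identities in degree $(2^{k+1}-1)\rho$, and $k\le 3$ for the first identity. Beyond that you need some structural input that controls $\theta$-multiples; no degree count can supply it.

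\emph{Step 2: the model for geometric fixed points is wrong.} The heuristic ``$Q^{k\rho}\mapsto Q^k$ up to $b$-terms'' is not how geometric fixed points interact with these operations.
\begin{enumerate}
\item You have the images of the generators wrong. In fact $\overline{\Phi}(\bar\tau_i)=b^{2^i}$ (the case $n=-1$ of \cref{prop:resgfpHBPRn}). It cannot be $\bar\zeta_{i+1}$ plus $b$-multiples for $i\ge 1$, for degree reasons.
\item More importantly, $\Phi^{C_2}\bar D_2(X)\simeq D_2(\Phi^{C_2}X)\vee(i_e^\ast X)_{hC_2}$, where the second summand comes from the twisted diagonal subgroup of $\mu_2\times C_2$. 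Hence $\overline{\Phi}(Q^{k\rho+\epsilon\sigma}x)$ depends on $\res(x)$ through the norm, not only on $\overline{\Phi}(x)$.
\end{enumerate}
The second identity shows this is unavoidable. The only nonequivariant operation that could contribute is $Q^{2^k-1}$, and it vanishes on the degree-$2^k$ class $b^{2^k}$ by instability. Nevertheless $\overline{\Phi}(\bar\xi_{k+1})=\bar\zeta_{k+1}\neq 0$. That class can only arise from $\res(\bar\tau_k)=\bar\zeta_{k+1}$ via the norm summand.

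To make your route work you would need two things: a correct formula for $\overline{\Phi}\circ Q^{k\rho+\epsilon\sigma}$, and a separate argument excluding $\theta$-multiples. The paper sidesteps both by citing Wilson.
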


\subsection{Parametrized internal homs}
Let $G$ be a finite group. The goal of this subsection is to show that the internal hom of a closed $G$-symmetric monoidal \category is canonically $G$-lax symmetric monoidal. This result is critical for our work, and, while certainly well-known, does not appear to be documented anywhere in the literature. 

Let $\underline{\rm Fin}_{G,\ast}$ denote the $G$-\category of finite pointed $G$-sets (compare with Example~\ref{example: finite pointed C_2 sets}).

\begin{definition} 
A \textit{$G$-symmetric monoidal \category} is a coCartesian fibration 
\[
\underline{\cal{C}}^{\otimes} \rightarrow \underline{\rm Fin}_{G,\ast}
\]
with the additional property that the canonical functor 
\[
\underline{\cal{C}}^{\otimes}_{U} \rightarrow \prod_{W \in \mathrm{Orbit}(U)} \underline{\cal{C}}^{\otimes}_{W} 
\]
is an equivalence of \categories, where $U \in \underline{\rm Fin}_{G,\ast}$ (we have slightly abused notation by omitting the orbit of $\cal{O}^{\op}_{G}$ over which $U$ lies). Furthermore, a \textit{$G$-symmetric monoidal (resp. lax symmetric monoidal) functor} is a functor of \categories 
\[
\underline{F}^{\otimes} \colon \underline{\cal{C}}^{\otimes} \rightarrow \underline{\cal{D}}^{\otimes}\]
over $\underline{\rm Fin}_{G,\ast}$ which sends coCartesian (resp. inert) arrows to coCartesian (resp. inert) arrows. 
\end{definition}

\begin{example}
For $\underline{\cal{C}}$ a $G$-symmetric monoidal \category, the monoidal product determines a $G$-symmetric monoidal functor of the form 
\[
\underline{\cal{C}}^{\otimes}\times_{\underline{\rm Fin}_{G,\ast}} \underline{\cal{C}}^{\otimes} \rightarrow \underline{\cal{C}}^{\otimes}. 
\]
\end{example}

\begin{definition}
Let $p:\underline{\cal{C}}^{\otimes} \rightarrow \underline{\rm Fin}_{G,\ast} $ be a $G$-symmetric monoidal \category. We define $\CAlg^{G}(\cal{C})$ to be the \category of sections of $p$ which carry inert morphisms to $p$-coCartesian morphisms. Additionally, we define $\coCAlg^{G}(\cal{C})$ to be the \category $\CAlg^{G}(\cal{C}^{\rm vop})^{\op}$. 
\end{definition}

By the duality between coCartesian and Cartesian fibrations we could have alternatively encoded a $G$-symmetric monoidal \category via a \textit{Cartesian fibration} 
\[
\underline{\cal{C}}_{\otimes} \rightarrow (\underline{\rm Fin}_{G,\ast})^{\op}
\]
which satisfies an essentially identical Segal condition. In this language, a $G$-symmetric monoidal (resp. lax symmetric monoidal) functor can be encoded as a functor between Cartesian fibrations which preserves the Cartesian (resp. inert) edges. On first blush, this might seem a categorical indulgence. However, this point of view is essential in what follows. To this end, let 
\[
\mu\colon \underline{\cal{C}}_{\otimes}\times_{(\underline{\rm Fin}_{G,\ast})^{\op}} \underline{\cal{C}}_{\otimes} \rightarrow \underline{\cal{C}}_{\otimes}
\]
denote the $G$-symmetric monoidal functor given by the $G$-symmetric monoidal structure on $\underline{\cal{C}}$. 

\begin{proposition}\label[proposition]{proposition: C-internal hom lax symmetric monoidal}
For $\underline{\cal{C}}$ a presentably $G$-symmetric monoidal \category, the $G$-internal hom
\[
\underline{\Hom}_{\cal{C}} \colon  \underline{\cal{C}}^{\op} \times \underline{\cal{C}} \rightarrow \underline{\cal{C}}
\]
admits a canonical $G$-lax symmetric monoidal structure. 
As a consequence, there is an induced functor 
\[
\coCAlg^{G}(\cal{C})^{\op} \times \CAlg^{G}(\cal{C}) \rightarrow \CAlg^{G}(\cal{C}).
\]
\end{proposition}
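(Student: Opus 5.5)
The plan is to adapt the standard non-parametrized argument (Lurie, HA 2.2.6 / the treatment of internal homs as lax monoidal via the "Day convolution/cartesian fibration" trick), which is why the text just emphasized the Cartesian-fibration model $\underline{\cal{C}}_{\otimes}\to(\underline{\rm Fin}_{G,\ast})^{\op}$. First, pass to the operadic opposite: $\underline{\cal{C}}^{\rm vop}$ (fiberwise opposite) is again $G$-symmetric monoidal, and its Cartesian model is obtained from the coCartesian model of $\underline{\cal{C}}$ by straightening/unstraightening. The product $\underline{\cal{C}}^{\op}\times\underline{\cal{C}}$ then carries a $G$-symmetric monoidal structure whose Cartesian model is $(\underline{\cal{C}}^{\rm vop})_{\otimes}\times_{(\underline{\rm Fin}_{G,\ast})^{\op}}\underline{\cal{C}}_{\otimes}$, with the Segal condition checked orbitwise.

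Next, I will consider the functor
\[
\mu\colon (\underline{\cal{C}}^{\rm vop})^{\otimes}\times_{\underline{\rm Fin}_{G,\ast}}\underline{\cal{C}}^{\otimes}\times_{\underline{\rm Fin}_{G,\ast}}\underline{\cal{C}}^{\otimes}\to \underline{\cal{C}}^{\otimes}
\]
or more precisely the correspondence/pairing $\underline{\Hom}_{\underline{\cal{C}}}(\mu(-,-),-)$ landing in $G$-spaces, and realize the internal hom as a relative right adjoint. Concretely: the tensor functor $\underline{\cal{C}}^{\otimes}\times_{\underline{\rm Fin}_{G,\ast}}\underline{\cal{C}}^{\otimes}\to\underline{\cal{C}}^{\otimes}$ is $G$-symmetric monoidal, and, for a fixed variable, its fiberwise right adjoint exists by presentability (the tensor is distributive, so $-\otimes X$ preserves $G$-colimits and the $G$-adjoint functor theorem of Nardin–Shah applies fiberwise and compatibly with restriction and norms). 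Then I invoke the parametrized analogue of Lurie's relative adjunction result (HA 7.3.2.7/ the statement that right adjoints of symmetric monoidal functors are lax monoidal, in its $G$-version from Nardin–Shah's parametrized higher algebra): a fiberwise right adjoint to a $G$-symmetric monoidal functor assembles to a relative right adjoint over $\underline{\rm Fin}_{G,\ast}$ which preserves inert morphisms, i.e., is $G$-lax symmetric monoidal. Applying this to the curried functor $\underline{\cal{C}}\to\underline{\Fun}(\underline{\cal{C}},\underline{\cal{C}})$ (or directly to the two-variable adjunction, treating $\underline{\cal{C}}^{\op}$ as a parameter) yields $\underline{\Hom}_{\cal{C}}$ with a $G$-lax symmetric monoidal structure. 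The main obstacle is exactly this two-variable step: contravariance in the first slot means $\underline{\Hom}$ is not a right adjoint of a functor between coCartesian fibrations, and this is where the Cartesian model is needed — I would encode the first variable via $(\underline{\cal{C}}^{\rm vop})$, check that the relevant edges (Cartesian lifts of inerts over $(\underline{\rm Fin}_{G,\ast})^{\op}$) are preserved, and verify the Segal/norm compatibility, namely that the lax structure maps $N^H_K\underline{\Hom}(X,Y)\to\underline{\Hom}(N^H_KX,N^H_KY)$ are the mates of the norm maps, which holds because norms are $G$-symmetric monoidal.

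Finally, the consequence is formal: a $G$-lax symmetric monoidal functor preserves inert edges, so composing sections carries $\CAlg^G(\cal{C}^{\rm vop}\times\cal{C})=\CAlg^G(\cal{C}^{\rm vop})\times\CAlg^G(\cal{C})=\coCAlg^G(\cal{C})^{\op}\times\CAlg^G(\cal{C})$ to $\CAlg^G(\cal{C})$.
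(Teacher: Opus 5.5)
Your outline has the same shape as the paper's proof: existence of $\underline{\Hom}_{\mathcal{C}}$ from a parametrized adjoint functor theorem, then a Cartesian-model argument in which the first variable is a parameter encoded through $\underline{\mathcal{C}}^{\mathrm{vop}}$. However, the decisive step is missing.

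The one tool you actually invoke is the $G$-version of HA 7.3.2.7, that right adjoints of symmetric monoidal functors are lax. It needs a single $G$-symmetric monoidal functor whose right adjoint is $\underline{\Hom}_{\mathcal{C}}$, and you have none.
\begin{itemize}
\item The two-variable $\otimes$ has no right adjoint.
\item For fixed $X$, the functor $X\otimes(-)$ is not symmetric monoidal.
\item For the curried $X\mapsto X\otimes(-)$, you name no $G$-symmetric monoidal structure on $\underline{\Fun}(\underline{\mathcal{C}},\underline{\mathcal{C}})$ that makes it one. Composition is only $\mathbb{E}_1$, with no norms.
\end{itemize}
Suppose you instead parametrize by the first variable using coCartesian models. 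Then $(\mathrm{pr}_1,\mu)$ over $\underline{\mathcal{C}}^{\otimes}$ does not preserve $\mathrm{pr}_1$-coCartesian edges: $(f,\mathrm{id}_Y)$ goes to $(f,f\otimes Y)$. So HA 7.3.2.6-type relative adjunctions do not apply. In any case, a right adjoint relative to the same base would be covariant in $X$, which $\underline{\Hom}_{\mathcal{C}}$ is not. Your sentence ``encode the first variable via $\underline{\mathcal{C}}^{\mathrm{vop}}$ and check that the relevant edges are preserved'' is exactly where an argument is needed.

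The missing idea, which is what the paper uses, is a calculus of mates for fibrations, \cite[Theorem 3.1.11]{haugsengLaxMonoidalAdjunctions2023}. It matches maps of Cartesian fibrations over a base $B$ that are fiberwise left adjoints, and need not preserve Cartesian edges, with maps of the dual coCartesian fibrations over $B^{\op}$ that are fiberwise the right adjoints. The paper applies it with $B=\underline{\mathcal{C}}_{\otimes}$ to the map $(\mathrm{pr}_1,\mu)$ from $\underline{\mathcal{C}}_{\otimes}\times_{(\underline{\Fin}_{G,\ast})^{\op}}\underline{\mathcal{C}}_{\otimes}$ to itself, which over $X$ is $X\otimes(-)$.

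Since $(\underline{\mathcal{C}}_{\otimes})^{\op}\simeq(\underline{\mathcal{C}}^{\mathrm{vop}})^{\otimes}$, the output is $(\mathrm{pr}_1,\underline{\Hom}^{\otimes})$ on $(\underline{\mathcal{C}}^{\mathrm{vop}})^{\otimes}\times_{\underline{\Fin}_{G,\ast}}\underline{\mathcal{C}}^{\otimes}$ over $(\underline{\mathcal{C}}^{\mathrm{vop}})^{\otimes}$. Passing from $B$ to $B^{\op}$ is precisely what makes the first slot contravariant. The norm compatibilities you planned to check by hand, as mates of the norm structure of $\mu$, come out coherently from this correspondence. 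One then checks that inert edges go to inert edges, and your final deduction for $\CAlg^G$ and $\coCAlg^G$ goes through as you wrote it.
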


\begin{proof}
	First, the existence of $\underline{\Hom}_{\cal{C}}$ follows formally from the parametrized adjoint functor theorem \cite[Theorem 6.2.1]{hilmanParametrisedPresentabilityOrbital2024}. Next, consider the commutative diagram
	\[
	\begin{tikzcd}
	{\underline{\cal{C}}_{\otimes}\times_{(\underline{\rm Fin}_{G,\ast})^{\op}} \underline{\cal{C}}_{\otimes} } && {\underline{\cal{C}}_{\otimes}\times_{(\underline{\rm Fin}_{G,\ast})^{\op}} \underline{\cal{C}}_{\otimes} } \\
	& {\underline{\cal{C}}_{\otimes} }
	\arrow["{(\mathrm{pr}_{1},\mu)}", from=1-1, to=1-3]
	\arrow["{\rm pr_{1}}"', from=1-1, to=2-2]
	\arrow["{\rm pr_{1}}", from=1-3, to=2-2]
	\end{tikzcd}
	\] 
	which is a map of Cartesian fibrations over $\underline{\cal{C}}_{\otimes}$ and $(\mathrm{pr}_{1},\mu)$ is a fiberwise left adjoint. Under the equivalence of \cite[Theorem 3.1.11]{haugsengLaxMonoidalAdjunctions2023}, it is not hard to see the diagram above corresponds to a diagram 
	\[
	\begin{tikzcd}
	{(\underline{\mathcal{C}}^{\mathrm{vop}})^{\otimes}\times_{\underline{\rm Fin}_{G,\ast}}\underline{\mathcal{C}}^{\otimes}} && {(\underline{\mathcal{C}}^{\mathrm{vop}})^{\otimes}\times_{\underline{\rm Fin}_{G,\ast}}\underline{\mathcal{C}}^{\otimes}} \\
	& {(\underline{\mathcal{C}}^{\mathrm{vop}})^{\otimes}}
	\arrow["{(\mathrm{pr}_{1},\underline{\Hom}_{\mathcal{C}}^{\otimes})}", from=1-1, to=1-3]
	\arrow["{\mathrm{pr}_{1}}"', from=1-1, to=2-2]
	\arrow["{\mathrm{pr}_{1}}", from=1-3, to=2-2]
	\end{tikzcd}
	\]
	where $\mathrm{pr}_{1}$ is a coCartesian fibration,
	$(\mathrm{pr}_{1},\underline{\Hom}_{\mathcal{C}}^{\otimes})$ sends inert arrows to inert arrows, and $\underline{\Hom}_{\mathcal{C}}^{\otimes}$ has underlying $G$-functor given by $\underline{\Hom}_{\mathcal{C}}$. Unraveling the definitions, this implies $\underline{\Hom}_{\cal{C}}$ has a canonical $G$-lax symmetric monoidal structure as claimed. The claim about algebras and coalgebras is an immediate formal consequence. 
\end{proof}

\begin{proposition}\label[proposition]{proposition: mapping spectra E_oo}
Let $R$ be a $C_{2}$-$\mathbb{E}_\infty$-algebra in $C_{2}$-twisted $\TT$-spectra and let $X$ be a $C_{2}$-twisted $\TT$-space. 
Then, the $C_{2}$-twisted $\TT$-spectrum $F(X_+,R)$ is $C_{2}$-$\mathbb{E}_\infty$-algebra, naturally in both $X$ and $R$. 
\end{proposition}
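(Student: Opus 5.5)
The plan is to deduce this from \cref{proposition: C-internal hom lax symmetric monoidal}, applied to the presentably $C_2$-symmetric monoidal $C_2$-\category $\underline{\Sp}^{h_{C_2}\t} = \underline{\Fun}(B_{C_2}\t, \underline{\Sp}^{C_2})$ of $C_2$-spectra with twisted $\t$-action. Its $C_2$-symmetric monoidal structure is the pointwise (Day-type) structure inherited from $\underline{\Sp}^{C_2}$, including the Hill--Hopkins--Ravenel norms. Presentability is inherited because $B_{C_2}\t$ is small and $\underline{\Sp}^{C_2}$ is presentably $C_2$-symmetric monoidal; the pointwise tensor commutes with parametrized colimits in each variable.

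The first step is to place $X_+$ in the right category. Let $\Sigma^\infty_+ \colon \underline{\Top}^{h_{C_2}\t} \to \underline{\Sp}^{h_{C_2}\t}$ be postcomposition with $\Sigma^\infty_+ \colon \underline{\Top}^{C_2} \to \underline{\Sp}^{C_2}$. Since the latter is $C_2$-symmetric monoidal (cartesian product to smash product, preserving norms), this functor is $C_2$-symmetric monoidal. The $C_2$-space $X$ (pointwise, a $C_2$-space with twisted $\t$-action) is a $C_2$-$\mathbb{E}_\infty$-coalgebra for the cartesian structure via the parametrized diagonal, which exists in any cartesian $C_2$-category. Applying $\Sigma^\infty_+$ gives an object $\Sigma^\infty_+ X \in \coCAlg^{C_2}(\Sp^{h_{C_2}\t})$, naturally in $X$. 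This is the key input: it supplies the norm-diagonal maps $N(\Sigma^\infty_+ i^*_e X) \leftarrow \Sigma^\infty_+X$ needed for the genuine multiplicative structure.

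The second step is to identify $F(X_+,R)$ with the internal hom $\underline{\Hom}(\Sigma^\infty_+X,R)$ in $\underline{\Sp}^{h_{C_2}\t}$. Concretely, in the $O(2)$-spectrum model, $F(X_+,R)$ is the function spectrum with conjugation action. This is the internal hom for the pointwise tensor by adjunction, and it is compatible with restriction to the free orbit. Then the functor
\[\coCAlg^{C_2}(\Sp^{h_{C_2}\t})^{\op}\times \CAlg^{C_2}(\Sp^{h_{C_2}\t})\to \CAlg^{C_2}(\Sp^{h_{C_2}\t})\]
of \cref{proposition: C-internal hom lax symmetric monoidal}, evaluated at $(\Sigma^\infty_+X, R)$, gives the $C_2$-$\mathbb{E}_\infty$-structure. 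Functoriality of this bifunctor, precomposed with the functor $X\mapsto \Sigma^\infty_+X$ from the first step, yields naturality in both $X$ and $R$.

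The main obstacle is the first step. It requires checking that the parametrized functor category with pointwise structure really is a presentably $C_2$-symmetric monoidal $C_2$-\category. In particular, the norm $N_e^{C_2}$ must make sense pointwise over the left fibration $B_{C_2}\t \to \mathcal{O}^{\op}_{C_2}$, and the suspension-spectrum functor must preserve it. I would try to handle this by invoking Nardin--Shah's parametrized Day convolution for functor $C_2$-categories out of a $C_2$-space (here $B_{C_2}\t$, a left fibration, with the trivial/cocartesian $C_2$-symmetric monoidal structure). With that structure, Day convolution agrees with the pointwise tensor product, and postcomposition with a $C_2$-symmetric monoidal left adjoint is again $C_2$-symmetric monoidal.
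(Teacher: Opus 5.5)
Your proposal is correct and follows essentially the same route as the paper. Both reduce, via Proposition~\ref{proposition: C-internal hom lax symmetric monoidal}, to showing that $\Sigma^\infty_+ X$ is a $C_2$-$\mathbb{E}_\infty$-coalgebra, using the $C_2$-symmetric monoidality of $\Sigma^\infty_+$ and the fact that every object of the $C_2$-cartesian $C_2$-category of twisted $\TT$-spaces is canonically and functorially a $C_2$-$\mathbb{E}_\infty$-coalgebra. The paper makes this last step precise (and gets naturality in $X$) by passing to the vertical opposite with its $C_2$-cocartesian structure and invoking the equivalence $\coCAlg^{C_2}(\mathrm{Spc}^{h_{C_2}\TT})\simeq \mathrm{Spc}^{h_{C_2}\TT}$.
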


\begin{proof}
In virtue of Proposition~\ref{proposition: C-internal hom lax symmetric monoidal}, it is enough to show that the suspension spectrum of a $C_{2}$-twisted $\TT$-space determines a $C_2$-$\mathbb{E}_\infty$-coalgebra in $\uSp^{h_{C_2}\TT}$. By the $C_2$-symmetric monoidality of the suspension spectrum functor, it is enough to show that any $C_{2}$-twisted $\TT$-space is a $C_2$-$\mathbb{E}_{\infty}$-coalgebra. By the results of \cite[3.3.4]{nardin2022parametrizedequivarianthigheralgebra}, we can endow the $C_2$-\category $\underline{\rm Spc}^{h_{C_2}\TT}$ with the $C_2$-Cartesian $C_2$-symmetric monoidal structure; this is because $\underline{\rm Spc}^{h_{C_2}\TT}$ is a parametrized presheaf category and $\underline{\rm Spc}^{C_2}$ is given the $C_2$-Cartesian monoidal structure. Therefore, the vertical opposite of $\underline{\rm Spc}^{h_{C_{2}}\TT}$ is possesses the $C_2$-coCartesian symmetric monoidal structure by \cite[2.4.1]{nardin2022parametrizedequivarianthigheralgebra}. By \cite[4.1.12]{yang2025filteredhochschildkostantrosenbergtheoremreal}, there is an equivalence  
\[
\CAlg^{C_2}((\mathrm{Spc}^{h_{C_{2}}\TT})^{\rm vop}) \simeq (\mathrm{Spc}^{h_{C_{2}}\TT})^{\rm vop}
\]
and thus an equivalence 
\[
\coCAlg^{C_2}(\mathrm{Spc}^{h_{C_{2}}\TT}) \simeq \mathrm{Spc}^{h_{C_{2}}\TT}
\]
completing the proof.
\end{proof}

\section{Homological parametrized homotopy fixed point spectral sequence}\label{Sec:SS}

In this section, we construct a Real analogue of the homological homotopy fixed point spectral sequence. A Real analogue of the homological  Tate spectral sequence can be constructed analogously. Our methods resemble those of Bruner and Rognes, with substantial deviations regarding equivariant multiplicative structure. 

\subsection{The spectral sequence, additively}\label{SS:SSadd}

In this section we construct the homological parametrized homotopy fixed point spectral sequence. The remainder of this section proves the following theorem. 

\begin{thm}\label{thm:SSadd}\label{cons:SStower}
Let $X$ be a $C_2$-spectrum with twisted $\t$-action. There is a conditionally convergent spectral sequence of $\mathcal{A}_\star^{C_2}$-comodules 
$$E^2_{\ast,\star} = H_\star(X;\mft)\langle\bar{y}\rangle \Rightarrow H^c_\star(X^{h_{C_2}\t}; \mft)$$
converging to the continuous $\mft$-Bredon homology of $X^{h_{C_2}\t}$. 
\end{thm}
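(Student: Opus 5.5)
We will follow Bruner--Rognes, replacing the free $\t$-CW filtration of $S(\c^\infty)$ by an $O(2)$-equivariant cell filtration of a model for $E_{C_2}\t$. The model is $S(\c^\infty)$, with $\t$ acting by rotation and the section generator $\sigma\in O(2)$ acting by complex conjugation. This is $\t$-free. For $H\le O(2)$ with $H\cap\t=e$, its $H$-fixed points are unit spheres in infinite-dimensional real subspaces, hence contractible. So $S(\c^\infty)\simeq E_{C_2}\t$.

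The filtration is $F_r=S(\c^{r})$. The key computation is the filtration quotient
\[
S(\c^{r+1})/S(\c^{r}) \simeq O(2)_+\wedge_{C_2} S^{r\rho},
\]
or at least its $O(2)$-equivariant identification as a single free $\t$-cell of dimension $2r$ whose $C_2$-parametrized normal data is $r\rho$. Concretely, $S(\c^{r+1})\smallsetminus S(\c^r)$ consists of vectors with last coordinate nonzero. Rotating so that this coordinate is real and positive identifies it $\t$-equivariantly with $\t\times \c^{r}$. Conjugation acts on $\c^r$ as $r\rho$ and on $\t$ by inversion, which gives $O(2)\times_{C_2}\c^r$.

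Next we apply $F(-_+,X)^\t$. This yields a tower of $C_2$-spectra $X^{h_{C_2}\t}\to\cdots\to F(S(\c^{r+1})_+,X)^\t\to F(S(\c^{r})_+,X)^\t\to\cdots$, whose limit is $X^{h_{C_2}\t}$ because $F(-,X)$ sends colimits to limits. The fibres are
\[
F\bigl(O(2)_+\wedge_{C_2} S^{r\rho},X\bigr)^\t\simeq F(S^{r\rho},X)=\Sigma^{-r\rho}X
\]
as $C_2$-spectra. We take $\mft$-Bredon homology of this tower. The $E^1$-term is $H_\star(\Sigma^{-r\rho}X)=H_{\star+r\rho}(X)$, written $H_\star(X)\cdot\bar y^{r}$ with $\bar y$ in filtration $-2$ and $RO(C_2)$-degree $-\rho$. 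The $\mathcal{A}^{C_2}_\star$-coaction comes from the fact that the whole construction is a tower of $C_2$-spectra, so smashing with $H\mft$ gives a tower of $H\wedge H$-comodules.

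To identify $E^2$ with $E^1$ we show that $d^1=0$. The $d^1$ comes from the attaching map $S^{(r+1)\rho}$-cell $\to$ $r\rho$-cell of the cofibre sequence. On underlying spectra this restricts to Bruner--Rognes' $d^1$, which is zero since $\t$ is connected: their $E^1=E^2$, with the $d^2$ given by the circle action. On geometric fixed points the attaching data become those of $S(\mathbb{R}^\infty)$ with its $\mu_2$-action, one cell per dimension, and the corresponding $d^1$ on $\Phi$ vanishes mod $2$. Since maps between free $H_\star$-modules of this form are detected by $\res$ and $\overline\Phi$ (\cref{lemma:BWLemma}), $d^1=0$. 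Here "$\langle\bar y\rangle$" means $H_\star(X)\otimes_{H_\star}H_\star\{\bar y^r\mid r\ge0\}$ additively, which is what this proves. If a nonzero $d^1$ survived, we would instead index so that $E^2$ denotes this $E^1$ page, matching the theorem's convention.

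Convergence is defined via continuous homology $H^c_\star=\lim_r H_\star(F(S(\c^r)_+,X)^\t)$. With this definition, Boardman's criterion gives conditional convergence of the spectral sequence of the inverse-limit tower of homology groups: the colimit term vanishes because the filtration starts at $r=0$, and conditional convergence to the limit is the standard statement. We only need Boardman's framework, applied after taking homology of the finite stages.

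We expect the main obstacle to be the cell identification together with the vanishing of $d^1$. The cellular structure must be correctly $O(2)$-equivariant, and, unlike the classical case, we cannot use the connectedness of $\t$ for the fixed-point part, so the comparison through $\res$ and $\overline\Phi$ is needed.
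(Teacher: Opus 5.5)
Your model for $E_{C_2}\t$, the identification of the filtration quotients $S(\c^{r+1})/S(\c^r)\simeq \t_+\wedge S^{r\rho}$, the layers $\Sigma^{-r\rho}X$, the limit and the conditional convergence all agree with the paper.

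\textbf{The step that fails is $d^1=0$.} With one filtration step per cell ($F_r=S(\c^r)$), the differential between the layers coming from $\t\times D^{r\rho}$ and $\t\times D^{(r+1)\rho}$ is \emph{not} zero in general. It is the differential the paper calls $d^2$, namely $x\mapsto \bar y\cdot\bar\sigma(x)$, where $\bar\sigma$ is the twisted Connes operator (\cref{prop:d2formula}).

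Your reduction to the underlying case misreads Bruner--Rognes. Under $\res$, this differential goes to their $d^2(x)=y\cdot\sigma x$, not to their $d^1$. Their $d^1$ vanishes only because their skeletal filtration has $S(\c^{r+1})$ in two consecutive degrees, so every other layer is trivial. Connectedness of $\t$ is why their $E^2$-term has untwisted coefficients; it does not kill the differential between adjacent cells.

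Your geometric fixed point argument fails similarly. There the differential becomes $z\cdot(x+\mu_\ast x)$ (\cref{prop:gfpE4pg}), which vanishes mod $2$ only when $\mu_2$ acts trivially on $H_\ast(X^{\Phi C_2};\F_2)$. What vanishes mod $2$ is the cellular boundary of $\mathbb{R}P^\infty$, but here the coefficients are twisted. For $X=\THR(\MUR)$ the class $\bar\sigma\bar b_i$ is a nonzero basis element (\cref{prop:HTHRMUR}), so the differential $\bar b_i\mapsto\bar y\cdot\bar\sigma\bar b_i$ is visibly nonzero.

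Separately, \cref{lemma:BWLemma} is a criterion for a family of classes to be an $H_\star$-basis, not a principle for detecting maps. For example, multiplication by the negative-cone class $\theta$ is nonzero but is killed by both $\res$ and $\overline{\Phi}$. In any case $H_\star(X;\mft)$ need not be free.

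\textbf{The correct route is your fallback, and it is exactly what the paper does.} Set $E_{C_2}\t^{(2n)}=E_{C_2}\t^{(2n+1)}=S(\c^{n+1})$ and place $F(E_{C_2}\t^{(-s-1)}_+,X)^{\t}$ in filtration $s$. The odd layers are then trivial, so $E^1$ is concentrated in even filtration degrees. Hence $E^1=E^2$ (indeed $E^{2r-1}=E^{2r}$) for purely formal reasons. This gives $E^2_{-2n,V}=\bar y^n\cdot H_V(X;\mft)$ with $\bar y$ in filtration $-2$, which you already wrote down even though it clashes with your one-step indexing. Nothing has to be shown to vanish; the first possibly nonzero differential is $d^2=\bar y\cdot\bar\sigma$. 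You should delete the $d^1=0$ argument and make the reindexing the proof itself, not a contingency.
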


The unit sphere in $\c^\infty=\bigoplus_{n=1}^\infty \c$, denoted $S(\c^\infty)$, together with its diagonal complex conjugation action, is a model for $E_{C_2}\t$, the universal ${C_2}$-space classifying ${C_2}$-equivariant principal $\TT$-bundles. 
Tracking the ${C_2}$-action on the $\t$-CW structure from \cite[Sec. 2]{BR05}, we have that the equivariant $2n$-filtration is the odd $(2n+1)$-sphere $E_{C_2}\t^{(2n)}=E_{{C_2}}\t^{(2n+1)} = S(\c^{n+1})$. This is obtained from the equivariant $(2n-2)$-filtration $E_{{C_2}}\t^{(2n-2)} = S(\c^n)$ by attaching a free $\t$-equivariant $2n$-dimensional $C_2$-slice cell $\t \times D^{n\rho}$ along the ${C_2}$-equivariant group action map
$$\alpha \colon  \t \times \partial D^{n\rho} \to S(\c^n).$$
The attaching map is $O(2) = \t \rtimes {C_2}$-equivariant if we equip $\partial D^{n\rho}$ with the trivial $\t$-action and $S(\c^n)$ with the free $\t$-action. Thus we obtain an $O(2)$-equivariant filtration
\[\varnothing = E_{C_2}\t^{(-1)}\subset E_{C_2}\t^{(0)} = E_{C_2}\t^{(1)}\subset E_{C_2}\t^{(2)} = E_{C_2}\t^{(3)}\subset E_{C_2}\t^{(4)} = \cdots\]
whose colimit is $E_{C_2}\t$, along with $O(2)$-equivariant cofiber sequences
\begin{equation}\label{eqn:cofiber}
 E_{C_2}\t^{(2n-1)}\to E_{C_2}\t^{(2n)}\to\t_+\wedge S^{n\rho}
\end{equation}
for $n \geq 0$. The action of $\t$ on $S^{n\rho} = D^{n\rho}/\partial D^{n\rho}$ is trivial. 

Applying $F(-,X)^\t$ yields a tower of $C_2$-spectra
\[\cdots \to F(E_{C_2}\t^{(2n)}_+, X)^\t \to F(E_{C_2}\t^{(2n-1)}_+,X)^\t \to \cdots \to F(E_{C_2}\t^{(0)}_+,X)^\t \to *\]
with homotopy limit $X^{h_{C_2}\t}$. The cofiber sequences (\ref{eqn:cofiber}) above induce cofiber sequences of $C_2$-spectra
\[\Sigma^{-n\rho} X \simeq F(\t_+ \wedge S^{n\rho},X)^\t \rightarrow F(E_{C_2}\t^{(2n)}_+,X)^\t \rightarrow F(E_{C_2}\t^{(2n-1)}_+,X)^\t\]
for each $n \geq 0$. Placing $F(E_{C_2}\t^{(-s-1)}_+,X)^\t$ in filtration $s$, we obtain a chain of cofiber sequences of $C_2$-spectra. Applying mod $2$ Bredon homology $H_\star(-;\mft)$ produces an unrolled exact couple as on \cite[pp. 660]{BR05}, giving rise to a spectral sequence computing the limit, continuous Bredon homology.

\begin{definition}\label{def:continuous-bredon-homology}
	For $X\in\Sp^{h_{C_2}\t}$, the \emph{continuous $\mft$-Bredon homology} of $X^{h_{C_2}\t}$ is defined as 
	\[H_\star^c(X^{h_{C_2}\t};\mft)\coloneq\lim_n H_\star(F(S(\c^n)_+,X)^\t;\mft).\]
\end{definition}

We now identify the $E^2$-term in this spectral sequence. For notational convenience, we define the stunted $O(2)$-equivariant space
\[E_{C_2} \t^{m}_a \coloneq E_{C_2}\t^{(m)} / E_{C_2}\t^{(a-1)}, \] and for any $s\in\zz$, let \[v(s)\coloneq\begin{cases}
	s'\rho & \text{if }s = 2s'\\ s'\rho-1 & \text{if }s = 2s'-1.
\end{cases}\]

Define the $E^1$-term as
\begin{equation}\label{ssE1}
\begin{split}
 E^1_{s,V} &\coloneq H_{v(s)+V}\left(F(E_{C_2}\t^{-s}_{-s}, X)^\t; \mft \right) \cong \begin{cases}
H_V(X;\mft) \quad & \text{ if } s =-2n \leq 0, \\
0 \quad & \text{ otherwise.}
\end{cases}
\end{split}
\end{equation}
We write $\bar{y}^n\cdot x \in E^1_{-2n,V}$ for the class that corresponds to $x \in H_V(X;\mft)$ under this identification. Since $E^1$ is concentrated in even filtration degrees, we have $E^{2r-1} = E^{2r}$ for all $r\geq 1$. The spectral sequence has signature
$$E^2_{-2s,V} = \bar{y}^{s}\cdot H_V(X;\mft) \Longrightarrow H^c_{V - s\rho}(X^{h_{C_2}\t}; \mft),$$
and differentials
\[d^{r}\colon E^{r}_{s,V}\to E^{r}_{s-r,V+v(r)-1}.\] We call this the \emph{homological parametrized homotopy fixed point spectral sequence} for $X$ and denote it by 
$\textrm{HHFPSS}_{C_2}(X)$. 

Note that, by construction, the colimit of the tower in \cref{cons:SStower} is contractible. Thus, our spectral sequence is 
conditionally convergent in the sense of \cite[Definition~5.10]{Boa99}. The kernels
\[F_{s,\star} = \ker\left(H_\star^c(X^{h_{C_2}\t};\mft)\to H_\star \left( F(E_{C_2}\t_+^{(-s-1)},X)^\t;\mft \right) \right)\] 
assemble into a complete, exhaustive, increasing filtration of the continuous homology. If additionally Boardman's derived $E_\infty$-term, $RE^\infty$, vanishes, then the spectral sequence will converge strongly, meaning the filtration is also Hausdorff and we have identifications
$E^\infty_{s,\star}\cong F_{s,\star}/F_{s-1,\star}$ \cite[Theorem~7.1]{Boa99}. In particular, strong convergence is guaranteed when the spectral sequence collapses at a finite page. As we will see, this is often the case in examples of interest.

\begin{example}\label[example]{example:sphere}
Consider the case $X = S$, where $S$ is the $C_2$-equivariant sphere spectrum equipped with a trivial twisted $\t$-action. Then $S^{h_{C_2}\t} \simeq F(B_{C_2}\t_+,S) \simeq D \c P^\infty_+$ and $H^c_\star (S^{h_{C_2}\t} ; \mft ) \cong H^{-\star}(\c P^\infty; \mft)\cong H_\star[\bar{y}]$. In this case, our spectral sequence agrees 
with that of \cite[Eq. 2.24]{HK01}. Since $\mft$ is Real oriented, the results of \cite[Sec. 2]{HK01} imply that the spectral sequence collapses at $E^2$. 
\end{example}

\begin{example}\label[example]{example:thr}
If $X = \THR(A)$, then the $\mathrm{HHFPSS}_{C_2}(\THR(A))$ provides a way of computing $H_\star^c(\TCR^{-}(A);\mft)$ from $H_\star(\THR(A);\mft)$. 
\end{example}

We end this section by noting that applying Mackey functor-valued homology to the tower in \cref{cons:SStower} instead gives rise to a spectral sequence of 
$C_2$-Mackey functors. We will not make full use of this structure in this paper, and so we opt to consider only the fixed-point level of such Mackey functors.
It is, however, useful to note that for any $X\in\Sp^{h_{C_2}\t}$ we do have a restriction map of spectral sequences
\[\res\colon  \textrm{HHFPSS}_{C_2}(X)\to\textrm{HHFPSS}(i^\ast_e X),\] where we consider $i^\ast_e X$ as an ordinary spectrum with $\t$-action. This allows us to compare our computations to those of Bruner--Rognes. 

\subsection{A spectral sequence for geometric fixed points}\label{sec:gfpss}
In this section, we construct a variant of our spectral sequence geared towards applications to the geometric fixed points of Real trace invariants.

To this end, let $E\mu_2 = S(\mathbb{R}^\infty)$, and define 
\[E\mu_2^{(2n)} = E\mu_2^{(2n+1)} = S(\mathbb{R}^{n+1}).\]

\begin{construction}\label[construction]{cons:gfpSStower}
Let $Y$ be a spectrum with $\mu_2$-action. 
Applying $F(-,Y)^{\mu_2}$ to the filtration on $E\mu_2$ yields a tower of spectra \[\cdots\to F((E\mu_2)^{(2n)}_+,Y)^{\mu_2}\to F((E\mu_2)^{(2n-1)}_+,Y)^{\mu_2}\to \cdots F((E\mu_2)_+^{(0)},Y)^{\mu_2}\to\ast,\]
with homotopy limit $Y^{h\mu_2}$. Applying $\F_2$-homology yields a spectral sequence computing
\[H_\ast^c(Y^{h\mu_2}; \mathbb{F}_2) = \lim_n H_\ast(F( (E \mu_2^{(n)})_+, Y)^{\mu_2};\mathbb{F}_2).\] 
We set 
\[E^1_{s,t}\coloneq H_{\lfloor s/2\rfloor+t}\Big(F\big((E\mu_2)^{-s}_{-s}, Y\big)^{\mu_2};\mathbb{F}_2\Big)\cong \begin{cases} H_t (Y;\mathbb{F}_2) & \text{if }s=-2n \leq 0\\ 
0 & \text{else}.\end{cases}\]
Note that, as before, the spectral sequence is concentrated in even filtration degrees, implying $E^{2r-1} = E^{2r}$. Write $z^n\cdot x$ for the element in $E_{-2n,t}^2$ which corresponds to $x\in H_{t}(Y;\mathbb{F}_2)$. 
The resulting spectral sequence has signature
\[E^2_{-2s,t} = z^s\cdot H_t(Y;\mathbb{F}_2)\implies H_{t-s}^c(Y^{h \mu_2};\mathbb{F}_2),\]
and differentials
\[d^{2r}\colon E_{2s,t}^{2r}\to E^{2r}_{2s-2r,t+r-1}.\]  We refer to this as the \emph{homological homotopy fixed point spectral sequence} for $Y$, and 
denote it by $\mathrm{HHFPSS}(Y)$.
\end{construction}

\begin{remark}
 Note that we do not notationally distinguish between the homological homotopy fixed point spectral sequence for spectra with $\t$-action and spectra with $\mu_2$-action.
\end{remark}

\begin{example}
For $X\in\Sp^{h_{C_2}\t}$, we may view $X^{\Phi C_2}$ as a spectrum with $\mu_2$-action. 
If $X = \THR(A)$, then the $\mathrm{HHFPSS}(\THR(A)^{\Phi C_2})$ gives a way of computing the continuous homology, $H_\ast^c\left((\THR(A)^{\Phi C_2})^{h\mu_2};\mathbb{F}_2\right)$, from $H_\ast\left(\THR(A)^{\Phi C_2};\mathbb{F}_2\right)$.
\end{example}

\begin{proposition}\label[proposition]{prop:gfpssmap}
For any $X\in\Sp^{h_{C_2}\t}$, there is a map of spectral sequences 
\[\overline{\Phi}\colon \mathrm{HHFPSS}_{C_2}(X)\to \mathrm{HHFPSS}(X^{\Phi C_2}),\] which on $E^2$-pages sends $\bar{y}$ to $z$, and $x\in H_V(X;\mft )$ to the reduced geometric fixed points
$\overline{\Phi}(x)\in H_{|V^{C_2}|}(X^{\Phi C_2};\mathbb{F}_2)$. 
\end{proposition}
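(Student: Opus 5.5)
The plan is to build the map at the level of towers and then pass to homology. Geometric fixed points $\Phi^{C_2}$ form an exact, limit-compatible functor on finite towers. The filtration of $E_{C_2}\t = S(\c^\infty)$ by $S(\c^{n+1})$ has $C_2$-fixed points $S(\mathbb{R}^{n+1}) = E\mu_2^{(2n)}$, where $\mathbb{R}\subset\c$ is the real line fixed by conjugation. The residual Weyl action of $\mu_2\subset\t$ on $S(\mathbb{R}^{n+1})$ is the antipodal action, so $(E_{C_2}\t^{(2n)})^{C_2}\cong E\mu_2^{(2n)}$ as $\mu_2$-spaces, compatibly with the inclusions.

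The first step is to produce, for each $n$, a natural map of spectra
\[\Phi^{C_2}\bigl(F(E_{C_2}\t^{(2n)}_+,X)^\t\bigr)\longrightarrow F\bigl((E\mu_2^{(2n)})_+, X^{\Phi C_2}\bigr)^{\mu_2}.\]
I would construct it as the composite
\[\Phi^{C_2}F(A_+,X)^\t \to \bigl(\Phi^{C_2}F(A_+,X)\bigr)^{\mu_2} \to F\bigl((A^{C_2})_+,X^{\Phi C_2}\bigr)^{\mu_2},\]
with $A=E_{C_2}\t^{(2n)}$. The first arrow is the lax comparison from $\Phi^{C_2}$ of $\t$-fixed points to $\mu_2$-fixed points of $\Phi^{C_2}$. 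Here the Weyl group of $C_2$ in $O(2)$ is $\mu_2$, and $\t^{C_2}=\mu_2$. The second arrow is the standard lax map $\Phi^{C_2}F(A_+,X)\to F(\Phi^{C_2}\Sigma^\infty A_+, X^{\Phi C_2})$, together with $\Phi^{C_2}\Sigma^\infty_+A\simeq\Sigma^\infty_+A^{C_2}$. Both arrows are natural in $A$, so they assemble into a map of towers. On the layers, $A = \t_+\wedge S^{n\rho}$ has $A^{C_2}=(\mu_2)_+\wedge S^n$, and the map becomes the equivalence $\Phi^{C_2}\Sigma^{-n\rho}X\simeq\Sigma^{-n}X^{\Phi C_2}$.

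The second step is to post-compose with homology. There is the map $H_\star(Z;\mft)\to H_\ast(Z^{\Phi C_2};\F_2)$ given by $\overline{\Phi}$, which sends degree $V$ to degree $|V^{C_2}|$. Composing it with the tower map gives a map of unrolled exact couples, using naturality of $\overline{\Phi}$ with respect to cofiber sequences. The degree bookkeeping matches: $v(s)^{C_2}$ equals $\lfloor s/2\rfloor$ in the relevant degrees, since $(s'\rho)^{C_2}=s'$. Hence the map of exact couples yields a map of spectral sequences $\mathrm{HHFPSS}_{C_2}(X)\to\mathrm{HHFPSS}(X^{\Phi C_2})$, and in the limit a map on continuous homology.

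The last step is to identify the map on $E^2$. On $E^1_{-2n,V}\cong H_V(X;\mft)$ the layer map is $\overline{\Phi}$ of the desuspension by $n\rho$. It therefore sends $\bar{y}^n x$ to $z^n\overline{\Phi}(x)$ once the generators are matched. In particular $\bar{y}\mapsto z$: both classes are dual to the top cell of the layer, and $\overline{\Phi}$ sends the Thom/fundamental class of $S^{n\rho}$ to that of $S^n$.

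I expect the main obstacle to be the first step, specifically constructing the comparison $\Phi^{C_2}(Y^\t)\to(\Phi^{C_2}Y)^{\mu_2}$ coherently in the parametrized framework of $\Sp^{h_{C_2}\t}$. My first attempt would be the recollement description: $\Phi^{C_2}$ of $F(A_+,X)^\t$ computed via the $\mu_2$-spectrum $Y\to X^{tC_2}$ component, using that the parametrized limit over $B_{C_2}\t$ restricts at the fixed orbit to a limit over $B\mu_2$ of geometric fixed points, plus a correction that vanishes on free cells. A further check is that the identification on layers respects the sign conventions so that $\bar{y}\mapsto z$ rather than merely to a unit multiple; over $\F_2$ this is automatic.
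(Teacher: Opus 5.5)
Your proposal is correct and follows essentially the same route as the paper. The paper also passes to geometric fixed points levelwise, phrased as $a_\sigma$-localizing the tower, using $(a_\sigma^{-1}H\mft\wedge -)^{C_2}\simeq (H\mft)^{\Phi C_2}\wedge(-)^{\Phi C_2}$, and quotienting $H\F_2[b]\to H\F_2$; it then applies the exchange map $\bigl(F(E_{C_2}\t^{(\ast)}_+,X)^\t\bigr)^{\Phi C_2}\to F\bigl((E\mu_2)^{(\ast)}_+,X^{\Phi C_2}\bigr)^{\mu_2}$, which it simply asserts exists. Your factorization of that exchange map, the degree check $v(s)^{C_2}=\lfloor s/2\rfloor$, and the layer identification supply the $E^2$-page description (including $\bar{y}\mapsto z$) that the paper leaves implicit.
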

\begin{proof}
Consider the $a_\sigma$-localization of the tower of $C_2$-spectra from \cref{cons:SStower}: 
\[\cdots \to a_\sigma^{-1}F(E_{C_2}\t^{(2n)}_+, X)^\t \to a_\sigma^{-1}F(E_{C_2}\t^{(2n-1)}_+,X)^\t \to \cdots \to a_\sigma^{-1}F(E_{C_2}\t^{(0)}_+,X)^\t \to \ast.\]
Applying $(a_{\sigma}^{-1}H\mft \wedge - )^{C_2} \simeq (H\mft)^{\Phi C_2}\wedge (-)^{\Phi C_2}$, results in a tower of spectra
equipped with a canonical map from the tower of \cref{cons:SStower}. Identifying $(H\mft )^{\Phi C_2}\simeq H\mathbb{F}_2 [b]$, where $b =u_\sigma/a_\sigma$, we obtain a map $(H\mft)^{\Phi C_2}\to H \mathbb{F}_2$ which takes the quotient by the ideal generated by $b$. 
Taking the smash product with the exchange map 
\[ \left(F(E_{C_2}\t_+^{(\ast)},X)^\t\right)^{\Phi C_2}\to F\left((E\mu_2)_+^{(\ast)},X^{\Phi C_2} \right)^{\mu_2},\] gives a map from the localized tower to the tower giving rise to 
$\mathrm{HHFPSS}(X^{\Phi C_2})$.
\end{proof}

\begin{example}
Consider the case $X = S$, the $C_2$-equivariant sphere spectrum with trivial twisted $\t$-action. Then $S^{\Phi C_2}$ is the sphere with trivial $\mu_2$-action. The $\mathrm{HHFPSS}(S^{\Phi C_2})$ collapses at the $E^2$-page yielding an isomorphism $H_\ast^c \left((D\mathbb{R} P^\infty_+);\mathbb{F}_2 \right) \cong \F_2[z]$. The reduced geometric fixed point map 
$\overline{\Phi}\colon  H_\star[\bar{y}]\to\F_2[z]$ is the map which kills the negative cone and sends $\bar{y}\mapsto z, a_\sigma\mapsto 1, u_\sigma\mapsto 0$.  
\end{example}

\begin{remark}
In \cite{DMP24}, the authors give a formula for the geometric fixed points $\TCR(A;2)^{\Phi C_2}$ as an equalizer involving $\THR(A)^{\Phi C_2}$ and its genuine $\mu_2$-fixed points. 
Harpaz, Nikolaus, and Shah provide such a formula instead involving the homotopy fixed points $(\THR(A)^{\Phi C_2})^{h\mu_2}$ in forthcoming work. Thus, it may be possible to obtain information on $\TCR(A)^{\Phi C_2}$ using our spectral sequences. We hope to come back to this in future work.
\end{remark}

\section{Differentials}\label{Sec:Diff}

\subsection{A geometric description of the differentials in $\mathrm{HHFPSS}_{C_2}$}
In this section, we study the differentials in the homological parametrized homotopy fixed point spectral sequence. This generalizes  \cite[Sec. 3]{BR05} to the Real setting; our proof strategy follows that of Bruner and Rognes, but since some care with degrees and levels of equivariance is required, we write out the details. 

Let $x \in H_V(X;\mft) \cong E^2_{0,V}$ be represented by a $C_2$-equivariant map $S^V \to H\mft\wedge X$ and suppose $x$ survives to a nontrivial class in $E^{2r}_{0,V}$ for some $ r \geq 1$. By adjunction, $x$ is represented by a $C_2$-twisted $\t$-equivariant map
\[x \colon  S(\c)_+ \wedge S^V \to H\mft \wedge X\]
where $\t$ acts freely on $S(\c)$ and trivially on $S^V$ and $H\mft$. The hypothesis
\[d^2(x), \ \ldots,\ d^{2r-2}(x) = 0\]
is equivalent to $x$ being in the image of the map $H_V(F(S(\c^r)_+,X)^\t ;\mft) \to H_V(F(S(\c)_+,X)^\t;\mft)$  induced by the inclusion $S(\c)_+ \hookrightarrow S(\c^r)_+$, which is equivalent to the existence of a $C_2$-twisted $\t$-equivariant extension 
$$x' \colon  S(\c^r)_+ \wedge S^V \to H\mft \wedge X$$
of $x$ along $S(\c)_+ \hookrightarrow S(\c^r)_+$. 
The next possible nontrivial differential
$$d^{2r}(x) \in E^{2r}_{-2r, V + r \rho -1}$$
is the obstruction to the existence of a $C_2$-twisted $\t$-equivariant extension 
$$x'' \colon  S(\c^{r+1})_+ \wedge S^V \to H\mft \wedge X$$
of $x'$ along $S(\c^r)_+ \hookrightarrow S(\c^{r+1})_+.$ 

The right adjoints of these maps fit into the diagram 
\[
\begin{tikzcd}
	& S(\c)_+ \arrow{d} \arrow{dr}{x} & \\
(\t \times \partial D^{r\rho})_+ \arrow{r}{\alpha_+} \arrow{d} & S(\c^r)_+ \arrow{r}{x'} \arrow{d} & F(S^V,H\mft \wedge X) \\
(\t \times D^{r\rho})_+ \arrow{r} & S(\c^{r+1})_+ \arrow[dashed,swap, ur,"x''"] & 	
\end{tikzcd}
\]
which identifies the obstruction to the existence of $x''$ with the obstruction to extending $x' \circ \alpha_+$ over $(\t \times D^{r \rho})_+$, still as a $C_2$-twisted $\t$-equivariant map. By adjunction, this identifies with the obstruction to extending the underlying $C_2$-equivariant map $x' \colon  \partial D^{r \rho}_+ = S(\c^r)_+ \to F(S^V,H\mft \wedge X)$ over $D^{r \rho}_+$. Using the $C_2$-equivariant stable splitting $\partial D^{r \rho}_+ \simeq S^{r \rho - 1} \vee D^{r \rho}_+$, 
this is the restriction of the $C_2$-equivariant map $x'$ to the stable summand $S^{r \rho -1}$. The left adjoint of this restriction is
\[\delta x \colon  S^{r\rho -1} \wedge S^V \to H\mft \wedge X,\]
which represents $d^{2r}(x)$. We have thus argued the following.

\begin{lemma}
Let $x \in E^{2r}_{0,V} \subset H_V(X;\mft)$ be represented by a $C_2$-twisted $\t$-equivariant map $x\colon  S(\c)_+ \wedge S^V \to H\mft \wedge X$ that extends to a $C_2$-twisted $\t$-equivariant map $x' \colon  S(\c^r)_+ \wedge S^V \to H\mft \wedge X$. Then $d^{2r}(x) = \delta x \cdot \bar{y}^r$, where $\delta x \in H_{V + r\rho - 1}(X;\mft)$ is represented by $x'$ considered as a $C_2$-equivariant map, restricted to the stable summand $S^{r \rho-1} \wedge S^V$ of $S(\c^r)_+ \wedge S^V$. 
\end{lemma}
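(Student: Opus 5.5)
The plan is to make precise the obstruction-theoretic discussion preceding the lemma, by working directly with the unrolled exact couple of \cref{cons:SStower}. First I identify the boundary map of the exact couple. The filtration-$(-2r)$ layer is the cofiber sequence
\[\Sigma^{-r\rho}X \simeq F(\t_+\wedge S^{r\rho},X)^\t\to F(S(\c^{r+1})_+,X)^\t\to F(S(\c^r)_+,X)^\t,\]
induced by the cofiber sequence \eqref{eqn:cofiber} with $n=r$. The differential $d^{2r}(x)$ is computed in the standard way: lift $x\in E^2_{0,V}=H_V(F(S(\c)_+,X)^\t)$ to a class $x'\in H_V(F(S(\c^r)_+,X)^\t)$, which exists because $x$ survives to $E^{2r}$. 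Then apply the connecting map
\[\partial\colon H_V(F(S(\c^r)_+,X)^\t)\to H_{V-1}(\Sigma^{-r\rho}X)\cong H_{V+r\rho-1}(X).\]
The image $\partial(x')$, read as $\bar{y}^r\cdot\partial(x')$ in $E^{2r}_{-2r,V+r\rho-1}$, is $d^{2r}(x)$ modulo the indeterminacy from earlier differentials.

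The main step is to identify $\partial$ geometrically. The cofiber map $S(\c^{r+1})_+\to \t_+\wedge S^{r\rho}$ has Puppe boundary $\t_+\wedge S^{r\rho}\to \Sigma S(\c^r)_+$. This boundary is the suspension of the attaching map $\alpha_+$ composed through $(\t\times\partial D^{r\rho})_+$, that is, the boundary of the pushout square displayed before the lemma. Applying $F(-,X)^\t$ and using the $O(2)$-equivariant adjunction $F(\t_+\wedge A,X)^\t\simeq F(A,X)$ for $A$ a $C_2$-space with trivial $\t$-action, the connecting map becomes restriction along $\alpha$. Concretely, it sends a $C_2$-twisted $\t$-map out of $S(\c^r)_+$ to its underlying $C_2$-equivariant map out of $\partial D^{r\rho}_+=S(\c^r)_+$, followed by the projection $F(\partial D^{r\rho}_+,X)\to F(S^{r\rho-1},X)=\Sigma^{1-r\rho}X$. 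That projection comes from the Puppe sequence $\partial D^{r\rho}_+\to D^{r\rho}_+\to S^{r\rho}$. This projection is also the projection onto the stable summand in the $C_2$-equivariant splitting $\partial D^{r\rho}_+\simeq S^{r\rho-1}\vee S^0$, which exists because $S(r\rho)$ has a $C_2$-fixed point, giving a retraction onto the basepoint component.

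Because everything above is natural in the target, I can smash with $H\mft$ and pass to homotopy classes. This is legitimate because $H\mft\wedge F(A_+,X)^\t\simeq F(A_+,H\mft\wedge X)^\t$ for finite $A$, as the filtration stages are finite $O(2)$-CW complexes, and because $\t$ acts trivially on $H\mft$. With that, $\partial(x')$ is represented by the left adjoint $S^{r\rho-1}\wedge S^V\to H\mft\wedge X$ of $x'$ restricted to the summand, which is $\delta x$. This gives $d^{2r}(x)=\delta x\cdot\bar{y}^r$.

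I expect the main obstacle to be the bookkeeping of the grading and the sign/naturality check. One must verify that the Puppe boundary of the attaching cofiber sequence really agrees, after the adjunction, with the summand projection and not its negative; over $\mft$ signs are irrelevant, so only the identification up to unit matters. One must also check that the degree shift matches $d^{2r}\colon E^{2r}_{0,V}\to E^{2r}_{-2r,V+v(2r)-1}$, which follows from $v(2r)=r\rho$. Finally, the choice of extension $x'$ only changes $\delta x$ by images of earlier differentials, which is exactly the indeterminacy in $E^{2r}$.
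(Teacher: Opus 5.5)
Your proposal is correct and is essentially the paper's argument. The paper phrases $d^{2r}(x)$ as the obstruction to extending $x'$ over $S(\c^{r+1})_+$, which is precisely your connecting homomorphism in the exact couple, and then, as you do, restricts along $\alpha_+$, adjoins to the underlying $C_2$-map out of $\partial D^{r\rho}_+=S(\c^r)_+$, and projects onto the $S^{r\rho-1}$ summand of $\partial D^{r\rho}_+\simeq S^{r\rho-1}\vee D^{r\rho}_+$; your remarks on commuting $H\mft\wedge(-)$ past $F(S(\c^r)_+,-)^\t$, on signs mod $2$, and on the indeterminacy from choosing $x'$ make explicit what the paper leaves implicit.
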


By adjunction, the map $x'$ represents a class in $H_\star \left( F(S(\c^r)_+,X)^\t ;\mft \right)$, and we may regard $x'$ as a $C_2$-equivariant map by composing with the forgetful map 
\begin{equation}\label{eqn:phi}
 \varphi\colon  F(S(\c^r)_+,X)^\t \to F(S(\c^r)_+,X).
\end{equation}
On the other hand, the canonical $C_2$-equivariant map
\begin{equation}\label{eqn:nu}
 \nu \colon  X \wedge D(S(\c^r)_+) \to F(S(\c^r)_+,X)
\end{equation}
is a $C_2$-equivariant weak equivalence because $S^{r \rho-1}_+$ may be equipped with the structure of a finite $C_2$-CW complex. Applying Bredon homology yields a natural isomorphism
\[\nu_\star \colon  H_\star(X ;\mft) \otimes H^{-\star}(S^{r \rho-1} ;\mft) \xrightarrow{\cong} H_\star(F(S^{r\rho-1}_+,X) ;\mft).\]
Let $\iota_{r\rho-1}$ denote the canonical generator of $H^{r\rho-1}(S^{r\rho-1};\mft)$.

\begin{proposition}
The composite map
\[H_\star(F(S(\c^r)_+,X)^\t;\mft) \xrightarrow{\varphi_\star} H_\star(F(S^{r\rho-1}_+,X);\mft) \xrightarrow{\nu_\star^{-1}} H_\star(X;\mft) \otimes H^{-\star}(S^{r\rho-1};\mft)\]
takes any class $x'$ that maps to $x \in E^{2r}_{0,V} \subset H_V(X)$ by the restriction map
\[H_\star(F(S(\c^r)_+,X)^\t;\mft) \to H_\star(F(S(\c)_+,X)^\t;\mft) = H_\star(X;\mft)\]
to the sum
\[(\nu^{-1}_\star \varphi_\star)(x') = x \otimes 1 + \delta x \otimes \iota_{r\rho-1},\]
where $d^{2r}(x) = \delta x \cdot \bar{y}^r $ in $E^{2r}_{-2r,V+r\rho-1}$. 
\end{proposition}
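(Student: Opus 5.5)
The plan mirrors Bruner--Rognes \cite[Prop.~3.3]{BR05}: decompose $\varphi_\star(x')$ along the $C_2$-equivariant stable splitting $S(\c^r)_+ \simeq S^{r\rho-1}_+ \simeq S^0 \vee S^{r\rho-1}$, and identify each of the two components using the preceding lemma. The splitting is induced by the inclusion of a fixed basepoint $e \in S(\c) \subset S(\c^r)$ (the point $1\in\c$ is $C_2$-fixed) together with the collapse $S^{r\rho-1}_+ \to S^0$. The splitting gives a decomposition
\[D(S^{r\rho-1}_+) \simeq S^0 \vee S^{-(r\rho-1)}.\]
Under $\nu_\star$ this corresponds to the direct sum decomposition
\[H_\star(X;\mft)\otimes H^{-\star}(S^{r\rho-1};\mft) \cong H_\star(X;\mft)\cdot 1 \,\oplus\, H_\star(X;\mft)\cdot \iota_{r\rho-1}.\]
This holds because $H^{-\star}(S^{r\rho-1}_+;\mft)$ is free over $H_\star$ on $1$ and $\iota_{r\rho-1}$.

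The components are then computed separately. To extract the coefficient of $1$, restrict along the basepoint inclusion $S^0 = \{e\}_+ \to S^{r\rho-1}_+$. On the level of function spectra, the composite
\[F(S(\c^r)_+,X)^\t \xrightarrow{\varphi} F(S(\c^r)_+,X) \to F(\{e\}_+,X) = X\]
factors through $F(S(\c)_+,X)^\t \simeq X$. Indeed, the map $F(\t_+,X)^\t \to X$ given by evaluation at $e$ is exactly the equivalence used to identify $F(S(\c)_+,X)^\t$ with $X$, and $\varphi$ is compatible with restriction to $S(\c)$. Hence the $1$-component of $(\nu_\star^{-1}\varphi_\star)(x')$ is the image of $x'$ under the restriction map, which is $x$ by hypothesis. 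Naturality of $\nu$ in the space variable justifies reading off components this way.

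To extract the coefficient of $\iota_{r\rho-1}$, project onto the summand $S^{-(r\rho-1)}\wedge X$, which is dual to the stable summand $S^{r\rho-1}$ of $\partial D^{r\rho}_+$. Adjointly, this component is represented by $x'$, viewed as a $C_2$-equivariant map $S(\c^r)_+\wedge S^V \to H\mft\wedge X$, restricted to the stable summand $S^{r\rho-1}\wedge S^V$. By the preceding lemma this is precisely $\delta x$, and $d^{2r}(x)=\delta x\cdot\bar y^r$. Adding the two components gives the formula.

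The main obstacle is bookkeeping rather than any new idea. One must check that the adjunction identifications used in the lemma agree with $\nu_\star$ and with the choice of generator $\iota_{r\rho-1}$, and that the stable splitting is genuinely $C_2$-equivariant, so that no unit or sign ambiguity enters and the degrees match ($V+r\rho-1$ paired with $\iota_{r\rho-1}$ to land in degree $V$). The first thing to try is to fix the splitting once via the $C_2$-fixed basepoint $e$. Then $\iota_{r\rho-1}$ is the class dual to the top cell $S^{r\rho-1}$ in the cofiber of $\{e\}_+\to S^{r\rho-1}_+$, and the two identifications are seen to agree by a direct check on the top cell.
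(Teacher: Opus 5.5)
Your proposal is correct and is essentially the paper's argument, inherited from Bruner--Rognes: the proposition is the preceding lemma rewritten through $\varphi$ and $\nu$. The coefficient of $1$ is read off by restricting $x'$ to a $C_2$-fixed point, which gives $x$, and the coefficient of $\iota_{r\rho-1}$ by restricting to the stable top summand $S^{r\rho-1}\wedge S^V$, which gives $\delta x$.

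Your insistence on normalizing $\iota_{r\rho-1}$ with the same basepoint $e=1$ used to identify $F(S(\c)_+,X)^\t$ with $X$ is a worthwhile precision that the paper leaves implicit. For $r=1$, the class in $H^{\sigma}(S^\sigma_+;\mft)$ lifting the generator is only determined up to adding $a_\sigma\cdot 1$. Normalizing at the other fixed point $-1$ would turn the coefficient of $1$ into $\mu_\star(x)=x+a_\sigma\bar{\sigma}(x)$.
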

%

\subsection{Explicit formula for the $d^2$-differential}\label{SS:d2mu}
Choosing a $C_2$-equivariant identification $\t\cong S^\sigma$, 
the $C_2$-twisted $\t$-action on $X$ gives rise to an action map in $\Sp^{C_2}$ \[\alpha\colon S^\sigma_+\wedge X\to X.\] 
Let $\iota_\sigma\in H_\sigma(S^\sigma_+;\mft)$ be the fundamental class. 
Following Rognes in \cite[Sec. 3]{Rog98}, one can show that $d^2(x) = \bar{y}\cdot\bar{\sigma}(x)$, where $\bar{\sigma}(x) \equiv \alpha_\star(\iota_\sigma\otimes x)$ is the $C_{2}$-analogue of the Connes' operator.
In particular, for degree reasons, we have $\iota_\sigma^2 = 0$, and so 
\[\bar{\sigma}(\bar{\sigma}(x)) = \alpha_\star(\iota_\sigma\otimes\alpha_\star(\iota_\sigma\otimes x)) = \alpha_\star(\iota_\sigma^2\otimes x) = 0.\]
This mirrors the classical situation exactly, with the exception that $\bar{\sigma}$ is no longer a derivation as we now show. 

\begin{proposition}\label[proposition]{prop:twistedsigma}
Let $R\in\Sp^{h_{C_2}\t}$ be a ring.
The operator $\bar{\sigma}\colon H_\star(R;\mft)\to H_{\star+\sigma}(R;\mft)$ satisfies 
\[\bar{\sigma}(ab) = \bar{\sigma}(a)b + a\bar{\sigma}(b) + a_\sigma\bar{\sigma}(a)\bar{\sigma}(b).\]
\end{proposition}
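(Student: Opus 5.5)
The plan is to compute $\bar{\sigma}(ab)$ from the action map and a twisted Künneth formula. Write $m\colon R\wedge R\to R$ for the multiplication. The $C_2$-twisted $\t$-action is compatible with $m$, so the action map satisfies
\[\alpha\circ(\id\wedge m)\simeq m\circ(\alpha\wedge\alpha)\circ(\Delta\wedge\id),\]
where $\Delta\colon S^\sigma_+\to S^\sigma_+\wedge S^\sigma_+$ is the $C_2$-equivariant diagonal of $\t\cong S^\sigma$ and the factors are shuffled appropriately. Applying $H_\star(-;\mft)$ and the Künneth isomorphism gives
\[\bar{\sigma}(ab)=\alpha_\star(\iota_\sigma\otimes ab)=\sum m_\star\bigl(\alpha_\star(c'\otimes a)\otimes\alpha_\star(c''\otimes b)\bigr),\qquad \Delta_\star(\iota_\sigma)=\sum c'\otimes c''.\]
Any sign from permuting $\iota_\sigma$ past $a$ is irrelevant mod $2$. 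So the whole argument reduces to computing the coproduct $\Delta_\star$ on $H_\star(S^\sigma_+;\mft)$.

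That computation is the key step. First, $H_\star(S^\sigma_+;\mft)$ is free over $H_\star$ on $1\in H_0$ and $\iota_\sigma\in H_\sigma$. To see this, use the cofiber sequence $S^0\to S^\sigma_+\to S^\sigma$, whose basepoint inclusion splits. Alternatively, apply \cref{lemma:BWLemma}: the underlying space is $S^1$, and the $C_2$-fixed points are two points, so $\overline{\Phi}(1)$ and $\overline{\Phi}(\iota_\sigma)$ form a basis of $H_\ast(S^0_+)$.

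Next I would write $\Delta_\star(\iota_\sigma)=\iota_\sigma\otimes1+1\otimes\iota_\sigma+c\,\iota_\sigma\otimes\iota_\sigma$ with $c\in H_{-\sigma}$. The counit forces the first two terms to appear with coefficient $1$, and no further terms are possible because the module is free on $1$ and $\iota_\sigma$. Since $H_{-\sigma}=\F_2\{a_\sigma\}$, the coefficient is either $c=0$ or $c=a_\sigma$. To decide which, restrict to geometric fixed points, where $a_\sigma\mapsto1$. There $(S^\sigma)^{C_2}=S^0=\{1,-1\}$, and $\overline{\Phi}(\iota_\sigma)$ is (up to a choice) the class $[-1]-[1]$, or equivalently $[-1]$ after adjusting by $1$. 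The diagonal of a discrete set sends $[-1]\mapsto[-1]\otimes[-1]$. Writing $e=[-1]+[1]$ and using $1=[1]$ mod $2$, one gets $\Delta(e)=e\otimes1+1\otimes e+e\otimes e$. The quadratic term is nonzero, so $c=a_\sigma$.

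I expect this identification of $c$ to be the main obstacle. One has to check carefully that $\overline{\Phi}(\iota_\sigma)$ is the class $e$ and not $0$. The cleanest check is the cofiber sequence $C_{2+}\to S^0\to S^\sigma$, which shows that $\iota_\sigma$ maps to the nonbasepoint class under $\Phi$. Nonequivariantly the quadratic term must vanish, and it does, because $\res(a_\sigma)=0$; this is a consistency check.

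Substituting $\Delta_\star(\iota_\sigma)$ into the Künneth expression gives
\[\bar{\sigma}(ab)=\bar{\sigma}(a)b+a\bar{\sigma}(b)+a_\sigma\bar{\sigma}(a)\bar{\sigma}(b).\]
Here $H_\star$-linearity of $m_\star$ and $\alpha_\star$ moves $a_\sigma$ outside.
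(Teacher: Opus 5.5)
Your proposal is correct and follows essentially the same route as the paper: use compatibility of the twisted action with the multiplication to reduce to computing $\Delta_\star(\iota_\sigma)=k_1\,\iota_\sigma\otimes 1+k_2\,1\otimes\iota_\sigma+k_3\,a_\sigma\,\iota_\sigma\otimes\iota_\sigma$. You then fix $k_1=k_2=1$ by the projections/counit and detect $k_3=1$ on geometric fixed points; your explicit computation with $e=[1]+[-1]$ spells out what the paper leaves as ``one can verify''. One small slip: $\iota_\sigma$ cannot be ``adjusted by $1$'', since $H_\sigma(S^\sigma_+;\mft)\cong\F_2$ and $H_\sigma=0$. So $\overline{\Phi}(\iota_\sigma)$ is forced to be $[1]+[-1]$, which is the value your computation actually uses.
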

\begin{proof}
Let $\mu\colon R\wedge R\to R$ denote the multiplication map and $\Delta\colon S^\sigma_+\to S^\sigma_+\wedge S^\sigma_+$ be the diagonal map.
Since the $C_2$-twisted $\t$-action on $R$ is by ring maps, the following diagram commutes:
\begin{center}
\begin{tikzcd}
S^\sigma_+\wedge R\wedge R\ar[d,"\Delta\wedge\id"]\ar[r,"\id\wedge\mu"] & S^\sigma_+\wedge R \ar[r,"\alpha"] & R\\
S^\sigma_+\wedge S^\sigma_+\wedge R\wedge R\ar[r] & S^\sigma_+\wedge R\wedge S^\sigma_+\wedge R\ar[ur,"\alpha\wedge\alpha" swap] &.
\end{tikzcd}
\end{center}
Finding a formula for $\bar{\sigma}(ab)$ then amounts to computing $\Delta(\iota_\sigma)\in H_\sigma(S^\sigma_+\wedge S^\sigma_+;\mft)$. 
A basis for this group is readily computed and we see that \[\Delta(\iota_\sigma) = k_1\iota_\sigma\otimes 1 + k_2 1\otimes\iota_\sigma + k_3 a_\sigma(\iota_\sigma\otimes\iota_\sigma),\] for some $k_i\in\F_2$. 
Since $\mathrm{pr}_i\circ\Delta = \id$, we have $k_1 = k_2 = 1$. Notice that $\Delta$ restricted to fixed points 
is the diagonal on a discrete set. Thus, applying geometric fixed points and naturality, one can verify that $k_3 = 1$. 
\end{proof}

Under the identification $\t\cong S^\sigma$, precomposing the action map with the Euler class 
\[S^0_+\wedge X\xrightarrow{a_\sigma\wedge \id_X} S^\sigma_+\wedge X\xrightarrow{\alpha} X\]
corresponds to restricting the action to the Weyl group $\mu_2\subset\t$. Thus, if $\iota_0,\iota_\infty\in H_0(S_+^0;\mft)$ are the generators corresponding to the identity and nonidentity elements in $\mu_2$, respectively, 
then $(a_\sigma)_\star(\iota_0+\iota_\infty)$ is a nonzero element in the kernel of restriction, and so it must be 
$a_\sigma\iota_\sigma$. Then we can define \[\bar{\sigma}_\mu(x) \coloneq \alpha_\star((a_\sigma)_\star(\iota_0+\iota_\infty)\otimes x) = a_\sigma\bar{\sigma}(x).\]
As a consequence of \cref{prop:twistedsigma}, we have \[\bar{\sigma}_\mu(ab) = \bar{\sigma}_\mu(a)b + a\bar{\sigma}_\mu(b) + \bar{\sigma}_\mu(a)\bar{\sigma}_\mu(b).\]
The $d^2$-differential then takes the following form. 
\begin{proposition}\label[proposition]{prop:d2formula}
For any $x\in E^2_{0,V} = H_V(X;\mft)$, we have 
\[d^2 x = \bar{y}\cdot\bar{\sigma}(x) = \bar{y}\cdot\frac{\bar{\sigma}_\mu(x)}{a_\sigma}.\]
\end{proposition}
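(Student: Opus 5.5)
The proof has two parts. First I show that the obstruction class $\delta x$ from the lemma, for $r=1$, equals $\bar{\sigma}(x)=\alpha_\star(\iota_\sigma\otimes x)$. Second, the expression in terms of $\bar{\sigma}_\mu$ follows from the identity $\bar{\sigma}_\mu(x)=a_\sigma\bar{\sigma}(x)$ established just above.

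For $r=1$ the hypotheses of the lemma are automatic: any $x\in H_V(X;\mft)$ is represented by a $C_2$-twisted $\t$-equivariant map $x\colon S(\c)_+\wedge S^V\to H\mft\wedge X$, so we may take $x'=x$. The lemma then says $d^2(x)=\delta x\cdot\bar{y}$. Here $\delta x$ is the restriction of $x$, viewed as a $C_2$-equivariant map, to the stable summand $S^{\rho-1}\wedge S^V$ of $S(\c)_+\wedge S^V$, using the splitting $S(\c)_+\simeq S^{\rho-1}\vee S^0$.

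Next I unwind the adjunction, using $S(\c)=\t\cong S^\sigma$ with $\rho-1=\sigma$. A $\t$-equivariant map out of the free orbit $\t_+\wedge S^V$ is determined by its restriction to the identity element, namely $x$ itself. The full equivariant map is recovered as the composite
\[\t_+\wedge S^V\xrightarrow{\id\wedge x}\t_+\wedge H\mft\wedge X\xrightarrow{\id_{H\mft}\wedge\alpha}H\mft\wedge X,\]
using that $\t$ acts trivially on $H\mft$. I need to check that this identification is $C_2$-equivariant. It is, because complex conjugation fixes $1\in\t$ and $\alpha$ is a map in $\Sp^{C_2}$.

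Restricting this composite to the summand $S^\sigma\wedge S^V$ computes, in homology, $\alpha_\star(\iota_\sigma\otimes x)$. The reason is that the summand inclusion $S^\sigma\to S^\sigma_+$ is dual to the fundamental class $\iota_\sigma$, and its complement is the basepoint summand $S^0$, which carries $x$ itself, matching the term $x\otimes 1$ in the preceding proposition. Hence $\delta x=\bar{\sigma}(x)$. The main care here is identifying the stable splitting of $S(\c)_+\simeq S^\sigma_+$ so that the top summand corresponds to $\iota_\sigma$ rather than to some other class. If the splitting is well defined only up to adding a multiple of the bottom class, that ambiguity changes $\delta x$ by a multiple of $x$ in degree $V+\sigma$. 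Since $H_\star$ has nothing in degree $\sigma$ killing this up to the relevant Künneth identification, I would instead fix the splitting via the collapse map $S^\sigma_+\to S^\sigma$, which makes the choice canonical.

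Finally, I obtain the second expression. Since $\bar{\sigma}_\mu(x)=a_\sigma\bar{\sigma}(x)$, writing $\bar{\sigma}(x)=\bar{\sigma}_\mu(x)/a_\sigma$ is valid as an expression. The division by $a_\sigma$ is understood as naming the class $\bar{\sigma}(x)$, of which $\bar{\sigma}_\mu(x)$ is the $a_\sigma$-multiple.
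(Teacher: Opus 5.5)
Your proposal is correct and follows the same route as the paper. The paper cites Rognes' argument: it takes the $r=1$ case of the geometric obstruction description, identifies $\delta x$ with the circle action on the fundamental class of $S(\mathbb{C})\cong S^\sigma$ (giving $d^2x=\bar{y}\cdot\bar{\sigma}(x)$), and then rewrites this using $\bar{\sigma}_\mu=a_\sigma\bar{\sigma}$. The splitting ambiguity you worry about is harmless simply because $H_\sigma=0$, so $\iota_\sigma$ is the unique nonzero class in $H_\sigma(S^\sigma_+;\mft)\cong\mathbb{F}_2$; no choice via the collapse map is needed.
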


Consider the map $\mu\colon X\to X$ obtained by restricting the action map $\alpha$ to the nontrivial element in the Weyl group. 
From a homology calculation, one can verify that the image of $x\in H_\star(X;\mft)$ under the map induced by $\mu$ is exactly \[\mu_\star(x) = x+\bar{\sigma}_\mu(x).\] 
The formula of \cref{prop:twistedsigma} can then be written as \[\bar{\sigma}(ab) = \bar{\sigma}(a)b + \mu_\star(a)\bar{\sigma}(b).\] So one can think of $\bar{\sigma}$ as a 
Weyl-twisted derivation. 

\begin{remark}
We remark that an alternate proof of \cref{prop:twistedsigma} may be obtained from the fact that $\mu_\star$ acts by ring maps in that context. 
\end{remark}

\subsection{Differentials in the $\mu_2$-spectral sequence.}
We now briefly consider the differentials in the spectral sequence of \cref{sec:gfpss}.

For $Y$ a spectrum with $\mu_2$-action, we write $\mu_\ast \colon H_\ast(Y;\mathbb{F}_2)\to H_\ast(Y;\mathbb{F}_2)$ for the induced action, and 
$\sigma_\mu(x) = x +\mu_\ast(x)$. 

\begin{proposition}\label[proposition]{prop:gfpE4pg}
The $E^4$-page of the $\mathrm{HHFPSS}(Y)$ is given by 
\[E^4_{\ast,\ast}\cong H^\ast(B\mu_2; H_\ast (Y;\mathbb{F}_2)).\] 
\end{proposition}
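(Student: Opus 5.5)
The plan is to show that $E^2 = E^3$ has the shape $z^s\cdot H_\ast(Y;\F_2)$ and that $d^2$ is multiplication by $z$ composed with $\sigma_\mu = 1+\mu_\ast$. Then $E^4 = E^3$ will be the homology of the resulting complex, and we identify that homology with the standard cochain complex computing group cohomology of $\mu_2$.

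\textbf{Step 1 (cellular description of $E^1$).} Use the $\mu_2$-CW structure on $E\mu_2 = S(\mathbb{R}^\infty)$, with one free cell $\mu_2\times D^n$ in each dimension $n$. The indexing in \cref{cons:gfpSStower} groups cells in pairs, but the $d^1$-page of the unregrouped tower is the usual one. It computes $H^\ast$ of the cellular cochain complex
\[\Hom_{\F_2[\mu_2]}\bigl(C_\ast(E\mu_2),H_t(Y;\F_2)\bigr),\]
in which each cochain group is $H_t(Y;\F_2)$ and the coboundaries alternate between $1+\mu_\ast$ and $1-\mu_\ast$. Mod $2$ both of these equal $\sigma_\mu$.

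\textbf{Step 2 (the $d^2$-differential).} Rather than rebuild the indexing, argue as in \cref{SS:d2mu}. A class $x\in E^2_{0,t}$ is represented by a $\mu_2$-equivariant map $S(\mathbb{R})_+\wedge S^t\to H\F_2\wedge Y$. Since $S(\mathbb{R}) = \mu_2$, this map is just $x$. The obstruction to extending it over the next cell $\mu_2\times D^1$, attached along $\partial D^1 \to \mu_2$ by the map $\{\pm1\}\to\{e,\mu\}$, is the difference of its values at the two endpoints. That difference is $x - \mu_\ast x = \sigma_\mu(x)$. In the notation of the section this gives
\[d^2(z^s\cdot x) = z^{s+1}\cdot \sigma_\mu(x).\]
Passing from filtration $0$ to filtration $-2s$ uses the periodicity of the cell structure: the filtration quotients are all shifts of $\mu_2{}_+\wedge S^n$ with the same attaching pattern. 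This is the analogue of the $\bar y$-linearity, and it is the same bookkeeping as in \cref{prop:d2formula}.

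\textbf{Step 3 (computing $E^4$).} Because $E^3 = E^2$ and $d^3=0$ for parity reasons, $E^4$ is the homology of $d^2$ on $\bigoplus_s z^s H_\ast(Y;\F_2)$. That homology is $\ker\sigma_\mu = H_\ast(Y)^{\mu_2}$ in filtration $0$, and $\ker\sigma_\mu/\operatorname{im}\sigma_\mu$ in filtration $-2s$ for $s>0$. Now compare with the periodic resolution $\cdots\xrightarrow{1+\mu}\F_2[\mu_2]\xrightarrow{1+\mu}\F_2[\mu_2]\to\F_2$. It gives $H^0(B\mu_2;M)=M^{\mu_2}$ and $H^s(B\mu_2;M)=\ker(1+\mu)/\operatorname{im}(1+\mu)$ for $s>0$, with $M = H_\ast(Y;\F_2)$. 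Here $z^s$ corresponds to the generator of $H^s(B\mu_2)$, i.e.\ the $s$-th power of the degree-one class. This yields the stated isomorphism $E^4_{\ast,\ast}\cong H^\ast(B\mu_2;H_\ast(Y;\F_2))$.

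\textbf{Main obstacle.} The hardest part is Step 2. The tower in \cref{cons:gfpSStower} collapses each pair of cells ($E\mu_2^{(2n)} = E\mu_2^{(2n+1)}$) into a single filtration step. So the "$d^2$" of this indexing is really the cellular $d^1$ between adjacent cells, and we must check that the $z$-bookkeeping matches the grading $H_{\lfloor s/2\rfloor + t}$ in $E^1$. The plan is to compare directly with the unregrouped cellular filtration, where the spectral sequence is the standard Cartan--Eilenberg/cellular one with $E_2 = H^\ast(B\mu_2;H_\ast Y)$. The regrouping only reindexes pages, so page $2$ of the unregrouped sequence becomes page $4$ here.
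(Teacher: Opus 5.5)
Your proposal is correct and takes the same route as the paper. The paper's proof consists only of your Step 2: the obstruction argument used earlier for $\mathrm{HHFPSS}_{C_2}$, transported to $E\mu_2$, gives $d^2(x) = z\cdot\sigma_\mu(x)$, and the paper leaves implicit the identification of the resulting homology with $H^\ast(B\mu_2;H_\ast(Y;\mathbb{F}_2))$ that you supply in Steps 1 and 3. One small slip: parity gives $E^1 = E^2$ and $E^3 = E^4$, not $E^2 = E^3$ as you wrote twice (that would force $d^2 = 0$), though your actual computation of $E^4$ as the homology of $d^2$ is the right one.
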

\begin{proof}
Following the geometric arguments above, one finds a formula for the $d^2$-differential as 
before $d^2(x) = z\cdot \sigma_\mu(x)$.  
\end{proof}

Note that for $X\in\Sp^{h_{C_2}\t}$, the geometric fixed points of the twisted action map, $\alpha$, gives a $\mu_2$-action map 
on $X^{\Phi C_2}$. It follows that these actions are compatible in the sense that \[\overline{\Phi}(\bar{\sigma}x) = \sigma_\mu\overline{\Phi}(x).\]

\section{Multiplicative structure}\label{Sec:Mult}
\subsection{Multiplication and Leibniz rule}

We now extend \cite[Sec. 4]{BR05} to the Real setting. Before we begin, we discuss the following subtlety:

\begin{warning}
There are different levels of equivariant multiplicative structure which can be present in spectral sequences in equivariant stable homotopy theory. One form directly mirrors classical multiplicative structure: if a spectral sequence arises from a multiplicative (in the classical sense) filtration, then it will admit a Leibniz rule and we can interpret expressions like ``the spectral sequence for $M$ is a module over the spectral sequence for $R$" when $M$ is an $R$-module. In the $C_2$-equivariant setting, there is a richer form of multiplicative structure, which might be called genuine equivariant multiplicative structure, which refines the ordinary multiplicative structure. In a genuine equivariant multiplicative structure, one not only has a Leibniz rule for ordinary products, but also a ``Leibniz rule" for norms, i.e., $C_2$-indexed products. This sort of structure is present in the equivariant slice spectral sequence and produces the ``gold relation" (cf. \cite{HHR17}). 

We will show in this section that our spectral sequence admits a naive multiplicative structure, but we do not expect it to admit a genuine equivariant multiplicative structure. It turns out that this weaker level of multiplicative structure, plus some compatibility with equivariant Dyer--Lashof operations, suffices for our purposes. 
\end{warning}

We will now make precise the multiplicative structure present in the homological parametrized homotopy fixed point spectral sequence.
Let \(X=\colim_n X^{(n)}\) with \(X^{(2n)}= X^{(2n+1)}=S(\mathbb{C}^{n+1})\), so that we have a cofiber sequence:
\[X^{(2n-1)} \to X^{(2n)} \to \mathbb{T}_+ \wedge S^{n\rho}.\]
Let \(Y=X\times X\) be filtered by
\[
Y^{(s)} \;=\; \bigcup_{a+b\le s} X^{(a)} \times X^{(b)}\qquad(s\ge 0).
\]
On the product $Y$, the \(n\)-th filtration layer is obtained from \(Y^{(n-1)}\) by attaching product $C_2$-slice cells
\[
( \mathbb{T} \times D^{a\rho} ) \times (\mathbb{T} \times D^{b\rho}) \cong  \mathbb{T} \times \mathbb{T} \times D^{(a+b)\rho}.
\]
where $a+b = n$

\begin{lemma}\label[lemma]{lemma: filtered diagonal up to homotopy}
The diagonal map $\Delta\colon X\to Y$ is $O(2)$-equivariantly homotopic to a filtered map $d$ satisfying
\(d(X^{(n)})\subset Y^{(n)}\) for all \(n\ge 0\).
\end{lemma}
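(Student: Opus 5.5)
This is an equivariant cellular approximation argument, run cell by cell along the $O(2)$-CW structure of $X = S(\c^\infty)$ from \cref{SS:SSadd}. That structure has one free $\t$-cell $\t\times D^{n\rho}$ in each even dimension $2n$. First I will check that $Y = X\times X$, with the diagonal $O(2)$-action and the filtration $Y^{(s)}$, is an $O(2)$-CW complex whose $s$-skeleton is $Y^{(s)}$. Its cells are the products $\t\times\t\times D^{(a+b)\rho}$. As $O(2)$-spaces these decompose into $\t$-free cells of the form $\t\times D^{(a+b)\rho}\times D^{1}$, where the extra interval comes from the $\t$-coordinate $\t\times\t\to\t\times(\t/\t)$. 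To keep the dimension count right, I will instead use the coarser filtration given in the statement and only require $d(X^{(n)})\subset Y^{(n)}$. Since $X^{(2n)} = X^{(2n+1)}$, it suffices to land in $Y^{(2n+1)}$ for the $2n$-cell, and the doubled filtration index leaves room for the extra $\t$-direction.

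The construction is by induction on $n$. Suppose $\Delta$ has been $O(2)$-homotoped, rel $X^{(2n-3)}$, to a map $d_{n-1}$ with $d_{n-1}(X^{(2k)})\subset Y^{(2k)}$ for all $k<n$. Because $X^{(2n)}$ is obtained from $X^{(2n-2)}$ by attaching the single free cell $\t\times D^{n\rho}$, an $O(2)$-map out of this cell is the same as a $C_2$-map out of $D^{n\rho}$. So the remaining problem is purely $C_2$-equivariant: given the $C_2$-map $(D^{n\rho},\partial D^{n\rho})\to(Y, Y^{(2n-2)})$, compress it rel boundary into $Y^{(2n)}$.

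For this I will use the equivariant Whitehead/cellular approximation theorem. Its hypothesis is that the pair $(Y,Y^{(2n)})$ is suitably connected: $\dim (n\rho)^{e} = 2n$-connected on underlying spaces, and $\dim (n\rho)^{C_2} = n$-connected on $C_2$-fixed points. Here the fixed points of $Y$ are taken for the twisted subgroup $C_2\subset O(2)$ and its conjugates, and the fixed points of $Y^{(2n)}$ under them are unions of products of real spheres $S(\mathbb{R}^{a+1})\times S(\mathbb{R}^{b+1})$. The underlying connectivity follows from the nonequivariant argument of Bruner--Rognes, since the cells of $Y$ outside $Y^{(2n)}$ have real dimension at least $2n+2$. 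The fixed-point connectivity needs a separate check: the cells outside $Y^{(2n)}$ contribute fixed cells of dimension at least $n+1$, and the $\t\times\t$ factor contributes at most the finite set $\mu_2\times\mu_2$ of fixed points. Once the compression is done, the homotopy is extended over all of $X$ by the $O(2)$-homotopy extension property. Passing to the colimit, the infinite concatenation of homotopies is handled as usual, since each homotopy is stationary on the earlier skeleta.

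I expect the main obstacle to be the fixed-point dimension bookkeeping in the connectivity check. Both the fixed-point sets of $Y$ under the various order-two subgroups of $O(2)$ conjugate to $C_2$ and the $\t\times\t$ orbit structure need care, since the reflection subgroups form a circle's worth of conjugates. I would handle this by working in $\Fun_{C_2}(B_{C_2}\t,\underline{\Top}^{C_2})$: a free $O(2)$-cell corresponds to a $C_2$-cell, so the whole question reduces to $C_2$-equivariant cellular approximation for the map of $C_2$-spaces $X/\t\to Y/\t$, where $\t$ acts diagonally on $Y$.
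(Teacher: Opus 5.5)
Your proposal is correct and follows essentially the paper's route. You induct over the free cells $\mathbb{T}\times D^{n\rho}\cong O(2)\times_{C_2}D^{n\rho}$, use adjunction and homotopy extension to reduce to compressing $(D^{n\rho},\partial D^{n\rho})\to(Y,Y^{(2n)})$ rel boundary, and supply the dimension count (the pair is $(2n+1)$-connected on underlying spaces and $n$-connected on $\langle\sigma\rangle$-fixed points) that the paper's general-position ``miss a fixed point and push off'' step leaves implicit; two side remarks are off but harmless: only the underlying and $\langle\sigma\rangle$-fixed points enter after the adjunction, so the worry about conjugate reflections and the detour through $X/\mathbb{T}\to Y/\mathbb{T}$ are unnecessary, and $Y^{(s)}$ is not an $s$-skeleton (it contains cells of dimension $s+2$), while the remark about landing in $Y^{(2n+1)}$ is moot because $Y^{(2n+1)}=Y^{(2n)}$.
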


\begin{proof}
We argue by induction on \(n\). The base case is immediate since $Y^{(0)} = X^{(0)} \times X^{(0)}$, so $\Delta|_{X^{(0)}}$ already lands in $Y^{(0)}$.
Now assume inductively that, for some $2n-1\ge0$, we have constructed a filtered map $d_{2n-1} \colon  X^{(2n-1)} \to Y^{(2n-1)}$ together with an \(O(2)\)-equivariant homotopy
\(h_{2n-1}\colon X^{(2n-1)}\times I\to Y\) from \(\Delta|_{X^{(2n-1)}}\) to \(\iota_{ 2n-1 }\circ d_{ 2n-1 }\), where \(\iota_{2n-1}\colon Y^{(2n-1)}\hookrightarrow Y\) is the inclusion.
Let \(\alpha\colon  \mathbb{T} \times S^{n\rho-1} \to X^{(2n-1)}\) be the attaching map and \(\tilde \alpha\colon \mathbb{T} \times D^{n\rho}\to X\) the characteristic map to obtain $X^{(2n)}$.
We need to homotope $\Delta \tilde \alpha$ so that it lands in $Y^{(2n)}$, compatibly with the homotopy already chosen on the boundary.
This is a local extension problem over the new cell. 
Thus, for the following diagram, we replace \((X^{(2n)},X^{(2n-1)})\) by the pair $(\mathbb T\times D^{n\rho},\mathbb T\times S^{n\rho-1})$ through the characteristic map \(\tilde\alpha\) and its restriction \(\alpha\).
To find the homotopy $h_{2n}$, consider the following diagram:
\[\xymatrix{
\mathbb{T} \times S^{n\rho -1}  \ar[rr] \ar[dd] && \mathbb{T} \times S^{n\rho -1}  \times I \ar[dd]|-\hole \ar[dl]_-{h_{2n-1} (\alpha \times\id)} && \mathbb{T} \times S^{n\rho -1} \ar[ll] \ar[dd]  \ar[dl]_-{d_{2n-1} \alpha}\\
 & Y & & Y^{(2n)} \ar[ll]_(.4){\iota_{2n}} \\
 \t \times D^{n\rho}  \ar[rr]  \ar[ur]^{\Delta \tilde{\alpha}} &&  \mathbb{T} \times D^{n\rho}  \times I \ar@{..>}[ul]_-{h_{2n}} &&  \mathbb{T} \times D^{n\rho}  \ar[ll] \ar@{..>}[ul]_-{d_{2n}}
}\]
We claim the dashed lifting exists.

These dashed maps exist if the following set of $O(2)$-equivariant homotopy classes is trivial:
\[ [\mathbb{T} \times S^{n\rho-1}, \mathrm{Fib} (\iota_{2n})]^{O(2)} = \ast.\]
We may write these homotopy classes as relative homotopy classes:
\[[\mathbb{T} \times S^{n\rho-1}, \mathrm{Fib} (\iota_{2n})]^{O(2)} = [ (\mathbb{T} \times D^{n\rho} , \mathbb{T} \times S^{n\rho-1}) , (Y, Y^{(2n)}) ]^{O(2)}_.\]
Consider a map $f\colon  (\mathbb{T} \times D^{n\rho} , \mathbb{T} \times S^{n\rho-1}) \to (Y, Y^{(2n)})$.
Then the image of $f$ lies in some finite filtration level, say $Y^{(k)}$ for some $k \ge 2n$.
If $k=2n$, we are done, so suppose $k > 2n$. By the equivariant smooth (or simplicial) approximation theorem, there is a map $f'\colon \mathbb{T} \times D^{n\rho} \to Y^{(k)}$ such that $f' = f$ on $ \mathbb{T} \times S^{n\rho-1}$, $f' \simeq f$ (rel $\mathbb{T} \times S^{n\rho-1}$) and $f'$ misses a fixed point $p$ in the interior of the product of $C_2$-slice cells 
\[( \mathbb{T} \times D^{a\rho}) \times ( \mathbb{T} \times D^{b\rho}) \cong \mathbb{T} \times\mathbb{T} \times D^{k\rho},\]
where $a+b = k$.
Therefore, we can deform $f'$ to a map into $Y^{(k-1)}$.
By iterating this procedure, we can homotope $f$ (rel $\mathbb{T} \times S^{n\rho-1}$) so that its image lies in $Y^{(2n)}$.
\end{proof}

\begin{remark}
Our work in Lemma~\ref{lemma: filtered diagonal up to homotopy} shows there is a filtered diagonal map exhibiting $E_{C_2}\TT^{(2\ast)}$ as a homotopy associative coalgebra in the category of $O(2)$-equivariant filtered spaces. 
Applying $(-)_{h_{C_2}\TT}$ to this filtration, we find that $\CP^\ast$ is a homotopy associative coalgebra in the category of filtered $C_2$-spaces, where the $C_2$-action is given by complex conjugation. 
In fact, one can show this coalgebra structure refines to an $\mathbb{E}_{\rho}$-coalgebra, $\CP_{\mathbb{R}}^{\ast}$, in the \category of filtered $C_{2}$-spaces. 
The argument is almost identical to Lurie's from \cite[5.2]{lurieRotationInvarianceAlgebraic2015} but at crucial moments one uses results of Horev \cite{horev2019genuineequivariantfactorizationhomology} and Horev--Klang--Zou \cite{Horev-Klang-Zou}.
Furthermore, it is interesting to note that $\CP^{\ast}_{\mathbb{R}}$ does not seem to admit any further equivariant comultiplicative structure. 


\end{remark}

\begin{construction}
Fix integers $s,s'$ and $r\ge 1$, and set $m\coloneq-s-s'+r-1$, and consider the composition
\begin{align*}
E_{C_2}\t^{(m)} &\xrightarrow{\,d\,} (E_{C_2}\t\times E_{C_2}\t)^{(m)} 
\hookrightarrow E_{C_2}\t^{(m)}\times E_{C_2}\t^{(m)},
\end{align*}
where $E_{C_2}\t\times E_{C_2}\t$ is given the product filtration.
Set
\[
A\coloneq E_{C_2}\t^{(-s+r-1)},\quad
B\coloneq E_{C_2}\t^{(-s'+r-1)},\quad
A_0\coloneq E_{C_2}\t^{(-s-1)},\quad
B_0\coloneq E_{C_2}\t^{(-s'-1)},
\]
and let
\[
U\coloneq(A_0\times E_{C_2}\t^{(m)})\cup(E_{C_2}\t^{(m)}\times B_0).
\]
\begin{figure}[ht]
\centering
\def\aval{20} 
\def\bval{16} 
\def\rval{12}
\pgfmathsetmacro{\mval}{\aval+\bval+\rval-1}

\pgfmathsetlengthmacro{\figsize}{6.2cm}
\pgfmathsetlengthmacro{\unit}{\figsize/(\mval+1.8)}

\begin{tikzpicture}[x=\unit,y=\unit,>=stealth, every node/.style={font=\footnotesize},scale = 0.7]

\pgfmathsetmacro{\xmax}{\mval+1.6}
\pgfmathsetmacro{\ymax}{\mval+1.6}

\draw[->] (0,0) -- (\xmax,0);
\draw[->] (0,0) -- (0,\ymax);

\draw[thick] (0,0) rectangle (\mval,\mval);

\fill[red, fill opacity=0.15]
  (0,0) --
  (\mval,0) --
  (\mval,\bval-1) --
  (\aval-1,\bval-1) --
  (\aval-1,\mval) --
  (0,\mval) -- cycle;

\filldraw[fill=green, fill opacity=0.12, draw=green!50!black]
  (0,0) rectangle (\aval+\rval-1,\bval+\rval-1);

\node[red!70!black] at ({.8*(\aval-1)}, {0.65*(\bval-1+\mval)}) {$U$};

\node[green!60!black] at ({\aval+\rval-1}, {1.05*(\bval+\rval-1)}) {$A\times B$};

\draw[decorate,decoration={brace,mirror,amplitude=4pt}]
  (0,-0.8) -- (\aval-1,-0.8)
  node[midway,below=6pt] {$A_0$};

\draw[decorate,decoration={brace,mirror,amplitude=4pt}]
  (0,-1.7) -- (\aval+\rval-1,-1.7)
  node[midway,below=6pt] {$A$};

\draw[decorate,decoration={brace,amplitude=4pt}]
  (-0.8,0) -- (-0.8,\bval-1)
  node[midway,left=6pt] {$B_0$};

\draw[decorate,decoration={brace,amplitude=4pt}]
  (-1.7,0) -- (-1.7,\bval+\rval-1)
  node[midway,left=6pt] {$B$};

\draw[thick,blue] (\mval,0) -- (0,\mval);
\draw[thick,blue] (\mval-\rval,0) -- (0,\mval-\rval);

\draw (\mval,0) -- ++(0,-3pt) node[below=3pt] {$m$};
\draw (0,\mval) -- ++(-3pt,0) node[left=4pt] {$m$};

\draw (0,\mval-\rval) -- ++(-3pt,0) node[left=4pt] {$-s-s'-1$};

\end{tikzpicture}
\caption{Schematic  of the filtration region.}
\label{fig:filtered-diagonal}
\end{figure}

\cref{fig:filtered-diagonal} shows that \(d(E_{C_2}\t^{(-s-s'-1)})\subset U\) and \(d(E_{C_2}\t^{(m)})\subset (A\times B)\cup U\). Hence \(d\) induces a map of pairs
\[
d\colon \bigl(E_{C_2}\t^{(m)},E_{C_2}\t^{(-s-s'-1)}\bigr)
\longrightarrow
\bigl((A\times B)\cup U,U\bigr).
\]
Passing to quotients gives
\[
\frac{E_{C_2}\t^{(m)}}{E_{C_2}\t^{(-s-s'-1)}} \longrightarrow \frac{ (A\times B)\cup U}{U} \cong \frac{A \times B}{(A \times B)\cap U}.
\]
Since \((A\times B)\cap U=(A_0\times B)\cup(A\times B_0)\), the map of pairs above induces a map
\begin{equation}\label{map_d}
E_{C_2}\t^{-s-s'+r-1}_{-s-s'}
\longrightarrow
E_{C_2}\t^{-s+r-1}_{-s}\wedge E_{C_2}\t^{-s'+r-1}_{-s'}.
\end{equation}
We continue to denote this map by \(d\).
\end{construction}

When $s = -2n$ and $s'= -2n'$ are non-positive and even, and $r=1$, this specializes to
\begin{equation}\label{eqn:diagonal}
d_{n,n'}\colon \t_+ \wedge S^{(n+n')\rho} \to \t_+ \wedge S^{n\rho} \wedge \t_+ \wedge S^{n' \rho},
\end{equation}
which is simply the projection of the map 
\[E_{C_2}\t^{2(n+n')}_{2(n+n')-1}\to (E_{C_2}\t\times E_{C_2}\t)^{2(n+n')}_{2(n+n')-1}\]
 onto a single product $\t$-cell.
In addition to the ordinary diagonal $\Delta\colon \t\to\t\times\t$, we consider the \emph{Weyl-twisted diagonal}
\[\Delta^\mu\colon \t\to\t\times\t,\quad z\mapsto (-z,z).\]
As maps of $\t$-equivariant spaces, $d_{n,n'}$ is homotopic to a suspension of $\Delta$, while $O(2)$-equivariantly, we instead have the following. 

\begin{lemma}\label{lemma:WeylTwistedDiagonal}
We have stable equivalences 
\[ d_{1,0} \simeq\Delta\wedge\id_{S^\rho},\]
up to the appropriate swap map, and
\[ d_{0,1} \simeq\Delta^\mu\wedge\id_{S^\rho}.\]
\end{lemma}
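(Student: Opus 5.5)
We make the maps explicit on the bottom cells and reduce to a comparison of $O(2)$-equivariant maps of spaces.

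\emph{Step 1: explicit models.} Model $E_{C_2}\t^{(2)} = S(\c^2)$, with $E_{C_2}\t^{(0)} = S(\c)$ the circle $\{(z,0)\}$. The top cell is $\t\times D^{\rho}$, with characteristic map $(z,w)\mapsto (z\sqrt{1-|w|^2}\,,\, zw)$ for $w\in D(\c)=D^\rho$. This map is $O(2)$-equivariant: $\t$ acts by rotating $z$, and $C_2$ conjugates both $z$ and $w$. The quotient $E_{C_2}\t^{2}_{2}\cong \t_+\wedge S^\rho$ then identifies the point $(z,w)$ with $(z, w)$ after collapsing $|w|=1$.

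\emph{Step 2: cellular diagonal.} By \cref{lemma: filtered diagonal up to homotopy}, any filtered $O(2)$-equivariant map homotopic to $\Delta$ induces the same map on the quotients $E^{2}_{1}\to (Y)^{2}_{1}$, up to equivariant homotopy. So we may choose a convenient cellular approximation $d$ and compute.

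\begin{itemize}
\item The component $d_{1,0}$ lands in the cell $(\t\times D^\rho)\times(\t\times D^0)$.
\item The component $d_{0,1}$ lands in $(\t\times D^0)\times(\t\times D^\rho)$.
\end{itemize}

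A point on $S(\c)\times S(\c^2)$ lies in the second cell exactly when its first coordinate is a basepoint circle point $z_1$ and its second coordinate is $(z_2\sqrt{1-|w|^2}, z_2 w)$. The natural approximation pushes $(z,w)\in \t\times D^\rho$ to a point in $X^{(0)}\times X^{(2)}$ by radially retracting the first factor $S(\c^2)\setminus\{\text{second circle}\}\to S(\c)$.

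Non-equivariantly, this is the standard argument of \cite{BR05}: $d_{0,1}$ and $d_{1,0}$ are both homotopic to $\Delta\wedge\id$ after suspension. Equivariantly, the subtlety is that the retraction must be $C_2$-equivariant and must move fixed points coherently. On the $C_2$-fixed locus, $z=\pm1$ and $w\in\mathbb{R}$. The retraction of $(z\sqrt{1-w^2}, zw)$ onto the first circle sends the point through $w\to\pm1$, and it collapses to $z$ or to $-z$ depending on the sign side of the sign-direction of $D^\rho$.

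\emph{Step 3: the sign twist.} The key point is to determine, for each component, which $\t$-coordinate appears in each factor. We proceed as follows.

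\begin{itemize}
\item Use the $\t$-equivariance to reduce to maps $S^\rho\to \t_+\wedge S^\rho$. By adjunction, $d_{n,n'}$ is determined by its restriction to $\{1\}\times S^\rho$.
\item Stably, $C_2$-maps $S^\rho\to \t_+\wedge S^\rho \simeq S^\rho\vee S^{\rho+\sigma}$ are detected on underlying maps together with geometric fixed points.
\item On underlying spectra, both components agree with $\Delta\wedge\id$ by the classical computation.
\item On $C_2$-geometric fixed points, $\t^{C_2}=\mu_2=\{\pm1\}$ and $(S^\rho)^{C_2}=S^1$. The map becomes a map $S^1\to (\mu_2)_+\wedge S^1$, that is, a choice of which point $\pm1$ the cell lands on.
\end{itemize}

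For $d_{1,0}$, the first factor retains the cell coordinate $z$, so the fixed point goes to $z$, matching $\Delta$. For $d_{0,1}$, the first factor is the retracted circle point. Tracking $(z\sqrt{1-w^2},zw)$ with $w\to\pm 1$ across the half of the fixed interval where the antipodal identification occurs, the first coordinate becomes $-z$. This matches $\Delta^\mu$, as in the $\mu_2$-twisted formula $\bar\sigma_\mu$ of \cref{prop:twistedsigma}.

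\emph{Main obstacle.} The main obstacle is making the geometric-fixed-point comparison rigorous. We must show that the class in $\pi_0^{C_2}$ of maps $S^\rho\to\t_+\wedge S^\rho$ is determined by underlying data plus the $\mu_2$-component on fixed points; we expect an isotropy-separation argument to suffice. We must also pin down the swap convention, which is why the statement says ``up to the appropriate swap map.''
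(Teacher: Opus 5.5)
Your outline (adjoint to $C_2$-maps out of $S^\rho$, detection by restriction plus geometric fixed points, \cite{BR05} for the underlying maps, a direct computation on fixed points) is the paper's. However, two steps do not hold up as written.

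\textbf{The reduction and the detection step.} The adjunction removes only the free $\t$-factor from the source. The target is still $(\t\times\t)_+\wedge S^\rho$, with $C_2$ conjugating both circles. So the relevant group is $\pi_0^{C_2}\bigl((S^\sigma\times S^\sigma)_+\bigr)\cong\zz^5$, not stable maps into $S^\rho\vee S^{\rho+\sigma}$. Correspondingly, on fixed points the target is $(\mu_2\times\mu_2)_+\wedge S^1$, and you must decide which of the four points $(\pm1,\pm1)$ is hit. The detection statement you leave as an expectation is the real content of the proof, and isotropy separation alone does not give it. It only says that $\ker\Phi$ is the image of $\pi_0$ of the homotopy orbits; you still need restriction to be injective on that image. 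The paper does this with the tom Dieck splitting. The orbit summand is $\mathbb{R}P^\infty_+\vee\mathbb{R}P^\infty_1\vee\mathbb{R}P^\infty_1\vee\mathbb{R}P^\infty_2$, which contributes a single $\zz$ to $\pi_0$. The crush map to $S^0$ carries this $\zz$ isomorphically onto the transfer summand of $\pi_0^{C_2}S^0\cong\zz\oplus\zz$, where $\res(a,b)=a+2b$ and $\Phi(a,b)=a$.

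\textbf{The fixed-point step.} Your Step~2 claim that every filtered approximation of $\Delta$ induces the same map on quotients is false. If $d$ is one such approximation, so is $\tau\circ d$ for the factor swap $\tau$, since $\tau\Delta=\Delta$ and the product filtration is symmetric. But $\tau\circ d$ exchanges the two components: its $(1,0)$-component sends the cell over $z$ to $(z,-z)$, and its $(0,1)$-component sends it to $(z,z)$. The points $(1,1)$ and $(1,-1)$ lie in different components of $(\t\times\t)^{C_2}$, so geometric fixed points distinguish these maps. Hence the lemma holds only for a suitable choice of $d$, which is why the paper fixes one ``without loss of generality'' (\cref{fig:diagonalpic}).

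Your mechanism for the sign also fails as written. The characteristic map $(z,w)\mapsto(z\sqrt{1-|w|^2},zw)$ attaches along $\{0\}\times S(\c)$ rather than $S(\c)\times\{0\}$; it should be $(z,w)\mapsto(zw,z\sqrt{1-|w|^2})$. Radially retracting the first coordinate is undefined where that coordinate vanishes. Where it is defined on your parametrization, it returns $z$, not $-z$.

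What actually produces the twist is the staircase on $S(\mathbb{R}^2)=S(\c^2)^{C_2}$. The $1$-cell $e$ from $1$ to $-1$ is sent first onto $e\times\{1\}$ and then onto $\{-1\}\times e$. So while the second coordinate traverses the cell, the first sits at the far endpoint $-1$. By $\mu_2$-equivariance this gives $(z,z)$ for $d_{1,0}$ and $(-z,z)$ for $d_{0,1}$ on fixed points. Combined with the detection statement and the underlying computation, this yields the lemma.
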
 
\begin{proof}
We emphasize that we work stably throughout this proof and so all spaces appearing denote the corresponding suspension spectra. 

By adjunction, the $O(2)$-equivariant maps $d_{1,0}$ and $d_{0,1}$ are determined up to homotopy by the $C_2$-equivariant maps $S^\rho\to \t_+\wedge\t_+\wedge S^\rho$. From the $C_2$-equivalence $\t\simeq S^\sigma$, we see that such maps are stably classified up to homotopy by 
$\pi_0^{C_2}(S^\sigma\times S^\sigma)_+$. We claim that an element $x$ in this stable homotopy group is determined by 
its restriction $\res(x)\in\pi_0 (S^1\times S^1)_+$ and geometric fixed points $\Phi(x)\in\pi_0(S^0\times S^0)_+$. 

To see this, consider the tom Dieck splitting 
\[(S^\sigma\times S^\sigma)_+^{C_2}\simeq (S^0\times S^0)_+\vee (S^\sigma_+\wedge S^\sigma_+)_{hC_2}.\]
Since $S^\sigma_+\simeq S^0\vee S^\sigma$, the second wedge summand above can be identified with a wedge of stunted projective spaces 
\[(S^\sigma_+\wedge S^\sigma_+)_{hC_2}\simeq \mathbb{R} P^\infty_+\vee \mathbb{R} P_1^\infty\vee\mathbb{R} P_1^\infty \vee\mathbb{R} P_2^\infty.\] 
So we have $\pi_0^{C_2}(S^\sigma\times S^\sigma)_+\cong\zz^5$, where four of the copies of $\zz$ correspond to $\pi_0$ of the geometric fixed points. It remains to analyze the 
last copy of $\zz$. For this, we consider the crush map $p:(S^\sigma\times S^\sigma)_+\to S^0$. Naturality of the tom Dieck splitting implies that the last copy of $\zz$ above maps 
isomorphically onto the second copy of $\zz$ in 
\[\pi_0^{C_2} S^0\cong \pi_0 (S^0 \vee \mathbb{R} P^\infty_+)\cong\zz\oplus\zz.\] Elements $(a,b)\in\pi_0^{C_2}S^0$
are determined by their underlying and geometric fixed points since $\res(a,b) = a+2b$ and $\Phi(a,b)=a$. The claim for $x\in\pi_0^{C_2}(S^\sigma\times S^\sigma)_+$ then follows.

Thus it remains to check that the equivalences in the lemma hold upon restriction to underlying and geometric fixed points. 
The claim on underlying is in the proof of \cite[Proposition~4.1]{BR05}. To check this on geometric fixed points, we note that the $O(2)$-equivariant filtered diagonal $d$ from \cref{lemma: filtered diagonal up to homotopy} induces a $\mu_2$-equivariant filtered diagonal for $S(\mathbb{R}^\infty)$ upon 
taking $C_2$-fixed points. Without loss of generality, we assume that d takes the form shown below in \cref{fig:diagonalpic}. The result is then readily verified. 
\end{proof}

\begin{figure}[H]
\begin{tikzpicture}[
    scale=4,
    >={Stealth[length=2.2mm]},
    line cap=round,
    line join=round
]



\draw[black, line width=0.7pt]
    (0,0) -- (1,0) -- (1,1) -- (0,1) -- cycle;

\draw[black, line width=0.7pt]
    (0,0.5) -- (1,0.5);

\draw[black, line width=0.8pt]
    (0,0) -- (0,1);
\draw[black, line width=0.8pt]
    (0.5,0) -- (0.5,1);
\draw[black, line width=0.8pt]
    (1,0) -- (1,1);

\draw[blue!70!black, line width=0.8pt]
    (0,0) -- (1,1);

\draw[red!70!black, line width=0.8pt]
    (0,0) -- (0.5,0) -- (0.5,0.5) -- (1,0.5) -- (1,1);

\draw[blue!70!black, ->, line width=0.8pt]
    (0.18,0.18) -- (0.32,0.32);
\draw[blue!70!black, ->, line width=0.8pt]
    (0.68,0.68) -- (0.82,0.82);

\draw[red!70!black, ->, line width=0.8pt]
    (0.18,0) -- (0.32,0);
\draw[red!70!black, ->, line width=0.8pt]
    (0.5,0.18) -- (0.5,0.33);
\draw[red!70!black, ->, line width=0.8pt]
    (0.68,0.5) -- (0.83,0.5);
\draw[red!70!black, ->, line width=0.8pt]
    (1,0.68) -- (1,0.83);

\foreach \x/\y in {
    0/0, 0/0.5, 0/1,
    0.5/0, 0.5/0.5, 0.5/1,
    1/0, 1/0.5, 1/1
}{
    \fill[black] (\x,\y) circle (0.015);
}

\node[below] at (0,0) {$1$};
\node[below] at (0.5,0) {$-1$};
\node[below] at (1,0) {$1$};

\node[left] at (0,0) {$1$};
\node[left] at (0,0.5) {$-1$};
\node[left] at (0,1) {$1$};

\end{tikzpicture}
\caption{A deformation (red) of the diagonal (blue) for $S(\mathbb{R}^2)$.}
\label{fig:diagonalpic}
\end{figure}

\begin{proposition}\label[proposition]{Prop:MultStr}
Let $R\in\Sp^{h_{C_2}\t}$ be an $\mathbb{E}_\infty$-algebra. The homological parametrized homotopy fixed point spectral sequence for $R$ is a multiplicative spectral sequence of $\mathcal{A}_\star^{C_2}$-comodule algebras with 
\[E^2_{\ast,\star} = H_\star(R;\mft)\langle \bar{y}\rangle\implies H_\star^c(R^{h_{C_2}\t} ; \mft),\]
 where $H_\star(R;\mft)$ has the Pontryagin product, and the angle brackets indicate that there is a multiplicative relation \[\bar{y}a = \mu_\star(a)\bar{y}.\] 
If the spectral sequence converges strongly, then the isomorphisms $E^\infty_{s,\star}\cong F_{s,\star}/F_{s-1,\star}$ are isomorphisms of 
$\mathcal{A}_\star^{C_2}$-comodule algebras. 
\end{proposition}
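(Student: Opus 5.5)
We follow the strategy of \cite[Sec.~4]{BR05}, using the filtered diagonal of \cref{lemma: filtered diagonal up to homotopy} to pair the towers. The multiplication on $R$ together with the filtered map $d$ from \eqref{map_d} gives, for all $s,s'$ and $r\geq 1$, pairings of $C_2$-spectra
\[
F\bigl(E_{C_2}\t^{-s+r-1}_{-s},R\bigr)^\t\wedge F\bigl(E_{C_2}\t^{-s'+r-1}_{-s'},R\bigr)^\t\to F\bigl(E_{C_2}\t^{-s-s'+r-1}_{-s-s'},R\bigr)^\t .
\]
First we smash the two $\t$-fixed mapping spectra into $F(-\wedge-,R\wedge R)^{\t\times\t}$. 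Then we restrict along the diagonal $\t\subset\t\times\t$, precompose with $d$, and postcompose with $\mu$. We smash these with the multiplication of $H\mft$ and take Bredon homology. The result is a compatible system of pairings of the exact couples, in the sense of the multiplicative spectral sequence formalism used by Bruner--Rognes (compatible with the connecting maps because $d$ is a map of pairs for every $r$). The standard argument then yields pairings $E^r\otimes E^r\to E^r$ satisfying the Leibniz rule, compatible with the pairing on $H^c_\star(R^{h_{C_2}\t};\mft)$. The $\mathcal{A}^{C_2}_\star$-comodule structure is preserved because all maps come from maps of $C_2$-spectra, and the $H\mft$-homology Künneth pairing is a comodule map. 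Associativity and unitality on $E^2$ follow from the homotopy coassociativity of $d$ noted in the remark after \cref{lemma: filtered diagonal up to homotopy}.

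Next we identify the product on $E^1=E^2$. For even $s=-2n$ and $s'=-2n'$ with $r=1$, the pairing is induced by $d_{n,n'}$ from \eqref{eqn:diagonal}. By adjunction, $F(\t_+\wedge S^{n\rho},R)^\t\simeq\Sigma^{-n\rho}R$. On filtration zero, $d_{0,0}$ is the diagonal of $\t$, so the product is the Pontryagin product on $H_\star(R;\mft)$. In the example $R=S$, the spectral sequence is $H_\star[\bar y]$ by \cref{example:sphere}, so powers of $\bar y$ multiply as expected. The essential point is then to compute the products $\bar y\cdot a$ and $a\cdot\bar y$, which come from $d_{1,0}$ and $d_{0,1}$. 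By \cref{lemma:WeylTwistedDiagonal}, $d_{1,0}\simeq\Delta\wedge\id_{S^\rho}$, so $a\bar y$ is $a$ times the generator with no twist. By contrast, $d_{0,1}\simeq\Delta^\mu\wedge\id_{S^\rho}$. Under the adjunction identification, the factor on the left gets precomposed with translation by $-1\in\mu_2$, i.e.\ with the Weyl action $\mu$. This gives $\bar y a=\mu_\star(a)\bar y$, using the formula $\mu_\star(x)=x+\bar\sigma_\mu(x)$ from \cref{SS:d2mu}. General monomials $\bar y^n a$ then follow by iterating, using the associativity of the pairing.

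Finally, for the convergence statement, the filtration $F_{s,\star}$ is multiplicative, $F_s\cdot F_{s'}\subset F_{s+s'}$, because the pairing on the tower is filtered. Under strong convergence, the identification $E^\infty_{s}\cong F_s/F_{s-1}$ is induced by the same pairings, so it is multiplicative and comodule-compatible.

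I expect the main obstacle to be the identification of $\bar y a$ via $d_{0,1}$. One has to check carefully that the adjunction equivalence $F(\t_+\wedge S^{\rho},R)^\t\simeq\Sigma^{-\rho}R$ turns precomposition with $\Delta^\mu$ into exactly $\mu_\star$ on the correct factor, rather than on the other factor or with an extra sign. I would first verify this on underlying spectra, where it is trivial since $\mu$ is homotopic to the identity nonequivariantly. I would then verify it on geometric fixed points using the $\mu_2$-equivariant picture in \cref{fig:diagonalpic}, and combine the two checks in the manner of the proof of \cref{lemma:WeylTwistedDiagonal}.
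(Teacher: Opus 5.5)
Your outline follows the paper's route: the strategy of \cite[Sec.~4]{BR05}, with \cref{lemma: filtered diagonal up to homotopy} and \cref{lemma:WeylTwistedDiagonal} as the new inputs and the pairings $\varphi_r$ built from \eqref{map_d}. However, the step that carries the real content is only asserted. That $d$ is a map of pairs for every $r$ makes the $\varphi_r$ compatible with the maps $\eta$. It says nothing about how the connecting map $\delta$ of $E_{C_2}\t^{(\ast)}$ interacts with the pairing. For $d^r$ to be a derivation, the Bruner--Rognes formalism needs the Cartan--Eilenberg system to be multiplicative:
\[\delta\bigl(\varphi_r(z\otimes z')\bigr)=\varphi_1(\delta z\otimes\eta z')+\varphi_1(\eta z\otimes\delta z').\]
Equivalently, $d\circ\delta$ must split as $(\delta\wedge\eta)\circ d+(\eta\wedge\delta)\circ d$.

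The paper checks this by hand. It reduces to $s,s',r$ even, passes by adjunction to $C_2$-maps out of $S^{\frac{-s-s'+r}{2}\rho-1}$, and splits the target $E_{C_2}\t^{-s+r-1}_{-s}\wedge E_{C_2}\t^{-s'+r-1}_{-s'}$ into four representation spheres.
\begin{enumerate}
\item The bottom summand dies because each composite factors through consecutive maps in a cofiber sequence.
\item The two edge summands are matched as in \cref{lemma:WeylTwistedDiagonal}.
\item The top summand $S^{(\frac{-s-s'}{2}+r)\rho-2}$ vanishes for degree reasons only when $r>2$. For $r=2$ the relevant group is $\pi^{C_2}_{-\sigma}S$, which contains $a_\sigma$. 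This is why the paper treats $d^2$ separately, via \cref{prop:twistedsigma} and the twisted relation on $E^2$.
\end{enumerate}
So ``the standard argument'' is not automatic here.

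You could also supply this within your own set-up. Since $d$ is strictly filtered into $Y^{(\ast)}$, naturality lets $d\circ\delta$ factor through the connecting map of the product filtration. On $Y^{(m+1)}/Y^{(m)}\simeq\bigvee_{a+b=m+1}E_{C_2}\t^{a}_{a}\wedge E_{C_2}\t^{b}_{b}$, only the summands $(a,b)=(-s+r,-s')$ and $(-s,-s'+r)$ survive projection to $(A\times B)/\bigl((A_0\times B)\cup(A\times B_0)\bigr)$. They contribute $\delta\wedge\eta$ and $\eta\wedge\delta$. The intermediate product cells lie in $A\times B$, so the projection extends over them. The remaining cells lie in $(A_0\times E_{C_2}\t)\cup(E_{C_2}\t\times B_0)$, which goes to the basepoint. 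Either way, this argument has to be given; it is the heart of the proposition.

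Second, the point you single out as the main obstacle is formal, and the check you propose for it would not work. Since $\t$ acts on $F(\t_+,H\mft\wedge R)$ by conjugation, a $\t$-fixed map $f$ satisfies $f(-z)=\gamma f(z)$, where $\gamma$ is the action of $-1\in\mu_2$. Hence precomposition with translation by $-1$ equals $(\id_{H\mft}\wedge\gamma)_\ast$ on $F(\t_+,H\mft\wedge R)^\t$. Combined with $\Delta\simeq((-1\cdot)\wedge\id_\t)\circ\Delta^\mu$, this gives $\bar y a=\mu_\star(a)\bar y$, which is exactly the paper's argument.

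Restriction and geometric fixed points, by contrast, do not jointly detect maps of $C_2$-spectra or classes in $H_\star(R;\mft)$. Negative-cone classes, such as $H_\star$-multiples of $\theta$, are invisible to both. The detection argument in \cref{lemma:WeylTwistedDiagonal} is special to the group $\pi_0^{C_2}(S^\sigma\times S^\sigma)_+$ and does not extend to a general $R$.

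A minor point: in the paper's convention $d_{i,j}$ computes $E^2_{-2i}\otimes E^2_{-2j}\to E^2_{-2(i+j)}$, so $d_{1,0}$ governs $\bar y\cdot a$, not $a\bar y$. This is harmless since $\mu_\star^2=\id$.
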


\begin{proof}
The overall strategy of the proof follows that of Bruner--Rognes \cite{BR05}, with the main additional inputs being the two lemmas above.
For completeness, we recall the argument, emphasizing the points where the additional structure enters in this genuine $C_2$-equivariant context.

We first establish the multiplicative relation on the $E^2$-page. 
Letting $m$ denote the multiplication on $H\mft \wedge R$, the multiplication on the $E^2$-page is defined by the composite
\begin{equation*}
\begin{split} 
E^2_{-2i,V}\otimes E^2_{-2j,V'}\cong & \pi_{V-i\rho}F(\t_+\wedge S^{i\rho}, H\mft \wedge R)^\t\otimes \pi_{V'-j\rho}F(\t_+\wedge S^{j\rho},H\mft \wedge R)^\t\\
&\qquad\xrightarrow{-\wedge-}\pi_{V+V'-n\rho}F(\t_+\wedge S^{i\rho}\wedge\t_+\wedge S^{j\rho},H\mft \wedge R\wedge H\mft \wedge R)^\t\\
&\qquad\qquad\qquad\xrightarrow{d_{i,j}^\ast m_\ast}\pi_{V+V'-n\rho}F(\t_+\wedge S^{n\rho},H\mft \wedge R)^\t  \cong E_{-2n,V+V'}^2,
\end{split}
\end{equation*}
for $i+j=n$. Since $\t$ acts by conjugation on function spaces, 
\[(\id_{H\mft }\wedge \gamma)_\ast\simeq (-1\cdot)^\ast\colon F(\t_+,H\mft \wedge R)^\t\to F(\t_+,H\mft \wedge R)^\t,\] where 
$\gamma\colon R\to R$ denotes the action by the nontrivial element of $\mu_2\subset\t$. 
Thus, since $\Delta\simeq ((-1\cdot) \wedge\id_{\t})\circ \Delta^\mu$, we have a commutative diagram 
\begin{center}
\begin{tikzcd}[column sep = 5em]
E^2_{0,V}\otimes E^2_{-2,V'}\ar[r,"(\id_{H\mft }\wedge\gamma)_\ast\otimes 1"]\ar[d,"\text{swap}", swap] & E^2_{0,V}\otimes E^2_{-2,V'}\ar[d] \\
E_{-2,V'}^2\otimes E_{0,V}^2\ar[r]  & E_{-2,V+V'}^2,
\end{tikzcd}
\end{center}
where the unlabeled arrows are the multiplication maps defined as above. This proves the multiplicative relation on $E^2$. 

To organize the rest of the multiplicative structure, we define a multiplicative Cartan--Eilenberg system of $RO(C_2)$-graded abelian groups by
\[H(a,b) \coloneq H_\star(F(E_{C_2}\t^{b-1}_a,R)^\t;\mft)\]
for integers $a\le b$.
The structure maps $\delta$ and $\eta$ of this system are induced by the filtration of $E_{C_2}\t$, and we note that $E^1_s = H(-s,-s+1)$.

Applying $F(-,R)^\t$ and $\mft$-homology to the map~\eqref{map_d}, we obtain pairings
\[
\varphi_r \colon
H(-s,-s+r) \otimes H(-s',-s'+r)
\longrightarrow
H(-s-s',-s-s'+r)
\]
for all integers $s,s'$ and $r\ge 1$.

For $s=-2n$, $s'=-2n'$ and $r=1$, this yields a homomorphism
\[
\varphi_1 \colon E^1_{-2n} \otimes E^1_{-2n'}
\longrightarrow
E^1_{-2n-2n'}
\]
which sends
\[
\bar{y}^n \cdot x \otimes \bar{y}^{n'} \cdot x'
\longmapsto
\bar{y}^{n+n'} \cdot \mu_\ast(x \otimes x'),
\]
where $\mu_\ast$ denotes the Pontryagin product on $H_\star(R;\mft)$, under the identification $E^1_{-2n,\star} = \bar{y}^n \cdot H_\star (R;\mft)$ from (\ref{ssE1}).
Under this identification, it follows that $E^1 = E^2$, equipped with $\varphi_1$, is isomorphic to the tensor product $H_\star[\bar{y}] \otimes_{H_\star} H_\star(R;\mft)$.

To show that each $E^{r+1}$ inherits an algebra structure from $E^r$, we show that the differentials $d^r$ are derivations. For $r = 2$, this follows from \cref{prop:twistedsigma} and Weyl-twisted multiplication on the $E^2$-page proven above. 
To show this for $r>2$, it suffices to verify that this Cartan--Eilenberg system is multiplicative.
Explicitly, the pairings $\varphi_r$ must satisfy
\[\delta(\varphi_r(z\otimes z')) = \varphi_1 (\delta(z) \otimes \eta(z')) + (-1)^{|z|} \varphi_1 (\eta(z) \otimes \delta(z'))\]
in $E^1_{s+s'-r}$, for $z\in H(-s,-s+r)$ and $z' \in H(-s', -s'+r)$, where $|z|$ is the trivial representation  degree of $z$.
Here
\[ \delta \colon H(-s,-s+r) \longrightarrow H(-s+r,-s+r+1)=E^1_{s-r} \]
denotes the degree $(-1)$ homomorphism induced by the stable connecting map
\[ \delta \colon \Sigma^{-1} E_{C_2}\t^{-s+r}_{-s+r} \longrightarrow E_{C_2}\t^{-s+r-1}_{-s} \]
associated to the triple
\[ \left(E_{C_2}\t^{(-s+r)},\,E_{C_2}\t^{(-s+r-1)},\,E_{C_2}\t^{(-s-1)}\right). \]
Similarly,
\[ \eta \colon H(-s,-s+r) \longrightarrow H(-s,-s+1) \]
is the homomorphism induced by the inclusion
\[ \eta \colon E_{C_2}\t^{-s}_{-s} \longrightarrow E_{C_2}\t^{-s+r-1}_{-s}. \]

Phrased more geometrically, the multiplicativity condition amounts to showing that, in the diagram below, the composite along the diagonal is $O(2)$-equivariantly homotopic to the sum of the two outer composites:
\[\xymatrix@C=0em@R=1em{
\Sigma^{-1} E_{C_2}\t^{-s-s'+r}_{-s-s'+r} \ar[dr]^{\delta} \ar[rr]^-d \ar[dd]_-{(-1)^{\lfloor s/2\rfloor} d}& & \Sigma^{-1} E_{C_2}\t^{-s+r}_{-s+r} \wedge E_{C_2}\t^{-s'}_{-s'} \ar[dd]^{\delta\wedge\eta} \\
&E_{C_2}\t^{-s-s'+r-1}_{-s-s'}  \ar[dr]^{d}  & \\
E_{C_2}\t^{-s}_{-s} \wedge \Sigma^{-1} E_{C_2}\t^{-s'+r}_{-s'+r} \ar[rr]^-{\eta \wedge \delta}& & E_{C_2}\t^{-s+r-1}_{-s} \wedge E_{C_2}\t^{-s'+r-1}_{-s'.} 
}\]

As in the classical argument, it suffices to consider the case in which $s$, $s'$, and $r$ are all even, with $s$ and $s'$ non-positive.
In this case, the common source of the $O(2)$-equivariant stable maps under comparison is then $\Sigma^{-1} O(2)_+ \wedge_{C_2} S^{\tfrac{-s-s'+r}{2} \rho}$.
By adjunction,  the problem reduces to comparing $C_2$-equivariant maps
from the representation sphere $S^{\tfrac{-s-s'+r}{2} \rho-1}$ to the target
\[E_{C_2}\t^{-s+r-1}_{-s} \wedge E_{C_2}\t^{-s'+r-1}_{-s'} \simeq \left(S^{ \tfrac{-s+r}{2}\rho-1} \vee S^{\tfrac{-s}{2}\rho}\right) \wedge \left(S^{ \tfrac{-s'+r}{2}\rho-1} \vee S^{\tfrac{-s'}{2}\rho}\right).\]
The projection to the summand $S^{\tfrac{-s}{2}\rho} \wedge S^{\tfrac{-s'}{2}\rho}$ in the target is trivial for each of the three maps, since in each case the composite factors through two subsequent maps in a cofiber sequence.
The projection to $S^{(\frac{-s-s'}{2}+r)\rho-2}$ is necessarily trivial for $r>2$, which is sufficient, as noted above. 
The projection to the remaining summands
\[E_{C_2}\t^{(-s+r -1)} \wedge \Sigma E_{C_2}\t^{(-s' -1)} = S^{\tfrac{-s+r}{2}\rho -1} \wedge S^{\tfrac{-s'}{2}\rho}\]
and
\[\Sigma E_{C_2}\t^{(-s -1)} \wedge  E_{C_2}\t^{(-s'+r -1)} = S^{\tfrac{-s}{2}\rho } \wedge S^{\tfrac{-s'+r}{2}\rho-1}\]
agree as required, by the same type of homotopy as in \cref{lemma:WeylTwistedDiagonal}.

Lastly, the argument given at the bottom of \cite[pp. 670]{BR05} shows that when Boardman's obstruction group $RE^{\infty}_{\ast,\star}$ vanishes, the isomorphisms $F_s/F_{s-1} \cong E^{\infty}_s$ identify the associated graded algebra induced by the product on $H^c_\star(R^{h_{C_2}\t} ;\mft)$ with the algebra structure on the $E^{\infty}$-term. 
%
%
%
\end{proof}

The homological parametrized homotopy fixed point spectral sequence for any $C_2$-spectrum with twisted $\t$-action will be a module over the spectral sequence for the $C_2$-equivariant sphere spectrum with trivial $\t$-action. The latter spectral sequence collapses at $E^2$, so we have the following equivariant analogue of \cite[Lem. 4.3]{BR05}.

\begin{lemma}\label[lemma]{lemma:ystructure}
Let $X$ be any $C_2$-spectrum with twisted $\t$-action. 
\begin{enumerate}

\item The differentials in the homological parametrized homotopy fixed point spectral sequence converging to $H^c_\star(X^{h_{C_2}\t};\mft)$ satisfy the relation
$$d^{2r}(\bar{y}^n \cdot x) = \bar{y}^n \cdot d^{2r}(x)$$
for all $x \in E^{2r}_{0,\star} \subset H_\star(X;\mft)$, $2r \geq 2$, and $n \geq 0$. 

\item Each class in $E^{2r}_{-2n,V}$ has the form $\bar{y}^n \cdot x$ for a class $x \in E^{2r}_{0,V} \subset H_V(X;\mft)$. Hence the spectral sequence is completely determined by the differentials that originate in filtration zero. 

\item The $\bar{y}$-torsion in $E^{2r}_{\ast,\star}$ has height strictly less than $r$, and is concentrated in filtrations $-2r+4 \leq s \leq 0$. 

\end{enumerate}
\end{lemma}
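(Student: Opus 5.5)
The plan is to follow the proof of \cite[Lem.~4.3]{BR05}, using that $\mathrm{HHFPSS}_{C_2}(X)$ is a module over $\mathrm{HHFPSS}_{C_2}(S)$. Here $S$ is the $C_2$-equivariant sphere with trivial twisted $\t$-action.

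\emph{Module structure.} The coaction map $X\wedge S\to X$ combined with the filtered diagonal $d$ of \cref{lemma: filtered diagonal up to homotopy} gives pairings of towers
\[F(E_{C_2}\t^{(a)}_+,X)^\t\wedge F(E_{C_2}\t^{(b)}_+,S)^\t\to F(E_{C_2}\t^{(a+b)}_+,X)^\t.\]
I would build these exactly as the pairings $\varphi_r$ in the proof of \cref{Prop:MultStr}, using the maps \eqref{map_d}. The verification that this Cartan--Eilenberg pairing is multiplicative is the same as before. That argument only used the geometry of $E_{C_2}\t$, not the ring structure on $R$, so it transfers. Consequently every $d^{2r}$ is a derivation with respect to the module action of $E^{2r}(S)$.

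By \cref{example:sphere}, $E^{2r}(S)=H_\star[\bar y]$ for all $r$, with all differentials zero. Since $\t$ acts trivially on $S$, the Weyl twist $\mu_\star$ is the identity there, so no twisting relation enters. The Leibniz rule then gives
\[d^{2r}(\bar y^n\cdot x)=d^{2r}(\bar y^n)\cdot x+\bar y^n\cdot d^{2r}(x)=\bar y^n\cdot d^{2r}(x),\]
which is part (1). The step I expect to need the most care is checking that this module pairing really satisfies the Leibniz identity at every page $r$, not just on $E^2$. As in \cref{Prop:MultStr}, it reduces to the even case. It also helps that on $E^1$, multiplication by $\bar y$ is an isomorphism $E^1_{-2n,V}\cong E^1_{-2n-2,V}$, since both are identified with $H_V(X;\mft)$.

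\emph{Parts (2) and (3).} I would prove these by induction on $r$, as in Bruner--Rognes. On $E^2$, every class is $\bar y^n x$ with $x\in E^2_{0,V}$. Assume inductively that every element of $E^{2r}_{-2n}$ has the form $\bar y^n x$ with $x$ a cycle up to $E^{2r}$. By (1), $d^{2r}$ commutes with multiplication by $\bar y^n$. So the cycles in filtration $-2n$ are $\bar y^n$ times cycles in filtration $0$, modulo $\bar y$-torsion. The boundaries in filtration $-2n$ are $\bar y^{n-r}d^{2r}(x)$ when $n\ge r$, and zero when $n<r$. Hence every class in $E^{2r+2}_{-2n}$ is still represented as $\bar y^n x$ with $x$ surviving from filtration zero, which proves (2).

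For (3), the same bookkeeping shows that $\bar y^n x$ becomes zero exactly when $\bar y^n x$ is hit by a differential from filtration $-2n+2k$ with $k\le n$. Tracking the page on which this first happens shows the following. A class killed by $d^{2k}$ is $\bar y$-torsion of height less than $k$ at the page where it is born. On $E^{2r}$ only $d^{2k}$ with $k<r$ have acted, so torsion has height below $r$. Moreover, a torsion class must lie below the filtration where $\bar y$-multiples start being hit, which gives filtrations $-2r+4\le s\le 0$.
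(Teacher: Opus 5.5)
Your proposal is correct and follows the paper. The paper likewise deduces the lemma from two facts: $\mathrm{HHFPSS}_{C_2}(X)$ is a module over the collapsing spectral sequence $H_\star[\bar{y}]$ of the sphere with trivial action, and (2) and (3) then follow by the Bruner--Rognes induction. The one step to tighten is your ``modulo $\bar{y}$-torsion'' in (2): if $\bar{y}^n x$ is a $d^{2r}$-cycle, then $d^{2r}(x)\in E^{2r}_{-2r,\star}$ is $\bar{y}$-torsion, and only (3) on $E^{2r}$ (torsion lives in filtrations $\geq -2r+4$) forces $d^{2r}(x)=0$, so you must prove (2) and (3) together in a single induction on $r$.
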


\subsection{Interaction with equivariant Dyer--Lashof operations}

Wilson \cite[Def. 4.3]{Wil17} defined equivariant Dyer--Lashof operations 
\[Q^{k\rho} \colon  H_{V} (X;\mft) \to H_{V+k\rho} (X;\mft), \quad Q^{k\rho-1} \colon  H_{V}(X;\mft) \to H_{V+k\rho-1} (X;\mft) \]
with various convenient properties \cite[Thm. 4.4]{Wil17}. These equivariant Dyer--Lashof operations are natural with respect to maps of $C_2$-$\mathbb{E}_\infty$-algebras. We will show that they interact nicely with our spectral sequence provided that $R$ is a $C_2$-$\mathbb{E}_\infty$-algebra. To begin, note that by Proposition~\ref{proposition: mapping spectra E_oo} the filtered $C_2$-twisted $\TT$-spectrum
\[
F(E_{C_2}\TT^{(\ast)}_{+},R) 
\]
refines to a filtered object in $\CAlg^{C_2}(\Sp^{h_{C_2}\TT})$. 

\begin{proposition}\label[proposition]{prop:diffsoperations}
Let $R$ be a $C_2$-$\mathbb{E}_\infty$-algebra in $C_2$-spectra with twisted $\t$-action and let $E^r(R)$ be its homological parametrized homotopy fixed point spectral sequence. Then for each element $x \in E^{2r}_{0,V}(R) \subset H_V(R)$, we have the relation
$$d^{2r}( Q^{k\rho-\epsilon}(x)) =  Q^{k\rho-\epsilon} (d^{2r}(x))$$
for every positive integer $k$ and $\epsilon \in \{0,1\}$. 
\end{proposition}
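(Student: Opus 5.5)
We follow Bruner--Rognes \cite[Prop.~5.1]{BR05}, using the geometric description of $d^{2r}$ from \cref{Sec:Diff} together with naturality of Wilson's operations for maps of $C_2$-$\mathbb{E}_\infty$-algebras.

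\textbf{Step 1: the restriction map is a map of $C_2$-$\mathbb{E}_\infty$-algebras.} By \cref{proposition: mapping spectra E_oo}, $F(S(\c^r)_+,R)$ is a $C_2$-$\mathbb{E}_\infty$-algebra in $\Sp^{h_{C_2}\t}$, naturally in the $C_2$-twisted $\t$-space. Taking parametrized $\t$-fixed points is a $C_2$-limit, hence $C_2$-lax symmetric monoidal. So $F(S(\c^r)_+,R)^\t$ is a $C_2$-$\mathbb{E}_\infty$-ring in $\Sp^{C_2}$. Moreover, the restriction
\[
F(S(\c^r)_+,R)^\t\to F(S(\c)_+,R)^\t\simeq R
\]
and the forgetful map $\varphi$ of \eqref{eqn:phi} are maps of $C_2$-$\mathbb{E}_\infty$-rings. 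The second holds because $\varphi$ is induced by the inclusion of the trivial subgroup.

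\textbf{Step 2: lift $Q(x)$.} Write $Q=Q^{k\rho-\epsilon}$. Choose a lift $x'\in H_V(F(S(\c^r)_+,R)^\t;\mft)$ of $x$, which exists because $x\in E^{2r}_{0,V}$. By naturality, $Q(x')$ lifts $Q(x)$. In particular $Q(x)$ survives to $E^{2r}$.

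\textbf{Step 3: apply the Proposition of \cref{Sec:Diff}.} That proposition gives
\[
\nu_\star^{-1}\varphi_\star(x')=x\otimes1+\delta x\otimes\iota_{r\rho-1},
\]
and similarly
\[
\nu_\star^{-1}\varphi_\star(Q x')=Qx\otimes1+\delta(Qx)\otimes\iota_{r\rho-1}.
\]
Since $\varphi$ is multiplicative, $\varphi_\star(Qx')=Q(\varphi_\star x')$. It therefore suffices to compute $Q$ on $H_\star(R\wedge D(S(\c^r)_+);\mft)$, with the cup-product ring structure on the dual factor, via the equivariant Cartan formula \cite[Thm.~4.4]{Wil17}:
\[
Q(x\otimes1+\delta x\otimes\iota)=Qx\otimes1+\sum Q^{a}(\delta x)\otimes Q^{b}(\iota)+\cdots.
\]
We must show that the only surviving term in the $\iota_{r\rho-1}$-component is $Q(\delta x)\otimes\iota_{r\rho-1}$. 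This uses $Q^{0}\iota=\iota$ and that the operations of positive degree on $\iota_{r\rho-1}$ vanish, because $H^{-\star}(S^{r\rho-1}_+)$ is $H_\star\{1,\iota\}$ and they would land in degrees where the group is zero. Comparing coefficients of $\iota_{r\rho-1}$ then gives $\delta(Qx)=Q(\delta x)$, that is, $d^{2r}(Qx)=Q(d^{2r}x)$.

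\textbf{Main obstacle.} The hard step is the Cartan formula computation. Wilson's Cartan formula in $RO(C_2)$-grading involves extra terms weighted by $a_\sigma$ and $u_\sigma$, coming from the $C_2$-equivariant cohomology of the sphere $S^{r\rho-1}$ with its nontrivial fixed points. One must check that the operations on $\iota_{r\rho-1}$, and the cross terms, contribute nothing in the relevant degree. We would first try to do this by restricting to underlying and geometric fixed points, using \cref{lemma:BWLemma}. On the underlying level this reduces to the Bruner--Rognes computation. On geometric fixed points, $D(S^{r-1}_+)$ is a trivial module, and Wilson's operations become the nonequivariant ones there.
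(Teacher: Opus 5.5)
Your outline is the paper's argument, following the template of Bruner--Rognes' Proposition~4.2. Since $\varphi$ and $\nu$ are maps of $C_2$-$\mathbb{E}_\infty$-rings, you may compute $Q(x\otimes 1+\delta x\otimes\iota_{r\rho-1})$ with Wilson's Cartan formula and read off the $\iota_{r\rho-1}$-coefficient. You use multiplicativity of $\nu$ implicitly when passing to $H_\star(R\wedge D(S(\mathbb{C}^r)_+))$; the paper states it explicitly. The two arguments differ only in how the operations on $\iota_{r\rho-1}$ are controlled. The paper identifies the operations on $H_\star(D(S^{r\rho-1}_+))$ with Steenrod operations on $H^{-\star}(S^{r\rho-1}_+)$, which act trivially on the fundamental class.

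Your replacement for that step is misstated. It is not true that the positive-degree operations on $\iota_{r\rho-1}$ land in zero groups. Since $|\iota_{r\rho-1}|=1-r\rho$, the class $Q^{(r-1)\rho+\sigma}(\iota_{r\rho-1})$ lies in degree $0$, and $H_0(D(S(\mathbb{C}^r)_+))=\F_2\cdot 1\neq 0$. Already nonequivariantly, $Q^{2r-1}\iota$ lands in $H_0=\F_2$; it vanishes because $Q^i=Sq^{-i}$, not for degree reasons.

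What degrees do give is weaker but sufficient. The $\iota_{r\rho-1}$-coefficient of $Q^b(\iota_{r\rho-1})$ lies in $H_b$. From the positive and negative cones one checks that $H_{k\rho}=0$ for $k\neq 0$ and $H_{k\rho+\sigma}=0$ for all $k$. Every term of the Cartan expansion other than $Q(\delta x)\otimes Q^0(\iota_{r\rho-1})$ involves some $Q^b(\iota_{r\rho-1})$ with $b\neq 0$. This includes the extra $a_\sigma$- or $u_\sigma$-weighted cross terms, which carry a factor $Q^{j\rho+\sigma}(\iota_{r\rho-1})$. Since you only compare $\iota_{r\rho-1}$-coefficients, this yields $\delta(Qx)=Q(\delta x)$. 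The coefficient $1$ of $\iota_{r\rho-1}$ in $Q^0(\iota_{r\rho-1})$ follows by restricting to the underlying.

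So your ``main obstacle'' is already handled by the corrected degree count. This route avoids the equivariant Dyer--Lashof/Steenrod identification that the paper invokes without proof. The cost is that it only controls the $\iota_{r\rho-1}$-coefficients, but that is all the comparison needs.

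Your proposed fallback would not work as stated. The maps $\res$ and $\overline{\Phi}$ do not jointly detect elements of $H_\star$-modules; for example, $\theta$ is killed by both. Also, \cref{lemma:BWLemma} is a criterion for bases, not for equality of elements.
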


\begin{proof}
The proof is similar to that of \cite[Proposition~4.2]{BR05}, although there are a few points worth mentioning. In virtue of the $C_{2}$-$\mathbb{E}_{\infty}$-algebra structures we built, the forgetful map $\varphi$ in (\ref{eqn:phi}) and the canonical $C_2$-equivariant map $\nu$ in (\ref{eqn:nu}) are both maps of $C_2$-$\mathbb{E}_\infty$-algebras, i.e.,  they commute with the structure maps described in the previous proof, and thus are compatible with equivariant Dyer--Lashof operations. Second, the Cartan formula holds for Wilson's equivariant Dyer--Lashof operations (\cite[Cor. 1.3.2]{Wil19}), so it is still applicable without change. Finally, we may identify the action of the Dyer--Lashof operations on the Bredon homology of the Spanier--Whitehead dual of $S(\c^r)_+ \cong S^{n\rho-1}_+$ with the action of the Steenrod operations on the Bredon cohomology of $S^{n\rho-1}_+$. Although the Steenrod operations act nontrivially on these cohomology groups in general, they act trivially on the fundamental class, which is the only element relevant to the proof. 
\end{proof}

\begin{remark}
As a consequence of \cref{prop:diffsoperations}, when $r=1$, we deduce compatibility of the Dyer--Lashof operations with the $\bar{\sigma}$-operator
$Q^V(\bar{\sigma}x) = \bar{\sigma}Q^V x$.
\end{remark}

\section{Infinite cycles}\label{Sec:Inf}

In this section we produce infinite cycles in the homological parametrized homotopy fixed point spectral sequence of $C_2$-$\mathbb{E}_\infty$-algebras using equivariant power operations. This extends \cite[Sec. 5]{BR05} to the Real setting. 

\begin{thm}\label{thm:pc}
Let $R$ be a $C_2$-$\mathbb{E}_\infty$-algebra in the $C_2$-category of $C_2$-spectra with twisted $\t$-action. 
Suppose that $x \in H_V(R;\mft)$ survives to the $E^{2r}$-term $E^{2r}_{0,V} \subset H_V(R;\mft)$ of the homological parametrized homotopy fixed point spectral sequence for $R$, and there is a nontrivial differential $d^{2r}(x) = \bar{y}^r \cdot \delta x$.
\begin{enumerate}
	\item If $|x| = k\rho$, then the $2r$ classes
	\[Q^{k\rho}(x), \ Q^{k\rho+\sigma}(x), Q^{(k+1)\rho}(x), \ \ldots, \ Q^{(k+r-1)\rho}(x), \ \text{ and } \ Q^{(k+r-1) \rho+\sigma}(x) + x \delta x\]
	all survive to the $E^\infty$-term.
	\item If $|x| = k\rho+1$, then the $2r$ classes
	\[Q^{k\rho+\sigma}(x), Q^{(k+1)\rho}(x), Q^{(k+1)\rho+\sigma}(x), \ \ldots, \ Q^{(k+r-1)\rho + \sigma}(x), \ \text{ and } \ Q^{(k+r)\rho}(x) + x \delta x\]
	all survive to the $E^\infty$-term.
\end{enumerate}
\end{thm}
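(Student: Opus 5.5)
The plan is to adapt the proof of \cite[Thm.~5.1]{BR05} (their Theorem~1.2), using \cref{prop:diffsoperations}, \cref{lemma:ystructure} and the geometric description of $d^{2r}$ from \cref{Sec:Diff}.

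\emph{Step 1: the $2r-1$ pure operations.} By \cref{prop:diffsoperations}, $d^{2r}(Q^{W}x)=\bar{y}^r\, Q^{W}(\delta x)$. The class $\delta x$ lies in degree $V+r\rho-1$, which is $(k+r)\rho-1$ in case (a) and $(k+r)\rho$ in case (b). The equivariant Dyer--Lashof operations vanish below the degree of their input, and $Q^{|y|}y=y^2$ in the squaring degree. Hence $Q^W(\delta x)=0$ whenever $W$ is strictly below $|\delta x|$. In case (a) this covers exactly $W\in\{k\rho, k\rho+\sigma,\dots,(k+r-1)\rho\}$. In case (b) it covers $W\in\{k\rho+\sigma,\dots,(k+r-1)\rho+\sigma\}$. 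For these $W$ the class $Q^Wx$ survives to $E^{2r+1}$.

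\emph{Step 2: higher differentials.} I would show that these classes also support no later differential, by running the same extension argument on $x'$ one filtration up. Bruner--Rognes do this by noting that $Q^W x$ lifts to the $C_2$-$\mathbb{E}_\infty$-algebra $F(S(\c^{m})_+,R)^{\t}$ for every $m$. Concretely, the operation $Q^W$ applied to a class on $S(\c^r)$ extends over $S(\c^{2r})$ (or further), because the obstruction is $Q^W(\delta x\otimes\iota)$ computed via \cref{prop:diffsoperations} and the Cartan formula, and it vanishes for degree reasons. The Cartan formula of \cite[Cor.~1.3.2]{Wil19} and the triviality of Steenrod operations on the fundamental class of $S^{r\rho-1}$ keep the computation identical to the classical one. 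Equivalently, in the expression $Q^W(x\otimes1+\delta x\otimes\iota_{r\rho-1})$ all cross terms have trivial coefficient in $\iota$-degree beyond the top cell, since the cohomology of $S^{r\rho-1}_+$ is concentrated in degrees $0$ and $r\rho-1$.

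\emph{Step 3: the top class.} For the remaining operation $W_0$ (namely $(k+r-1)\rho+\sigma$ in case (a) and $(k+r)\rho$ in case (b)), $W_0=|\delta x|$, so $Q^{W_0}\delta x=(\delta x)^2$. I would compute $d^{2r}$ of the product $x\cdot\delta x$ using the twisted Leibniz rule from \cref{Prop:MultStr}. Since $\delta x$ is a $d^{2r}$-cycle (as $d^{2r}\circ d^{2r}=0$, together with \cref{lemma:ystructure}), this gives
\[d^{2r}(x\,\delta x)=\bar{y}^r\,\delta x\,\mu^r_\star(\delta x).\]
I need this to equal $\bar{y}^r(\delta x)^2$, which requires controlling the Weyl twist $\mu_\star$. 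Here $\mu_\star(\delta x)=\delta x+a_\sigma\bar{\sigma}(\delta x)$. I expect the correction term to vanish on the $E^{2r}$-page, because $\bar{\sigma}(\delta x)$ is a $d^2$-boundary or zero when $r\ge 1$. This is the main obstacle, and I would first check it via restriction and geometric fixed points using \cref{lemma:BWLemma}. Once it is settled, the two terms cancel mod $2$ and $Q^{W_0}x+x\delta x$ survives to $E^{2r+1}$. Survival past $E^{2r+1}$ then follows as in Step 2, working with the explicit class $x'$ and $\nu_\star^{-1}\varphi_\star$, as in the proof of Bruner--Rognes.
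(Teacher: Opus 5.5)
Steps~1 and~3 only show that the listed classes are $d^{2r}$-cycles, i.e.\ that they survive to $E^{2r+2}$. That much does follow from \cref{prop:diffsoperations}, the vanishing of $Q^W(\delta x)$ below the degree of $\delta x$, and the Leibniz rule.

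The worry about $\mu_\star$ in Step~3, which you call the main obstacle, is harmless. You may take $\delta x\in E^{2r}_{0,\star}$. For $r\ge 2$ this subgroup lies in $\ker\bar{\sigma}$. For $r=1$ one has $\bar{\sigma}(\delta x)=\bar{\sigma}\bar{\sigma}(x)=0$. Either way $\mu_\star(\delta x)=\delta x$.

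The real content of the theorem is that no $d^{2s}$ with $s>r$ is nonzero on these classes, and Step~2 does not prove this. The class $Q^W(x')$ lives in $H_\star(F(S(\c^r)_+,R)^\t;\mft)$ and at no higher stage. Because $d^{2r}(x)\neq 0$, $x'$ has no extension over $S(\c^{r+1})$, so there is nothing at stages $m>r$ to apply $Q^W$ to. The Cartan-formula computation of $\nu_\star^{-1}\varphi_\star Q^W(x')$ only sees the two cells of $S(\c^r)_+$, so it can only recover $d^{2r}$. In $R$ itself, the targets $\bar{y}^s\cdot H_{V+W+s\rho-1}(R;\mft)$ of the higher differentials are arbitrary and are not controlled by $\delta x$. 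The claim that $Q^Wx$ lifts to $F(S(\c^m)_+,R)^\t$ for all $m$ is exactly what must be proved; it is not an observation Bruner--Rognes make. The same gap undermines the last sentence of Step~3.

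The missing idea is a universal example, which is how both the paper and \cite[Thm.~5.1]{BR05} argue.
\begin{enumerate}
\item Let $X=\bar{D}_2(S(\c^r)_+\wedge S^V)$ be the equivariant extended power: the HHR norm to $\mu_2\times C_2$ followed by $C_2$-parametrized $\mu_2$-orbits.
\item The extension $x'$ and the $C_2$-$\mathbb{E}_\infty$ structure map $\xi_2$ give a $C_2$-twisted $\t$-equivariant map $H\mft\wedge X\to H\mft\wedge R$. It sends $x\delta x$, $Q^W(x)$ and $Q^W(\delta x)$ to the classes of the same names. By naturality it therefore suffices to prove permanence in $\mathrm{HHFPSS}_{C_2}(X)$.
\item In that spectral sequence everything is explicit. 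Wilson's computation gives $H_\star(X;\mft)$ as a free $H_\star$-module on $x\delta x$, $Q^{i\rho+\epsilon\sigma}(x)$ for $i\ge k$, and $Q^{j\rho-\epsilon}(\delta x)$ for $j\ge k+r$ in case~(a), with the analogous list in case~(b). This is where the hypothesis $|x|\in\{k\rho,k\rho+1\}$ is genuinely needed; in your argument it only does bookkeeping, which signals that the essential step is absent.
\item The Weyl action on $H_\star(X;\mft)$ is trivial, so no twisting question arises at all.
\item The $d^{2r}$'s come from \cref{prop:diffsoperations}, which applies because $X$ is a retract of the free algebra $\mathbb{P}(X)$. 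They are $d^{2r}(Q^Wx)=\bar{y}^r Q^W(\delta x)$ and $d^{2r}(x\delta x)=\bar{y}^r(\delta x)^2$.
\item After these, $E^{2r+2}$ is $H_\star[\bar{y}]$ on exactly the $2r$ listed classes, plus $\bar{y}$-torsion in filtrations $-2r<s\le 0$. Every later differential out of filtration zero then lands in a zero group for degree reasons, so the spectral sequence collapses at $E^{2r+2}$.
\end{enumerate}
This degree-driven collapse, available only in the universal example, is what rules out the higher differentials that cannot be controlled directly in $R$.
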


\begin{proof}
We adapt the proof of \cite[Thm. 5.1]{BR05} which considers a universal example. 

Recall from \cref{Sec:Diff} that a class $x \in E^{2r}_{0,V}$ is represented by a $C_2$-twisted $\t$-equivariant map $x\colon  S(\c)_+ \wedge S^V \to H\mft \wedge R$ that admits a $C_2$-twisted $\t$-equivariant extension $x' \colon  S(\c^r)_+ \wedge S^V \to H\mft \wedge R$. Let 
\[ X = \bar{D}_2(S(\c^r)_+ \wedge S^V) \coloneq (N_{C_2}^{\mu_2 \times C_2} ( S(\c^r)_+ \wedge S^V))_{h_{C_2}\mu_2} \]
denote the second equivariant extended power of the $C_2$-spectrum $S(\c^r)_+ \wedge S^V$. Here, $N_{C_2}^{\mu_2 \times C_2}$ is the HHR norm from genuine $C_2$-spectra to genuine $(\mu_2 \times C_2)$-spectra, where $\mu_2 \times C_2$ is the Klein four group, and $(-)_{h_{C_2}\mu_2}$ is the $C_2$-parametrized $\mu_2$-homotopy orbits. 

Let us write $H_\star(S(\c^r)_+ \wedge S^V;\mft) = H_\star \{ x, \delta x\}$ where $|x| = V$ and $|\delta x| = V + r \rho - 1$. Then \cite[Thm. 2.15]{Wil17} implies that if $|x| = k\rho$
\[
H_\star(X;\mft) \cong H_\star \left\{ x \delta x,\ Q^{i\rho+\epsilon \sigma}(x), \ Q^{j\rho-\epsilon}(\delta x) \mid  i \geq k, \  j \geq k+r, \ \epsilon \in \{0,1\} \right\},
\] 
while  \cite[Proposition~2.4.1]{Wil19} implies that if $|x| = k\rho+1$, then 
\[H_\star(X;\mft)\cong H_\star \left\{x \delta x, \ Q^{i\rho-\epsilon}(x), Q^{j\rho+\epsilon\sigma}(\delta x) \mid  i\geq k+1, j\geq k+r,\epsilon\in\{0,1\} \right\}.\]
Since Wilson states this last computation without proof, we refer the reader to the proof of \cite[Proposition~5.6]{CGPrealBP} for a sketch of the argument. 

The $C_2$-twisted $\mathbb{T}$-equivariant extension $x'$ induces a $C_2$-twisted $\mathbb{T}$-equivariant map
\[
\bar{D}_2(x')\colon  X=\bar{D}_2(S(\mathbb{C}^r)_+ \wedge S^V) \to \bar{D}_2(H\mft \wedge R).
\]
Observe that the restriction of the $O(2)$-space $E_{C_2}\t$ to $(\mu_2 \times C_2)$-spaces is a model for $E_{C_2}\mu_2$, which is a model for the second arity in an $E_{\infty \rho}$-operad \cite{GM17}. Together with the map above, this yields a $C_2$-twisted $\mathbb{T}$-equivariant structure map 
\[
\xi_2\colon  \bar{D}_2(H\mft \wedge R) \to H\mft \wedge R
\]
that extends the multiplication on $H\mft \wedge R$. We then get a $C_2$-twisted $\mathbb{T}$-equivariant map
\[
H\mft \wedge \bar{D}_2(S(\mathbb{C}^r)_+ \wedge S^V) \xrightarrow{1 \wedge \bar{D}_2(x')} H\mft \wedge \bar{D}_2(H\mft \wedge R) \xrightarrow{1 \wedge \xi_2} H\mft \wedge H\mft \wedge R \xrightarrow{m \wedge 1} H\mft \wedge R 
\]
where $m$ is the multiplication on $H\mft$. Applying $\pi_\star^{C_2}(-)$, we have 
\[
H_{\star}(X; \mft) = H_{\star}( \bar{D}_2(S(\mathbb{C}^r)_+ \wedge S^V); \mft) \to H_{\star}(R; \mft),
\]
which takes the classes generating $H_{\star}(X; \underline{\mathbb{F}}_2)$ to the classes with the same names in $H_{\star}(R; \underline{\mathbb{F}}_2)$. Now $X= \bar{D}_2(S(\mathbb{C}^r)_+ \wedge S^V)$ is a $C_2$-twisted $\mathbb{T}$-equivariant retract of the free $C_2$-commutative algebra $\mathbb{P}(X)$ (see, for example \cite[Sec. 2.1]{Wil17} and \cite[Appendix A.1]{BlHi15}). It follows that the $\mathrm{HHFPSS}_{C_2}(X)$ is a direct summand of the $\mathrm{HHFPSS}_{C_2}(\mathbb{P}(X))$. Thus the formula from \cref{prop:diffsoperations} for the $d^{2r}$-differentials in the spectral sequence for $\mathbb{P}(X)$ also applies to the spectral sequence for $X$. 

We now look at the $\mathrm{HHFPSS}_{C_2}$ for $X =  \bar{D}_2(S(\mathbb{C}^r)_+ \wedge S^V)$. We will show that the classes listed in the statement of the proposition are infinite cycles in this spectral sequence. By naturality of the homological parametrized homotopy fixed point spectral sequence with respect to the map $H\mft \wedge X \to H\mft \wedge R$ we then conclude that the $2r$ target classes listed in $E^{2r}_{0,V}$ are also infinite cycles. 

Observe that the Weyl action on $H_\star(X;\mft)$ is trivial, and so the homological parametrized homotopy fixed point spectral sequence for $X$ has $E^2$-page given by a module over the polynomial algebra $H_\star[\bar{y}]$. We consider the case $|x| = k\rho$ first, where we have
\[E^2_{\ast,\star} = H_\star[\bar{y}] \left\{ x \delta x, \ Q^{i\rho+\epsilon \sigma}(x), \ Q^{j\rho-\epsilon}(\delta x) \mid  i \geq k, \  j \geq k+r, \ \epsilon \in \{0,1\} \right\}.\]
After running the differentials \[d^{2r}(x \delta x) = \bar{y}^r \cdot (\delta x)^2 = \bar{y}^r \cdot Q^{(k+r-1)\rho+\sigma}(\delta x) \quad\text{ and }\quad d^{2r}(Q^{i\rho+\epsilon \sigma} (x)) = \bar{y}^r \cdot Q^{i\rho+\epsilon \sigma}(\delta x)\] for all $i \geq k$ and $\epsilon \in \{0,1\}$, together with their $\bar{y}$-multiples, we have
\[E^{2r+2}_{\ast,\star} = H_\star[\bar{y}] \left\{x\delta x+Q^{(k+r-1)\rho+\sigma}(x), Q^{(k+r-1)\rho}(x), Q^{i\rho+\epsilon\sigma}(x) \mid  k \leq i \leq k+r-2, \ \epsilon \in \{0,1\} \right\}\]
plus some $\bar{y}$-torsion classes from $E^2_{\ast,\star}$ in filtrations $-2r < s \leq 0$. All further differentials on the $H_\star$-module generators of $E^{2r+2}_{0,\star}$ are zero since the target groups vanish. Thus all further differentials from the vertical axis are zero and the spectral sequence collapses at $E^{2r+2} = E^\infty$.


In the case of $|x| = k\rho+1$, we instead have 
\[E^2_{\ast,\star} = H_\star[\bar{y}] \left\{x \delta x, Q^{i\rho-\epsilon}(x),Q^{j\rho+\epsilon\sigma}(\delta x)\mid  i \geq k+1, j\geq k+r,\epsilon\in\{0,1\} \right\}.\]
After running the differentials \[d^{2r}(x\delta x) = \bar{y}^r\cdot (\delta x)^2 = \bar{y}^r\cdot Q^{(k+r)\rho}(\delta x)\quad\text{ and }\quad  d^{2r}(Q^{i\rho-\epsilon}(x)) = \bar{y}^r\cdot Q^{i\rho-\epsilon}(\delta x)\]
for all $i\geq k+1$ and $\epsilon\in\{0,1\}$, we have 
\[E^{2r+2}_{\ast,\star} = H_\star[\bar{y}] \left\{x\delta x + Q^{(k+r)\rho}(x), Q^{(k+r)\rho-1}(x), Q^{i\rho-\epsilon}(x) \mid  k+1\leq i\leq k+r-1, \epsilon\in\{0,1\} \right\}\]
plus some $\bar{y}$-torsion classes from $E^2_{\ast,\star}$ in filtrations $-2r < s \leq 0$. As before, the spectral sequence collapses at $E^{2r+2} = E^\infty$.
\end{proof}

\begin{remark}
The restriction $V = k\rho$ or $k\rho+1$ for this theorem was required to apply Wilson's analysis of the genuine extended powers for $S^{k\rho+1}, S^{k\rho}$ and $S^{k\rho-1}$. 
Given the behavior of the homology of $\bar{D}_2(S^{a+b\sigma})$ for $|a-b|>1$ alluded to at the end of \cite{Wil19}, it is unclear to the authors how to generalize this theorem for classes in other degrees.  
\end{remark}

\begin{figure}[H]
\begin{center}
\includegraphics[scale = 0.75]{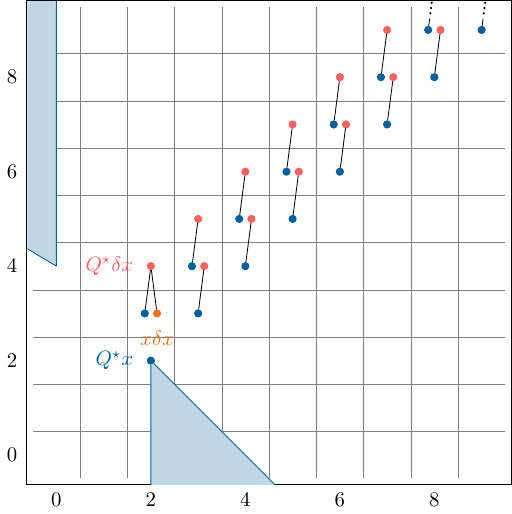}\qquad 
\includegraphics[scale = 0.75]{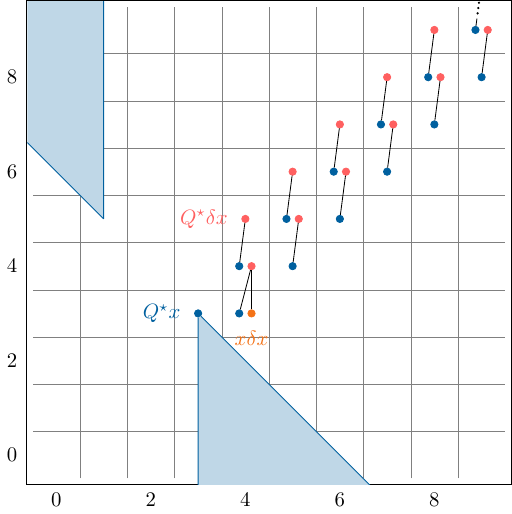}
\end{center}
\caption{The homological parametrized homotopy fixed point spectral sequence for 
$\bar{D}_2(S(\mathbb{C})_+\wedge S^{\rho})$ (left) and $\bar{D}_2(S(\mathbb{C})_+\wedge S^{\rho+1})$ (right). 
A single $(x,y)$ coordinate corresponds to the $RO(C_2)$-degree $x+y\sigma$ with filtration degree suppressed.
Each dot represents a copy of $H_{\star}$. A $d^{2r}$ differential goes up by $r$ and right by $r-1$. }
\end{figure}

\section{Computations}\label{Sec:Compute}

Our \cref{thm:pc} shows that differentials in the $\mathrm{HHFPSS}_{C_2}$ for $C_2$-$\mathbb{E}_\infty$-rings give rise to permanent cycles by means of equivariant power operations.
In this section, we apply this theorem to compute homological information for Real trace invariants of various rings. As in the classical case \cite{BR05}, we find that 
\cref{thm:pc} can be used to completely compute the spectral sequence in many examples of interest. 

\begin{notation}
To lighten the notation, in this section we will write 
\[T^\Phi(A)\coloneq\THR(A)^{\Phi C_2}\] 
for the geometric fixed points of Real topological Hochschild homology.
We also suppress the coefficient Mackey functor from the notation and write
$H_\star(X)=H_\star(X;\mft)$ for the $RO(C_2)$-graded Bredon homology of $X$.
\end{notation}
\subsection{The continuous homology of $\TCR^{-}$ of Real bordism}\label{Sec:MUR}

The goal for this subsection is to compute the $E^\infty$-terms of the 
$\mathrm{HHFPSS}_{C_2}$ for $\MUR$ and $\BPR$. 
We begin by computing the homology of $\THR(\MUR)$. 

\begin{proposition}\label[proposition]{prop:HTHRMUR}
The $\mft$-homology of $\THR(\MUR)$ is given by 
\[H_\star \THR(\MUR) \cong H_{\star}[\bar{b}_i \mid  i\geq 1]\otimes_{H_\star} \Lambda_{H_\star}[\bar{\sigma}\bar{b}_i \mid i \geq 1],\]	
with $|\bar{b}_i| = i\rho$ and $|\bar{\sigma}\bar{b}_i| = i\rho+\sigma$. 
\end{proposition}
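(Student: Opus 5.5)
The plan is to construct the candidate classes explicitly and then verify, using \cref{lemma:BWLemma}, that the resulting monomials form an $H_\star$-basis. We need to check that their restrictions form a basis of the underlying homology and that their reduced geometric fixed points form a basis of the geometric fixed point homology.

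First, the classes. Recall the known computation $H_\star(\MUR;\mft)\cong H_\star[\bar{b}_i \mid i\geq 1]$, with $|\bar b_i| = i\rho$ (Hu--Kriz). Under restriction, $\bar b_i$ maps to the usual generator $b_i\in H_{2i}(MU;\mathbb{F}_2)$. Under reduced geometric fixed points, $\overline{\Phi}(\bar b_i)$ maps to the generator of $H_i(MO;\mathbb{F}_2)\cong \F_2[a_i]$, since $\MUR^{\Phi C_2}\simeq MO$. The unit $\MUR\to\THR(\MUR)$ gives the classes $\bar b_i$ in $H_\star\THR(\MUR)$. The operator $\bar\sigma$ of \cref{SS:d2mu} gives $\bar\sigma\bar b_i$ in degree $i\rho+\sigma$. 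By \cref{prop:twistedsigma} and $\bar\sigma\circ\bar\sigma=0$, the $\bar\sigma\bar b_i$ have square zero modulo the correction terms; I would argue that the square relation can be arranged in the stated exterior form by degree considerations. The strategy is in any case to show the monomials $\prod \bar b_i^{e_i}\prod(\bar\sigma\bar b_j)^{\epsilon_j}$, $\epsilon_j\in\{0,1\}$, form an $H_\star$-basis, and then identify the ring structure.

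Second, underlying homology. Restriction sends $\THR(\MUR)$ to $\THH(MU)$ and $\bar\sigma$ to Connes' operator $\sigma$. By B\"okstedt/Bruner--Rognes, $H_*(\THH(MU);\F_2)\cong H_*(MU)\otimes\Lambda(\sigma b_i)$. Hence the restrictions of our monomials form an $\F_2$-basis.

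Third, geometric fixed points, which I expect to be the main obstacle. By \cref{cor:residualmugfp}, $T^\Phi(\MUR)\simeq MO\wedge_{MU}MO$. Its homology is computed by a Künneth spectral sequence $\mathrm{Tor}^{H_*MU}(H_*MO,H_*MO)$, with module structure given by \cref{prop:gfpmodulestructure}, namely via $\overline{\Phi}\circ n$. I would identify $\overline{\Phi}(n(b_i))=a_i^2$. Then $H_*MO$ is free over the image $\F_2[a_i^2]$, and the Tor is $H_*MO\otimes_{\F_2[a_i^2]}H_*MO\cong \F_2[a_i]\otimes\Lambda$-like, more precisely $\F_2[a_i]\otimes\F_2[a'_i]/(a_i^2=a_i'^2)$. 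This has the same size as $\F_2[a_i]\otimes\Lambda(e_i)$ with $e_i=a_i+a'_i$. Since everything is concentrated in filtration zero, there are no extension or differential issues. Finally, I would check that $\overline{\Phi}(\bar b_i)=a_i$ and $\overline{\Phi}(\bar\sigma\bar b_i)=\sigma_\mu(a_i)=a_i+\mu_*(a_i)=a_i+a'_i$, using the compatibility $\overline{\Phi}\bar\sigma=\sigma_\mu\overline{\Phi}$ and the fact that $\mu_2$ swaps the factors. Then the monomials map to the basis $\{\prod a_i^{e_i}\prod e_j^{\epsilon_j}\}$. The delicate point is checking the degree bookkeeping $|V^{C_2}|$ and the norm identity $\overline{\Phi}n(b_i)=a_i^2$. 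For the latter I would reduce to the universal case via the Real orientation $\mathbb{C}P^\infty_{\mathbb{R}}\to\Sigma^{\rho}\MUR$, whose fixed points are $\mathbb{R}P^\infty\to\Sigma MO$.

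With both bases established, \cref{lemma:BWLemma} gives the additive statement. For the multiplicative statement, the map from the polynomial-tensor-exterior algebra is well defined once $(\bar\sigma\bar b_i)^2=0$. I would check that vanishing by computing its restriction, which is $(\sigma b_i)^2=0$, and its geometric fixed points, which are $e_i^2=a_i^2+a_i'^2=0$. Since an element is determined by these two images in this free module, the square vanishes.
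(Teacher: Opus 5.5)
Your additive argument matches the paper's: classes from the left unit and $\bar\sigma$, the underlying basis from $H_*\THH(MU)$, the collapsing K\"unneth computation of $H_*(MO\wedge_{MU}MO)$ with $\overline{\Phi}(\bar\sigma\bar b_i)=a_i+a_i'$, and then \cref{lemma:BWLemma}. For $\overline{\Phi}n(b_i)=a_i^2$ you do not need a universal example. Restriction is injective on $H_{\ast\rho}\MUR$, and both $n(b_i)$ and $\bar b_i^2$ restrict to $b_i^2$, because conjugation acts trivially on $H_*(MU;\F_2)$.

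The genuine gap is the final step. It is not true that an element of a free $H_\star$-module is determined by its restriction and its reduced geometric fixed points. \cref{lemma:BWLemma} is a criterion for a basis, not a detection statement. Since $\res(a_\sigma)=0$ and $\overline{\Phi}(u_\sigma)=0$, every class of the form $u_\sigma^p a_\sigma^q m$ with $p,q\geq 1$ is invisible to both maps.

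Such classes occur in the relevant degree. For $i=8$, the element $u_\sigma a_\sigma\prod_{l=1}^{5}\bar\sigma\bar b_l$ is a nonzero coefficient times a basis monomial, hence nonzero. It has degree $16\rho+2\sigma=|(\bar\sigma\bar b_8)^2|$, yet its restriction and reduced geometric fixed points both vanish. So $(\sigma b_i)^2=0$ and $(a_i+a_i')^2=0$ do not rule out $(\bar\sigma\bar b_i)^2$ being a nonzero combination of such classes once $i\geq 8$. For $i\leq 7$ the degree happens to contain none, so your argument only covers those cases.

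Your earlier appeal to \cref{prop:twistedsigma} does not close the gap either, since it only yields $a_\sigma(\bar\sigma\bar b_i)^2=\bar\sigma(\bar b_i^2)$. The missing input is the $C_2$-$\mathbb{E}_\infty$ structure. Since $|\bar\sigma\bar b_i|=i\rho+\sigma$, the paper writes $(\bar\sigma\bar b_i)^2=Q^{i\rho+\sigma}(\bar\sigma\bar b_i)=\bar\sigma\,Q^{i\rho+\sigma}(\bar b_i)$. This uses the compatibility of $\bar\sigma$ with Wilson's power operations from the remark after \cref{prop:diffsoperations}. Finally, $Q^{i\rho+\sigma}(\bar b_i)$ lies in $H_{2i\rho+\sigma}\MUR$, which vanishes for degree reasons.
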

\begin{proof}
Since $H\underline{\F}_2$ is Real-oriented, we have 
\[H_\star\MUR\cong H_{\star}[\bar{b}_i \mid i\geq 1],\] with $|\bar{b}_i| = i\rho$. Let $\bar{b}_i\in H_\star\THR(\MUR)$ be the image of the 
element with the same name under the map $H_\star\MUR\to H_\star\THR(\MUR)$ induced by the left unit. On underlying, we have 
\[H_\ast\THH(\mathrm{MU})\cong \F_2[b_i \mid i\geq 1]\otimes\Lambda_{\F_2}[\sigma b_i \mid i\geq 1],\] with 
$b_i$ and $\sigma b_i$ the restrictions of $\bar{b}_i$ and $\bar{\sigma}\bar{b}_i$, respectively.

We now check the result on reduced geometric fixed points. First note that 
\[T^{\Phi}(\MUR)\simeq \mathrm{MO}\wedge_{MU} \mathrm{MO}.\]
The arguments of \cite[Sec.~5.2.2]{HHR} show that $\overline{\Phi}(\bar{b}_i)$ are polynomial generators for $H_\ast\mathrm{MO}$. Since restriction is injective on 
$H_{\ast\rho}\MUR$, we can deduce the norms $n(b_i) = \bar{b}_i^2$. Then by \cref{prop:gfpmodulestructure} we then have that $H_\ast\mathrm{MU}\to H_\ast \mathrm{MO}$ sends $b_i\mapsto \overline{\Phi}(\bar{b}_i)^2$. 
It follows that the homology $H_\ast T^\Phi(\MUR)$ can be computed by a collapsing K\"unneth spectral sequence.
Now \cref{cor:residualmugfp} implies that $\overline{\Phi}(\bar{b}_i)$ and 
$\overline{\Phi}(\bar{b}_i + \bar{\sigma}\bar{b}_i)$ are the images of the generators of $H_\ast(\mathrm{MO})$ under the left and right units $\mathrm{MO}\to T^\Phi(\MUR)$, respectively. 
Since $H_\ast\mathrm{MU}\to H_\ast \mathrm{MO}$ sends $b_i\mapsto \overline{\Phi}(\bar{b}_i)^2$, we must have that
$\overline{\Phi}(\bar{\sigma}\bar{b}_i)^2 = 0$, and so 
\[H_\ast T^{\Phi}(\MUR)\cong \F_2\left[\overline{\Phi}(\bar{b}_i) \mid i>0 \right]\otimes \Lambda_{\F_2}\left[ \overline{\Phi}(\bar{\sigma}\bar{b}_i) \mid i>0 \right].\]

The proof of the additive statement now follows from \cref{lemma:BWLemma}. More precisely, we use the lemma to establish that the monomial basis 
generated by the $\bar{b}_i, \bar{\sigma}\bar{b}_i$ is a free $H_\star$-basis. 

It remains to check that the $\bar{\sigma}\bar{b}_i$ square to zero. This can be checked using equivariant power operations:
\[(\bar{\sigma}\bar{b}_i)^2 = Q^{i\rho + \sigma}(\bar{\sigma}\bar{b}_i) = \bar{\sigma}(Q^{i\rho+\sigma}(\bar{b}_i)) = 0,\] where the last equality follows from the fact that $Q^{i\rho+\sigma}\bar{b}_i = 0\in H_\star\MUR$ for degree reasons. 
\end{proof}

\begin{remark}
We note that the form of $H_\star\THR(\MUR)$ and $H_\ast T^{\Phi}(\MUR)$ can also be deduced from 
the formula $\THR(\MUR)\simeq \MUR\wedge\Sigma_+^\infty B^\sigma BU_{\mathbb{R}}$ given in \cite[Corollary~7.2.2]{HKZ20}. We opted to give a detailed proof here that shows that the 
exterior generators could be taken to be the images under $\bar{\sigma}$ of the polynomial generators. 
\end{remark}

\begin{thm}\label{thm:HcTHRMUR}
The $E^\infty$-term of the $\mathrm{HHFPSS}_{C_2}(\THR(\MUR))$ converging to the continuous homology, $H^c_{\star}(\TCR^-(\MUR))$, is given by
\[E^\infty_{\ast,\star} \cong H_\star[\bar{y}, \bar{b}_i^2 \mid i>0] \otimes_{H_\star} \Lambda_{H_\star}[\bar{b}_i\bar{\sigma}\bar{b}_i \mid i>0] \oplus (\text{simple $\bar{y}$-torsion}).\]
\end{thm}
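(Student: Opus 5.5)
We follow the strategy of Bruner--Rognes for $\THH(\mathrm{MU})$, running the spectral sequence in two stages: the $d^2$-differential, then the power-operation argument.

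\textbf{Step 1: the $E^2$-term and the $d^2$-differential.} By \cref{Prop:MultStr} the $E^2$-term is $H_\star\THR(\MUR)\langle\bar{y}\rangle$, with $H_\star\THR(\MUR)$ as in \cref{prop:HTHRMUR}. We first check that the Weyl action is trivial on the generators. Indeed, $\mu_\star(x)=x+a_\sigma\bar{\sigma}(x)$. We have $\bar{\sigma}\bar{\sigma}=0$, and $a_\sigma\bar{\sigma}\bar{b}_i$ lies in degree $i\rho$. If $\mu_\star$ is nontrivial on $\bar{b}_i$, we must track this twist; we expect it to be harmless, since $\mu_\star$ is a ring map and $\bar{y}$ is central up to $\mu_\star$.

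By \cref{prop:d2formula}, $d^2(\bar{b}_i)=\bar{y}\,\bar{\sigma}\bar{b}_i$ and $d^2(\bar{\sigma}\bar{b}_i)=0$. By the twisted Leibniz rule of \cref{prop:twistedsigma}:
\begin{itemize}
\item $\bar{\sigma}(\bar{b}_i^2)=2\bar{b}_i\bar{\sigma}\bar{b}_i+a_\sigma(\bar{\sigma}\bar{b}_i)^2=0$, using that $(\bar{\sigma}\bar{b}_i)^2=0$;
\item $\bar{\sigma}(\bar{b}_i\bar{\sigma}\bar{b}_i)=(\bar{\sigma}\bar{b}_i)^2=0$.
\end{itemize}
Hence on each tensor factor $H_\star[\bar{b}_i]\otimes\Lambda[\bar{\sigma}\bar{b}_i]$ the $\bar{\sigma}$-homology is $H_\star[\bar{b}_i^2]\otimes\Lambda[\bar{b}_i\bar{\sigma}\bar{b}_i]$, exactly as in the classical Koszul-type computation.

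To pass to the full tensor product, we use the following filtration argument. The cross terms $a_\sigma\bar{\sigma}(a)\bar{\sigma}(b)$ vanish once one factor is a $\bar{\sigma}$-cycle, or they can be removed by a filtration by polynomial degree. Alternatively, one checks the answer after restriction and $\overline{\Phi}$ and invokes \cref{lemma:BWLemma}. The restriction recovers Bruner--Rognes' computation, and $\overline{\Phi}$ gives the analogous $\sigma_\mu$-homology of $H_\ast T^\Phi(\MUR)$ via \cref{prop:gfpssmap}.

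We conclude that
\[E^4=H_\star[\bar{y},\bar{b}_i^2]\otimes_{H_\star}\Lambda_{H_\star}[\bar{b}_i\bar{\sigma}\bar{b}_i]\]
plus $\bar{y}$-torsion in filtration $0$, namely the image of $\bar{\sigma}$. By \cref{lemma:ystructure} this torsion is simple.

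\textbf{Step 2: the remaining generators are permanent cycles.} We apply \cref{thm:pc}(a) with $r=1$ to $x=\bar{b}_i$, where $|x|=i\rho$ and $\delta x=\bar{\sigma}\bar{b}_i$. This requires $\THR(\MUR)$ to be a $C_2$-$\mathbb{E}_\infty$-ring, which holds since $\MUR$ is one. The theorem gives that $Q^{i\rho}(\bar{b}_i)=\bar{b}_i^2$ and $Q^{i\rho+\sigma}(\bar{b}_i)+\bar{b}_i\bar{\sigma}\bar{b}_i$ survive to $E^\infty$.

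Next we show that $Q^{i\rho+\sigma}(\bar{b}_i)$ is itself zero or already a permanent cycle. We compute it in $H_\star\MUR$ and push it forward along the unit map $H_\star\MUR\to H_\star\THR(\MUR)$. In the proof of \cref{prop:HTHRMUR} it was used that $Q^{i\rho+\sigma}\bar{b}_i=0$ in $H_\star\MUR$ for degree reasons. Hence $\bar{b}_i\bar{\sigma}\bar{b}_i$ is an infinite cycle.

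\textbf{Step 3: collapse and conclusion.} By the multiplicativity in \cref{Prop:MultStr}, all the algebra generators $\bar{y}$, $\bar{b}_i^2$ and $\bar{b}_i\bar{\sigma}\bar{b}_i$ of $E^4$ are infinite cycles, so the spectral sequence collapses at $E^4=E^\infty$. The torsion in filtration $0$ consists of boundaries only modulo $\bar{y}$, so it remains simple $\bar{y}$-torsion.

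The main obstacle is Step 1: making rigorous that the $\bar{\sigma}$-homology of the full tensor product is the tensor product of the $\bar{\sigma}$-homologies, despite the twisted Leibniz term and possible Weyl twisting. We plan to handle this via the detection of \cref{lemma:BWLemma} on underlying and geometric fixed points, where both computations are classical Koszul complexes.
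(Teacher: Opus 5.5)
Your proposal is correct and follows the paper's proof. Proposition \cref{prop:d2formula} gives $d^2(\bar b_i)=\bar y\cdot\bar\sigma\bar b_i$, which produces the $E^4$-term; then \cref{thm:pc} with $r=1$, together with $Q^{i\rho+\sigma}\bar b_i=0$ in $H_\star\MUR$, makes $\bar b_i^2$ and $\bar b_i\bar\sigma\bar b_i$ permanent cycles, which forces collapse at $E^4$.

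Two corrections to Step 1, a step the paper itself treats only briefly:
\begin{itemize}
\item The Weyl action is not trivial on generators, since $\mu_\star(\bar b_i)=\bar b_i+a_\sigma\bar\sigma\bar b_i\neq\bar b_i$. What you actually need is that $\mu_\star=\id+a_\sigma\bar\sigma$ is the identity on $\ker\bar\sigma$, and this is why $E^4$ is commutative.
\item The tensor-product step should use your one-variable-at-a-time filtration/splitting, not \cref{lemma:BWLemma}. Each factor splits into free acyclic pieces $\bar b_i^{2k+1}\mapsto\bar b_i^{2k}\bar\sigma\bar b_i$ and free cycles fixed by $\mu_\star$, and tensoring with an acyclic piece stays acyclic despite the $a_\sigma$ cross term. \cref{lemma:BWLemma} concerns homology of spectra, not $\bar\sigma$-homology. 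Moreover, on geometric fixed points $\sigma_\mu=1+\mu_\ast$ is not a derivation, so that side is not a classical Koszul complex.
\end{itemize}
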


\begin{proof}
%
%
%

The spectral sequence has the form
\[E^2_{\ast,\star} = H_\star\langle\bar{y}\rangle[\bar{b}_i \mid i>0] \otimes_{H_\star} \Lambda_{H_\star}[\bar{\sigma}\bar{b}_i \mid i>0] \Rightarrow H^c_\star \TCR^-(\MUR),\]
and by \cref{prop:d2formula}, the $d^2$-differentials are generated 
by $d^2(\bar{b}_i) = (\bar{\sigma}\bar{b}_i)\bar{y}$. 
%
This leaves $$E^3 = E^4 = H_\star[\bar{y},\bar{b}_i^2 \mid i>0] \otimes_{H_\star} \Lambda_{H_\star}[\bar{b}_i\bar{\sigma}\bar{b}_i \mid i>0] \oplus (\text{simple $\bar{y}$-torsion}).$$
We note that the $E^4$-page is commutative since the $d^2$-differentials are all of the form $d^2(x) = \bar{y}\bar{\sigma}x$ and $\bar{y}x = \bar{\sigma}x\bar{y}$. 
It then follows from \cref{thm:pc} that  $\bar{b}_i^2 = Q^{i\rho}(\bar{b}_i)$ and $Q^{i\rho+\sigma}(\bar{b}_i)+\bar{b}_i\bar{\sigma}\bar{b}_i = \bar{b}_i\bar{\sigma}\bar{b}_i$ for $i>0$ are permanent cycles. 
\end{proof}

We now move on to consider the computation for $\BPR$. It is important to note that $\BPR$ is an $\mathbb{E}_\rho$-algebra \cite[Theorem~7.3.1]{QuinnZhuMultiplicative} so that $\THR(\BPR)$ is defined.

\begin{proposition}\label[proposition]{prop:HTHRBPR}
The $\mft$-homology of $\THR(\BPR)$ is given by
\[H_\star\THR(\BPR)\cong H_\star[\bar{\xi}_i \mid i>0]\otimes_{H_\star}\Lambda_{H_\star}[\bar{\sigma}\bar{\xi}_i \mid i>0],\]
where $|\bar{\xi}_i| = (2^i-1)\rho$ and $|\bar{\sigma}\bar{\xi}_i| = (2^i-1)\rho+\sigma$.
Further,
$H_\star\THR(\MUR)\to H_\star\THR(\BPR)$ is a surjection sending $\bar{b}_{2^i-1}\mapsto \bar{\xi}_i$.
\end{proposition}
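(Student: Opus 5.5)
We follow the template of the proof of \cref{prop:HTHRMUR}. The plan is to define the classes, check them on underlying and on reduced geometric fixed points, and assemble with \cref{lemma:BWLemma}.

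\textbf{Definition of the classes.} Since $\BPR$ is a Real-oriented summand (Quillen idempotent) of $\MUR_{(2)}$, Hu--Kriz identify $H_\star\BPR \cong H_\star[\bar{\xi}_i \mid i>0]$. Here the map $H_\star\MUR\to H_\star\BPR$ is surjective and sends $\bar{b}_{2^i-1}\mapsto\bar{\xi}_i$, modulo decomposables, with the remaining $\bar{b}_j$ mapping into the subalgebra generated by the $\bar{\xi}$'s. Adjusting generators if necessary, we may take $\bar{b}_{2^i-1}\mapsto\bar{\xi}_i$ exactly. We then define $\bar{\xi}_i, \bar{\sigma}\bar{\xi}_i\in H_\star\THR(\BPR)$ as the image of $\bar{\xi}_i$ under the left unit and its image under $\bar{\sigma}$.

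\textbf{Functoriality and surjectivity.} Functoriality of $\THR$ along the $\mathbb{E}_\rho$-map $\MUR\to\BPR$ sends $\bar{b}_{2^i-1}\mapsto\bar{\xi}_i$. Because $\bar{\sigma}$ is natural, it also sends $\bar{\sigma}\bar{b}_{2^i-1}\mapsto\bar{\sigma}\bar{\xi}_i$. Surjectivity will then follow once the $\bar{\xi}_i, \bar{\sigma}\bar{\xi}_i$ are shown to generate.

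\textbf{Underlying check.} On underlying, $H_\ast\THH(\mathrm{BP})\cong \F_2[\xi_i^2]\otimes\Lambda[\sigma\xi_i^2]$ by Bökstedt/Bruner--Rognes, in the standard doubled-degree identification. The restrictions of $\bar{\xi}_i$ and $\bar{\sigma}\bar{\xi}_i$ are exactly these generators. Hence the monomial basis restricts to an $\F_2$-basis.

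\textbf{Geometric fixed points.} We have $\BPR^{\Phi C_2}\simeq H\F_2$, with $\overline{\Phi}(\bar{\xi}_i)=\xi_i$ the Milnor generators, so that $H_\ast(\BPR^{\Phi C_2})=\mathcal{A}_\ast$. Using \cref{cor:residualmugfp},
\[T^\Phi(\BPR)\simeq H\F_2\wedge_{\mathrm{BP}} H\F_2 .\]
By \cref{prop:gfpmodulestructure} and $n(\xi_i)=\bar{\xi}_i^2$, the map $H_\ast\mathrm{BP}\to\mathcal{A}_\ast$ sends $\xi_i^2\mapsto\xi_i^2$, and $\mathcal{A}_\ast$ is free over its image. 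The Künneth spectral sequence therefore collapses, giving
\[H_\ast T^\Phi(\BPR)\cong \mathcal{A}_\ast\otimes_{\F_2[\xi_i^2]}\mathcal{A}_\ast \cong \F_2[\xi_i]\otimes\Lambda[\overline{\Phi}(\bar{\sigma}\bar{\xi}_i)].\]
The exterior classes are identified, as in the $\MUR$ case, via left and right units differing by $\overline{\Phi}(\bar{\sigma}\bar{\xi}_i)$. The main technical point is this step: we must verify that the right unit of $\xi_i$ equals $\xi_i+\overline{\Phi}(\bar{\sigma}\bar{\xi}_i)$ and that these differences give the exterior generators. We would try first to transport this directly from the $\MUR$ computation via the map $T^\Phi(\MUR)\to T^\Phi(\BPR)$, using the fact that $\mathrm{MO}\to H\F_2$ carries $\overline{\Phi}(\bar{b}_{2^i-1})$ to $\xi_i$.

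\textbf{Assembly.} \cref{lemma:BWLemma} now gives that the monomials form a free $H_\star$-basis. The relation $(\bar{\sigma}\bar{\xi}_i)^2=0$ follows from the image of $(\bar{\sigma}\bar{b}_{2^i-1})^2=0$ under the ring map from $\THR(\MUR)$, which establishes the multiplicative structure and surjectivity.
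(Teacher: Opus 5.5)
Your proposal is correct and follows essentially the same route as the paper: it defines the classes (the paper simply defines $\bar{\xi}_i$ as the image of $\bar{b}_{2^i-1}$, which makes your ``adjusting generators'' step unnecessary), checks restrictions against $H_\ast\THH(\mathrm{BP})$, computes $H_\ast T^\Phi(\BPR)$ via the collapsing K\"unneth spectral sequence for $H\F_2\wedge_{\mathrm{BP}}H\F_2$, and concludes with \cref{lemma:BWLemma}. The step you flag as the main technical point needs no transport from $\MUR$: exactly as in the $\MUR$ case, it follows directly from \cref{cor:residualmugfp} together with $\mu_\star(x)=x+a_\sigma\bar{\sigma}(x)$ and $\overline{\Phi}(a_\sigma)=1$.
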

\begin{proof}
Define $\bar{\xi}_i\in H_\star\THR(\BPR)$ as the image of $\bar{b}_{2^i-1}$ under 
\[H_\star\THR(\MUR)\to H_\star\THR(\BPR).\]
The map induced on underlying homology is the surjection 
\[ \F_2 \left[b_i \mid i>0 \right] \otimes \Lambda_{\F_2} \left[ \sigma b_i \mid i>0 \right] \to \F_2 \left[ \bar{\zeta}_i^2 \mid i>0 \right] \otimes\Lambda_{\F_2} \left[ \sigma(\bar{\zeta}_i^2) \mid i>0 \right] \]
sending $b_{2^i-1}\mapsto \bar{\zeta}^2_i$, $\sigma b_{2^i-1}\mapsto \sigma (\bar{\zeta}^2_i)$, and all other generators to zero.  Then, by construction, the $\bar{\xi}_i$ restrict to $\bar{\zeta}_i^2$ and $\bar{\sigma}\bar{\xi}_i$ restricts 
to $\sigma(\bar{\zeta}_i^2)$. 
It then suffices to check that the geometric fixed points of the $\bar{\xi}_i$ and $\bar{\sigma}\bar{\xi}_i$ generate
$H_\ast T^{\Phi}(\BPR)$. 

We proceed as in the proof of \cref{prop:HTHRMUR}. 
First, observe that we have an equivalence $T^{\Phi}(\BPR)\simeq H\F_2\wedge_{\mathrm{BP}} H\F_2,$ and that the resulting K\"unneth spectral sequence computing $H_\ast T^{\Phi}(\BPR)$ collapses.
The geometric fixed points $\overline{\Phi}(\bar{\xi}_i)$ and $\overline{\Phi}(\bar{\sigma}\bar{\xi}_i)$ then generate this homology as an algebra.
The map $H_\ast\mathrm{BP}\to H_\ast H\F_2$ is the embedding of the subalgebra generated by $\bar{\zeta}_i^2$, implying that $\overline{\Phi}(\bar{\sigma}\bar{\xi}_i)^2 = 0$. 
\end{proof}

\begin{thm}\label{thm:HcTHRBPR}
The $E^\infty$-term of the $\mathrm{HHFPSS}_{C_2}(\THR(\BPR))$ converging to the continuous homology, $H_\star^c(\TCR^{-}(\BPR))$, is given by 
\[H_\star \left[\bar{y},\bar{\xi}_i^2 \mid i>0 \right] \otimes_{H_\star}\Lambda_{H_\star} \left[\bar{\xi}_i\bar{\sigma}\bar{\xi}_i \mid i>0\right] \oplus (\text{simple $\bar{y}$-torsion}).\]
\end{thm}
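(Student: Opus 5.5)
The argument will run parallel to the proof of \cref{thm:HcTHRMUR}, with \cref{prop:HTHRBPR} supplying the $E^2$-term. There are two ways to organize it: work directly in $\mathrm{HHFPSS}_{C_2}(\THR(\BPR))$, or push permanent cycles forward along the map of spectral sequences induced by $\THR(\MUR)\to\THR(\BPR)$. I will do both. The map on $E^2$-pages is the surjection $\bar{b}_{2^i-1}\mapsto\bar{\xi}_i$ of \cref{prop:HTHRBPR}, tensored with $H_\star\langle\bar{y}\rangle$. It is compatible with $\bar{\sigma}$, so it sends $\bar{\sigma}\bar{b}_{2^i-1}\mapsto\bar{\sigma}\bar{\xi}_i$, and it kills the remaining generators.

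\textbf{Step 1 ($E^2$ and $d^2$).} The $E^2$-term is
\[
H_\star\langle\bar{y}\rangle[\bar{\xi}_i\mid i>0]\otimes_{H_\star}\Lambda_{H_\star}[\bar{\sigma}\bar{\xi}_i\mid i>0].
\]
By \cref{prop:d2formula}, $d^2(\bar{\xi}_i)=\bar{y}\,\bar{\sigma}\bar{\xi}_i$, and $d^2(\bar{\sigma}\bar{\xi}_i)=0$ because $\bar{\sigma}^2=0$. To extend $d^2$ to monomials I use the twisted Leibniz rule of \cref{prop:twistedsigma} together with the relation $\bar{y}a=\mu_\star(a)\bar{y}$ from \cref{Prop:MultStr}, where $\mu_\star(a)=a+a_\sigma\bar{\sigma}a$.

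It is cleanest to view $E^2$, with the differential $\bar{\sigma}$, as a tensor product of the small complexes
\[
H_\star[\bar{\xi}_i]\otimes\Lambda[\bar{\sigma}\bar{\xi}_i],\qquad \bar{\sigma}(\bar{\xi}_i^{n})=n\,\bar{\xi}_i^{n-1}\bar{\sigma}\bar{\xi}_i+(\text{$a_\sigma$-corrections}).
\]
The homology of each factor is $H_\star[\bar{\xi}_i^2]\otimes\Lambda[\bar{\xi}_i\bar{\sigma}\bar{\xi}_i]$. The $a_\sigma$-correction terms can be handled with a filtration by powers of $a_\sigma$: modulo $a_\sigma$ the computation is the classical one of Bruner--Rognes, and one then checks that the correction terms only change the cycle representatives and not the answer. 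This gives
\[
E^3=E^4\cong H_\star[\bar{y},\bar{\xi}_i^2\mid i>0]\otimes_{H_\star}\Lambda_{H_\star}[\bar{\xi}_i\bar{\sigma}\bar{\xi}_i\mid i>0]
\]
plus simple $\bar{y}$-torsion in filtration zero, coming from the image of $\bar{\sigma}$ there. Alternatively, one can identify the $E^3$-term by checking bases on restriction (the Bruner--Rognes answer for $\mathrm{BP}$) and on reduced geometric fixed points, via \cref{lemma:BWLemma} and the map $\overline{\Phi}$ of \cref{prop:gfpssmap}.

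\textbf{Step 2 (collapse).} The $H_\star[\bar{y}]$-algebra generators of $E^4$ are $\bar{\xi}_i^2$ and $\bar{\xi}_i\bar{\sigma}\bar{\xi}_i$, and I will show both are permanent cycles.
\begin{itemize}
\item Naturality with respect to $\MUR\to\BPR$: the proof of \cref{thm:HcTHRMUR} shows that $\bar{b}_{2^i-1}^2$ and $\bar{b}_{2^i-1}\bar{\sigma}\bar{b}_{2^i-1}$ are permanent cycles in the $\MUR$ spectral sequence, so their images $\bar{\xi}_i^2$ and $\bar{\xi}_i\bar{\sigma}\bar{\xi}_i$ are permanent cycles.
\item Directly: apply \cref{thm:pc}(a) with $x=\bar{\xi}_i$, $k=2^i-1$, $r=1$. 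This produces the permanent cycles $Q^{k\rho}(\bar{\xi}_i)=\bar{\xi}_i^2$ and $Q^{k\rho+\sigma}(\bar{\xi}_i)+\bar{\xi}_i\bar{\sigma}\bar{\xi}_i$. The first summand of the second class vanishes: $Q^{k\rho+\sigma}(\bar{\xi}_i)$ lies in the image of $H_\star\BPR$ in degree $2k\rho+\sigma$, and that group is zero for degree reasons, as in the $\MUR$ case.
\end{itemize}

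By the Leibniz rule (the $E^4$-page is commutative, as remarked in the proof of \cref{thm:HcTHRMUR}) and \cref{lemma:ystructure}, all differentials on $E^4$ vanish. The $\bar{y}$-torsion sits in filtration zero and cannot support or receive longer differentials, by \cref{lemma:ystructure}(3). Hence $E^4=E^\infty$.

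\textbf{Main obstacle.} The $C_2$-hypotheses needed to invoke \cref{thm:pc} are the main concern: $\BPR$ is only known to be $\mathbb{E}_\rho$, not $C_2$-$\mathbb{E}_\infty$. For that reason I prefer the route through naturality from $\MUR$, which is $C_2$-$\mathbb{E}_\infty$ and carries a $C_2$-$\mathbb{E}_\infty$ map to $\BPR$ at the level needed to get a map of spectral sequences. For that route it suffices that $\THR(\MUR)\to\THR(\BPR)$ is a map of $C_2$-spectra with twisted $\t$-action. A secondary point needing care is the $a_\sigma$-twisted $d^2$-homology computation in Step 1; I would verify it against the restriction and geometric fixed point computations, as indicated there.
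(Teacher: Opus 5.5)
Your argument is essentially the paper's: you read off $E^2$ from \cref{prop:HTHRBPR}, use $d^2(\bar{\xi}_i)=\bar{y}\cdot\bar{\sigma}\bar{\xi}_i$, and transfer the collapse at $E^4$ from the $\MUR$ spectral sequence along the map induced by $\THR(\MUR)\to\THR(\BPR)$. The paper finishes by noting that $E^4(\MUR)\to E^4(\BPR)$ is surjective, which gives collapse directly without a Leibniz rule in the $\BPR$ spectral sequence. That rule is not covered by \cref{Prop:MultStr}, since $\THR(\BPR)$ is not known to be $\mathbb{E}_\infty$, so surjectivity is the cleaner way to close your Step 2.
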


\begin{proof}
By \cref{prop:HTHRBPR}, the map induced on $E^2$-pages by $\THR(\MUR)\to\THR(\BPR)$ is surjective, and we have the differentials $d^2(\bar{\xi}_i)=\bar{y}\cdot\bar{\sigma}\bar{\xi}_i.$ The map induced on $E^4$-pages is still surjective, implying collapse at $E^4$.  
\end{proof}

As a corollary, we obtain the following statement for the geometric fixed points. 
\begin{corollary}\label[corollary]{cor:HcgfpTHRbordism}
	The $E^\infty$-term of the $\mathrm{HHFPSS}(T^{\Phi}(\MUR))$ converging to the continuous homology, $H_\ast^cT^{\Phi}(\MUR)^{h\mu_2}$, is given by
	\[\F_2[z] \left[ \overline{\Phi}(\bar{b}_i^2) \mid i>0 \right]\otimes\Lambda_{\F_2} \left[ \overline{\Phi}(\bar{b}_i\bar{\sigma}\bar{b}_i) \mid i>0 \right] \oplus\text{(simple $z$-torsion)}.\]
	The analogous result for $\BPR$ also holds. 
\end{corollary}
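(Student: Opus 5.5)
We push the computation of \cref{thm:HcTHRMUR} forward along the comparison map of \cref{prop:gfpssmap},
\[\overline{\Phi}\colon \mathrm{HHFPSS}_{C_2}(\THR(\MUR))\to \mathrm{HHFPSS}(T^\Phi(\MUR)),\]
and then check that the images account for everything.

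First we describe the $E^2$-page of the target. By the proof of \cref{prop:HTHRMUR}, it is
\[\F_2[z]\otimes\F_2\bigl[\overline{\Phi}(\bar{b}_i)\bigr]\otimes\Lambda_{\F_2}\bigl[\overline{\Phi}(\bar{\sigma}\bar{b}_i)\bigr].\]
The $\mu_2$-action on homology is determined by the compatibility $\overline{\Phi}(\bar{\sigma}x)=\sigma_\mu\overline{\Phi}(x)$ noted after \cref{prop:gfpE4pg}. This gives $\mu_\ast\overline{\Phi}(\bar b_i)=\overline{\Phi}(\bar b_i)+\overline{\Phi}(\bar\sigma\bar b_i)$, which matches \cref{cor:residualmugfp}: the swap exchanges the left and right unit images.

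Next we compute the $d^2$-differential. By \cref{prop:gfpE4pg}, $d^2(x)=z\cdot\sigma_\mu(x)$. Since $\mu_\ast$ is a ring map, $\sigma_\mu$ satisfies the twisted Leibniz rule
\[\sigma_\mu(ab)=\sigma_\mu(a)b+a\sigma_\mu(b)+\sigma_\mu(a)\sigma_\mu(b).\]
Using this, we compute the homology of $d^2$ exactly as in the proof of \cref{thm:HcTHRMUR}. The result is that $E^4$ is
\[\F_2[z,\overline{\Phi}(\bar b_i)^2]\otimes\Lambda\bigl[\overline{\Phi}(\bar b_i)\overline{\Phi}(\bar\sigma\bar b_i)\bigr]\]
plus simple $z$-torsion in filtration zero. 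Equivalently, this is $H^\ast(B\mu_2;H_\ast T^\Phi(\MUR))$, as in \cref{prop:gfpE4pg}. The key input is that $\sigma_\mu(\overline{\Phi}(\bar b_i)^2)=\overline{\Phi}(\bar\sigma\bar b_i)^2=0$.

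It remains to show collapse at $E^4$. Since $\overline{\Phi}$ is multiplicative, the map $\overline{\Phi}\colon E^4\to E^4$ sends $\bar y\mapsto z$, $\bar b_i^2\mapsto\overline{\Phi}(\bar b_i)^2$, and $\bar b_i\bar\sigma\bar b_i\mapsto \overline{\Phi}(\bar b_i)\overline{\Phi}(\bar\sigma\bar b_i)$. In \cref{thm:HcTHRMUR} these source classes are permanent cycles by \cref{thm:pc}. A map of spectral sequences carries permanent cycles to permanent cycles, so the multiplicative generators of the $z$-torsion-free part of the target $E^4$ are permanent cycles. By the Leibniz rule, all of $\F_2[z,\ldots]\otimes\Lambda[\ldots]$ then consists of permanent cycles.

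The main obstacle is the simple $z$-torsion in filtration zero: it could support differentials, and we must check it is not hit. For the targets, \cref{lemma:ystructure}(2), in its $\mu_2$-analogue, shows that every class is $z^n$ times a filtration-zero class. Since all torsion-free generators are cycles, no $d^{2r}$ with $r\ge2$ can hit a torsion-free class $z^n x$ with $x$ a product of cycles. For the sources, the torsion classes have the form $\sigma_\mu(x)$. Such a class is the image $\overline{\Phi}(\bar\sigma x')$, which is itself a cycle of a permanent-cycle class in the source. Alternatively, by the analogue of \cref{lemma:ystructure}(3), $z$-torsion at $E^4$ has height less than $2$ and lives only in filtration $0$, and a longer differential from it would have to land on $z$-torsion-free classes which are already accounted for. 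Thus the spectral sequence collapses at $E^4$. For $\BPR$, we run the same argument using \cref{prop:HTHRBPR} and \cref{thm:HcTHRBPR}, or use surjectivity of the map from $\MUR$ on $E^4$-pages.
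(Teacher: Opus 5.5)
Your proposal is correct and follows the paper's argument: you identify $E^4 \cong H^\ast(B\mu_2; H_\ast T^\Phi(\MUR))$ via $d^2 = z\cdot\sigma_\mu$ (\cref{prop:gfpE4pg}), then deduce collapse because every generator is either $z$-torsion or the $\overline{\Phi}$-image, under the map of \cref{prop:gfpssmap}, of a permanent cycle from \cref{thm:HcTHRMUR}. You treat the torsion more explicitly than the paper, though your ``not hit'' remark is beside the point, since filtration-zero classes can only support differentials; what is needed is that these classes are permanent cycles, and this follows either from your pushforward of the source classes $\bar\sigma x'$ or directly from $z$-linearity, because $z\, d^{2r}(a) = d^{2r}(za) = 0$ while $E^{2r}$ has no $z$-torsion in filtration $-2r$.
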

\begin{proof}
From \cref{prop:gfpE4pg}, and the arguments above, we readily find the $E^4$-page is as in the theorem statement.
All generators are either $z$-torsion or images of permanent cycles under the map \[\overline{\Phi}\colon \mathrm{HHFPSS}_{C_2}(\THR(\MUR))\to\mathrm{HHFPSS}(T^{\Phi}(\MUR))\] of \cref{prop:gfpssmap}, hence they are permanent cycles.
\end{proof}

\subsection{The continuous homology of $\TCR^{-}(\BPR\langle n\rangle)$}
We now consider the truncated Real Brown--Peterson spectra $\BPR\langle n \rangle$ for 
$-1\leq n < \infty$, where, by convention, $\BPR\langle -1\rangle = H\mft$. 
We begin by gathering some results about $\BPR\langle n\rangle$ and its homology.

A computation by Carrick--Hill--Ravenel recorded in \cite[Proposition~4.3]{LPTsplitting} shows that the quotient map $\BPR\langle n\rangle\to H\mft$ induces an inclusion 
\[H_\star\BPR\langle n\rangle \cong \frac{H_\star \left[ \bar{\xi}_i,\bar{\tau}_{n+i} \mid i\geq 1 \right]}{\bar{\tau}_j^2 = u_\sigma\bar{\xi}_{j+1} + a_\sigma\bar{\tau}_{j+1}}\subset\mathcal{A}_\star^{C_2},\]
just as in the underlying homology we have
\[H_\ast\mathrm{BP}\langle n\rangle\cong\F_2 \left[\bar{\zeta}_1^2, \ldots ,\bar{\zeta}_n^2,\bar{\zeta}_{n+1},\ldots\right] \subset\mathcal{A}_\ast.\]
The geometric fixed points of $\BPR\langle n\rangle$ are equivalent to the quotient $H$-module
\[\BPR\langle n\rangle^{\Phi C_2}\simeq (a_\sigma^{-1}\BPR\langle n\rangle)^{C_2} \simeq H[b_n],\] where $b_{-1} = b$ is represented by $u_\sigma/a_\sigma\in a_\sigma^{-1}H_\star$, and the
induced map $H[b_n]\to H[b]$ sends $b_n\mapsto b^{2^{n+1}}$.  
\begin{proposition}\label[proposition]{prop:resgfpHBPRn}
The restriction and reduced geometric fixed point maps for $H_\star\BPR\langle n\rangle$ are given as follows:
\begin{enumerate}[(a)]
	\item $\res\colon H_\star\BPR\langle n\rangle\to H_\ast \mathrm{BP}\langle n\rangle$ sends $\begin{cases}\bar{\xi}_i\mapsto \bar{\zeta}_i^2 \\ \bar{\tau}_{n+i}\mapsto\bar{\zeta}_{n+i+1}\end{cases} (i\geq 1)$,\quad and \\
	\item $\overline{\Phi}\colon H_\star\BPR\langle n\rangle\to \mathcal{A}_\ast[b_n]$ sends $\begin{cases} \bar{\xi}_i\mapsto \bar{\zeta}_i \\ \bar{\tau}_{n+i}\mapsto b_n^{2^{i-1}}\end{cases} (i\geq 1)$. 
\end{enumerate}
\end{proposition}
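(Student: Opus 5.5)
The plan is to use naturality of $\res$ and $\overline{\Phi}$ along the quotient map $q\colon\BPR\langle n\rangle\to H\mft$. By the Carrick--Hill--Ravenel computation quoted above, $q_\star$ is injective, and under it $\bar{\xi}_i,\bar{\tau}_{n+i}$ are literally the elements of $\mathcal{A}_\star^{C_2}$ with those names. So it suffices to compute $\res$ and $\overline{\Phi}$ on these elements of $\mathcal{A}^{C_2}_\star$. We then need to know the targets $H_\ast\mathrm{BP}\langle n\rangle\to\mathcal{A}_\ast$ (injective, underlying) and $H_\ast(\BPR\langle n\rangle^{\Phi C_2})\to H_\ast((H\mft)^{\Phi C_2})$, and to identify the images there.

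For (a), I would recall the standard fact (Hu--Kriz) that restriction $\mathcal{A}^{C_2}_\star\to\mathcal{A}_\ast$ sends $\bar{\xi}_i\mapsto\bar{\zeta}_i^2$ and $\bar{\tau}_i\mapsto\bar{\zeta}_{i+1}$; here $u_\sigma\mapsto 1$ and $a_\sigma\mapsto 0$. As a consistency check, the relation $\bar{\tau}_i^2=u_\sigma\bar{\xi}_{i+1}+a_\sigma\bar{\tau}_{i+1}$ restricts to $\bar{\zeta}_{i+1}^2=\bar{\zeta}_{i+1}^2$. If one prefers to derive this, check it for $\bar{\tau}_0$ and $\bar{\xi}_1$ via the Real orientation / $\CP^\infty$ comparison. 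Then propagate using \cref{thm:SteenrodPowerOps} together with $\res(Q^{k\rho+\epsilon\sigma}x)=Q^{2k+\epsilon}\res(x)$ and the classical relation $Q^{2^{k+1}}\bar{\zeta}_{k+1}=\bar{\zeta}_{k+2}$. Since the restriction square commutes with $\res$ on $H_\star\BPR\langle n\rangle$ and the bottom map $H_\ast\mathrm{BP}\langle n\rangle\to\mathcal{A}_\ast$ is injective, (a) follows.

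For (b), I would use $(H\mft)^{\Phi C_2}\simeq H\F_2[b]$. This gives $H_\ast((H\mft)^{\Phi C_2})\cong\mathcal{A}_\ast[b]$, and $\overline{\Phi}$ on $\mathcal{A}^{C_2}_\star$ is computed by inverting $a_\sigma$ and taking $C_2$-fixed points. The known formula is that $\Phi(\bar{\xi}_i)=\bar{\zeta}_i$ and $\Phi(\bar{\tau}_i)=b^{2^i}$, up to the normalization of $b$. This is consistent with the relation: applying $\Phi$ gives $b^{2^{i+1}}=b\cdot\bar{\zeta}_{i+1}\cdot 0+\cdots$, after accounting that $u_\sigma\mapsto b$ and $a_\sigma\mapsto 1$ in the unreduced version. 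I would verify this for $i=0$, where $\bar{\tau}_0$ corresponds to $u_\sigma/a_\sigma=b$, and then propagate to higher $i$ by power operations or by the relation itself, solving $\Phi(\bar{\tau}_{i+1})=\Phi(\bar{\tau}_i)^2+b\Phi(\bar{\xi}_{i+1})$ and discarding terms killed after reduction.

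For $\BPR\langle n\rangle$ the target is $H_\ast(H\F_2[b_n])=\mathcal{A}_\ast[b_n]$, mapping to $\mathcal{A}_\ast[b]$ by $b_n\mapsto b^{2^{n+1}}$, which is injective. Hence $\overline{\Phi}(\bar{\tau}_{n+i})$ is the unique preimage of $b^{2^{n+i}}=(b^{2^{n+1}})^{2^{i-1}}$, namely $b_n^{2^{i-1}}$, and $\overline{\Phi}(\bar{\xi}_i)=\bar{\zeta}_i$.

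The main obstacle is the bookkeeping in (b). The issue is reconciling the reduced geometric fixed point map, which quotients by $b$, with the fact that for $\BPR\langle n\rangle$ the class $b_n$ survives in the target. Here "$\overline{\Phi}$" must be read as quotienting only the $H\mft$-factor's $b$ while keeping the $b_n$ coming from $\BPR\langle n\rangle^{\Phi C_2}$. I would handle this by writing $H\wedge\BPR\langle n\rangle$ geometric fixed points as $H\F_2[b]\wedge H\F_2[b_n]$ and tracking each factor separately through the Künneth isomorphism.
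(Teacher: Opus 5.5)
Your outline matches the paper's proof. Part (a), the propagation via $\bar\tau_i^2=u_\sigma\bar\xi_{i+1}+a_\sigma\bar\tau_{i+1}$ (with $\overline{\Phi}(u_\sigma)=0$ and $\overline{\Phi}(a_\sigma)=1$), and the pull-back of $b^{2^{n+i}}$ to $b_n^{2^{i-1}}$ along the injection $\mathcal{A}_\ast[b_n]\to\mathcal{A}_\ast[b]$ are fine. The gap is that the two inputs that actually carry (b) are asserted, not proved.

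\emph{Base case.} ``$\bar\tau_0$ corresponds to $u_\sigma/a_\sigma$'' does not settle which $b$ you mean. The coefficient copy $u_\sigma/a_\sigma\in a_\sigma^{-1}H_\star\subset a_\sigma^{-1}\mathcal{A}^{C_2}_\star$ is killed by $\overline{\Phi}$, so the claim cannot be that $\bar\tau_0$ equals it. The missing input is the Hu--Kriz right unit formula $\eta_R(u_\sigma)=u_\sigma+a_\sigma\bar\tau_0$. Applying $\overline{\Phi}$ gives $\overline{\Phi}(\bar\tau_0)=\overline{\Phi}(\eta_R(u_\sigma))$. Naturality of geometric fixed points for the right unit $S\wedge H\mft\to H\mft\wedge H\mft$ identifies this with the Hurewicz image of the unreduced $\Phi(u_\sigma)=b$ in $H_\ast((H\mft)^{\Phi C_2})=\mathcal{A}_\ast[b]$. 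This is how the ``other'' $b$ from your last paragraph enters; without it the base case is unproved.

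\emph{The formula for $\bar\xi_i$.} The computation the paper relies on only gives $\overline{\Phi}(\bar\xi_i)=\bar\zeta_i$ modulo $b$ (modulo $b_n$ in general). Your propagation cannot remove possible $b$-terms. After reduction $u_\sigma\mapsto 0$, so $\bar\xi_{i+1}$ drops out of the relation. In the unreduced relation, $\Phi(\bar\xi_{i+1})$ and $\Phi(\bar\tau_{i+1})$ appear together as two unknowns. The paper's fix is that $\bar\xi_i$ lifts to $H_\star\BPR$ and $\BPR^{\Phi C_2}\simeq H\F_2$, so $\overline{\Phi}(\bar\xi_i)$ lies in $\mathcal{A}_\ast\subset\mathcal{A}_\ast[b]$ and the mod-$b$ statement becomes exact.

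Your alternative of propagating by power operations would need $\overline{\Phi}$ to be compatible with Wilson's equivariant operations, which you do not have. Use the relation instead.
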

\begin{proof}
Since inclusion maps $H_\star\BPR\langle n\rangle \to \mathcal{A}_\star^{C_2}$ remain injective upon restricting to underlying and taking geometric fixed points, it suffices to check the proposition for $\mathcal{A}_\star^{C_2}$.
The claim about restriction follows from \cite[Proposition~4.10]{CGPrealBP}. The claim about geometric fixed points 
is proved modulo $b_n$ in \cite[Proposition~4.11]{CGPrealBP}. In particular, we have $\overline{\Phi}(\bar{\xi}_i) = \bar{\zeta}_i\mod b_n$. However, since $\bar{\xi}_i$ is in the image of 
$H_\star\BPR\to H_\star\BPR\langle n\rangle$, $\overline{\Phi}(\bar{\xi}_i)$ must be in the image of $\mathcal{A}_\star^{C_2}\to\mathcal{A}_\star^{C_2}[b_n]$. 

It remains to compute $\overline{\Phi}(\bar{\tau}_{n+i})$. For this we consider the commutative diagram 

\begin{center}
\begin{tikzcd}
H_\star\ar[d,"\eta_R"]\ar[r,"\Phi_S"]& \F_2[b]\ar[d] \\ 
\mathcal{A}_\star^{C_2} \ar[r,"\overline{\Phi}_H"]& \mathcal{A}_\ast[b].
\end{tikzcd}
\end{center}
We emphasize that $\Phi_S$ is the unreduced geometric fixed points map.
Using the formula for the right unit of $u_\sigma$ in $(H_\star,\mathcal{A}_\star^{C_2})$ given in \cite[Proposition~6.37]{HK01}, we have  \[\overline{\Phi}_H(\eta_R(u_\sigma)) = \overline{\Phi}_H(u_\sigma + a_\sigma\bar{\tau}_0)=\overline{\Phi}(\bar{\tau}_0).\]
On the other hand, $\Phi_S(u_\sigma) = b_{-1}$. Thus $\overline{\Phi}(\bar{\tau}_0) = b_{-1}$. From the relation $\bar{\tau}^2_i = u_\sigma\bar{\xi}_{i+1} + a_\sigma\bar{\tau}_{i+1}$, we find that
$\overline{\Phi}(\bar{\tau}_{i}) = b^{2^{i}}$ for $\bar{\tau}_i\in\mathcal{A}_\star^{C_2}$. Then $\overline{\Phi}(\bar{\tau}_{n+i}) = b_{n}^{2^{i-1}}$ for $\bar{\tau}_{n+i}\in H_\star\BPR\langle n\rangle$. 
\end{proof}

\begin{warning}
In what follows we assume that $\BPR\langle n\rangle$ is a $C_2$-$\mathbb{E}_\infty$-ring. Such forms of $\BPR\langle n\rangle$ only exist for 
$-1\leq n \leq 2$. Nevertheless, it is expected that the arguments of Hahn--Wilson can be generalized to construct $\mathbb{E}_{2\sigma+1}$-forms of $\BPR\langle n\rangle$ for all $n\geq -1$ (see \cite[Remark~1.0.14]{HW20}) and we expect that some of our results can be generalized to include these examples. 
\end{warning}

\begin{proposition}\label[proposition]{prop:HTHRBPRnPhiC2}
When $-1 \leq n \leq 2$, there is an $\mu_2$-equivariant isomorphism 
\[H_\ast T^{\Phi}(\BPR\langle n\rangle) \cong \frac{H_\ast\mathrm{BP}\langle n\rangle \left[ \mu_2\cdot h_1,\ldots ,\mu_2\cdot h_{n+1} \right] \left[\mu_2\cdot  b_{n}\right]}{h_i^2 = (\mu h_i)^2 = \bar{\zeta}_i^2},\]
where $\mu_2\cdot x = \{x,\mu x\}$ denotes a $\mu_2$-orbit, and $| h_i| = 2^i-1, | b_{n}| = 2^{n+1}$. 
\end{proposition}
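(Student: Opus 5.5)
We follow the pattern of \cref{prop:HTHRMUR} and \cref{prop:HTHRBPR}: compute $H_\ast T^{\Phi}(\BPR\langle n\rangle)$ with a Künneth spectral sequence, then read off the $\mu_2$-action from \cref{cor:residualmugfp}. By that proposition there is a $\mu_2$-equivariant equivalence
\[T^{\Phi}(\BPR\langle n\rangle)\simeq \BPR\langle n\rangle^{\Phi C_2}\wedge_{\mathrm{BP}\langle n\rangle}\BPR\langle n\rangle^{\Phi C_2},\]
with $\mu_2$ swapping the two factors. Here the module structure comes from $\Phi^{C_2}(\varepsilon)\colon \mathrm{BP}\langle n\rangle\to \BPR\langle n\rangle^{\Phi C_2}\simeq H\F_2[b_n]$. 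Since $n\le 2$, $\BPR\langle n\rangle$ is $C_2$-$\mathbb{E}_\infty$, so \cref{prop:gfpmodulestructure} applies. It identifies the map on homology as $\overline{\Phi}\circ n$. By \cref{prop:resgfpHBPRn}, $H_\ast(\BPR\langle n\rangle^{\Phi C_2})\cong \mathcal{A}_\ast[b_n]$ (as $H\F_2[b_n]\simeq H\F_2\wedge\Sigma^\infty_+$ of a polynomial generator in degree $2^{n+1}$).

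\textbf{Step 1: the ring map $H_\ast\mathrm{BP}\langle n\rangle\to \mathcal{A}_\ast[b_n]$.} Restriction is injective on the relevant classes, so we can identify norms: $n(\bar\zeta_i^2)=\bar\xi_i^2$ for $i\le n$ and $n(\bar\zeta_{n+i+1})=\bar\tau_{n+i}^2$. Applying $\overline{\Phi}$ and \cref{prop:resgfpHBPRn} then gives
\[\bar\zeta_i^2\mapsto\bar\zeta_i^2 \ (i\le n),\qquad \bar\zeta_{n+i+1}\mapsto b_n^{2^i}.\]
Equivalently, $\overline{\Phi}(\bar\tau_{n+i}^2)=\overline{\Phi}(u_\sigma\bar\xi_{n+i+1}+a_\sigma\bar\tau_{n+i+1})=b_n^{2^{i}}$, using $\overline{\Phi}(u_\sigma)=0$ and $\overline{\Phi}(a_\sigma)=1$. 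I expect this step to be the main obstacle. The norm formula for the $\bar\tau$'s has to be justified carefully, perhaps by comparing with $H\mft$ through the quotient map, where $\mathcal{A}^{C_2}_\star$ is known. It also has to be reconciled with the fact that $\bar\zeta_{n+1}$ must map to something of degree $2^{n+1}-1$, which is odd. Plausibly $\bar\zeta_{n+1}$ maps to a class of $\mathcal{A}_\ast$ while the higher $\bar\zeta$'s hit $b_n$-powers modulo decomposables.

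\textbf{Step 2: Künneth.} As a module over the image, $\mathcal{A}_\ast[b_n]$ is free over $H_\ast\mathrm{BP}\langle n\rangle$. This holds because $H_\ast\mathrm{BP}\langle n\rangle\to\mathcal{A}_\ast[b_n]$ is injective and hits a polynomial subalgebra on regular elements. Hence $\mathrm{Tor}$ is concentrated in degree zero and the Künneth spectral sequence collapses to
\[\mathcal{A}_\ast[b_n]\otimes_{H_\ast\mathrm{BP}\langle n\rangle}\mathcal{A}_\ast[b_n].\]
Writing $h_i$ for $\bar\zeta_i\otimes1$ and $\mu h_i$ for $1\otimes\bar\zeta_i$ ($i\le n+1$), and similarly $b_n,\mu b_n$, the relations over $H_\ast\mathrm{BP}\langle n\rangle$ force $h_i^2=(\mu h_i)^2=\bar\zeta_i^2$. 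The remaining $\bar\zeta_j$ for $j\ge n+2$ are absorbed into the base $H_\ast\mathrm{BP}\langle n\rangle$, identified diagonally, since both copies agree up to the $b_n$-relations. A dimension count via Poincaré series confirms the displayed presentation.

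\textbf{Step 3: the $\mu_2$-action.} Since $\mu_2$ swaps tensor factors, it exchanges $h_i\leftrightarrow\mu h_i$ and $b_n\leftrightarrow\mu b_n$, and fixes the base. This gives the claimed $\mu_2$-equivariant isomorphism, with the orbits $\mu_2\cdot h_i$ and $\mu_2\cdot b_n$.
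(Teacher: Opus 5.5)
Your overall strategy is the same as the paper's: a K\"unneth computation over $H_\ast\mathrm{BP}\langle n\rangle$, with the module structure computed as $\overline{\Phi}\circ n$, followed by relabelling the two tensor factors. However, Step 1, the only step with real content, is wrong, and Step 2 does not follow from it.

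\textbf{The norm is misidentified.} The class $\bar\tau_{n+i}^2 = u_\sigma\bar\xi_{n+i+1}+a_\sigma\bar\tau_{n+i+1}$ lies in degree $(2^{n+i+1}-1)\rho+1-\sigma$. But $n(\bar\zeta_{n+i+1})$ lies in degree $(2^{n+i+1}-1)\rho$, so $\bar\tau_{n+i}^2$ cannot be that norm. Your computation of $\overline{\Phi}(\bar\tau_{n+i}^2)$ is fine; it is the identification of this class as a norm that fails. This is exactly the source of your parity mismatch ($|\bar\zeta_{n+i+1}|=2^{n+i+1}-1$ versus $|b_n^{2^i}|=2^{n+i+1}$). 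The class in the correct degree with restriction $\bar\zeta_{n+i+1}^2$ is $\bar\xi_{n+i+1}$.

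Restriction need not be injective in these degrees (there are negative-cone classes), so argue as the paper does. The kernel of restriction is the image of $a_\sigma$, so $n(\bar\zeta_j)=\bar\xi_j+a_\sigma x$ for $j\geq n+2$. A degree count forces the coefficients of $x$ into the negative cone of $H_\star$, which is $a_\sigma$-torsion and hence killed by $\overline{\Phi}$. Thus $\overline{\Phi}(n(\bar\zeta_j))=\overline{\Phi}(\bar\xi_j)=\bar\zeta_j$. Similarly $\bar\zeta_j^2\mapsto\overline{\Phi}(\bar\xi_j^2)=\bar\zeta_j^2$ for $1\leq j\leq n+1$.

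Note the range here. The restriction formula $\bar\xi_i\mapsto\bar\zeta_i^2$, $\bar\tau_{n+i}\mapsto\bar\zeta_{n+i+1}$ shows that the generators of $H_\ast\mathrm{BP}\langle n\rangle$ are $\bar\zeta_1^2,\ldots,\bar\zeta_{n+1}^2,\bar\zeta_{n+2},\ldots$. So $\bar\zeta_{n+1}$ itself is not in the source, and your worry about its odd degree disappears. The upshot is that the structure map is just the standard inclusion $H_\ast\mathrm{BP}\langle n\rangle\subset\mathcal{A}_\ast\subset\mathcal{A}_\ast[b_n]$, and $b_n$ is not hit at all.

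\textbf{Step 2 does not follow from your Step 1.} If $\bar\zeta_{n+i+1}$ really mapped to $b_n^{2^i}$, the tensor product would acquire relations such as $b_n^2\otimes 1=1\otimes b_n^2$. Then $b_n$ and $\mu b_n$ would not be independent polynomial generators. Your remarks that the $\bar\zeta_j$ are ``absorbed diagonally up to $b_n$-relations'', and the Poincar\'e series count, do not repair this.

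With the correct map, Step 2 is immediate. $\mathcal{A}_\ast[b_n]$ is free over $H_\ast\mathrm{BP}\langle n\rangle$ on the monomials $\bar\zeta_1^{\epsilon_1}\cdots\bar\zeta_{n+1}^{\epsilon_{n+1}}b_n^k$ with $\epsilon_i\in\{0,1\}$ and $k\geq 0$, so the K\"unneth spectral sequence collapses. Then $\mathcal{A}_\ast[b_n]\otimes_{H_\ast\mathrm{BP}\langle n\rangle}\mathcal{A}_\ast[b_n]$ is literally the displayed ring, via $h_i=\bar\zeta_i\otimes 1$, $\mu h_i=1\otimes\bar\zeta_i$, $b_n\otimes 1$ and $1\otimes b_n$. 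The relation $h_i^2=(\mu h_i)^2=\bar\zeta_i^2$ holds because $\bar\zeta_i^2$ lies in the base. Your Step 3 on the swap action is then fine.
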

\begin{proof}
Recall that $\BPR\langle n\rangle^{\Phi C_2}\simeq H[ b_{n}]$, where $| b_{n}| = 2^{n+1}$. Given the assumed ring structure, we have
\[T^{\Phi}(\BPR\langle n\rangle) \simeq H[ b_{n}]\wedge_{\mathrm{BP}\langle n\rangle}H[ b_{n}].\]
By \cref{prop:gfpmodulestructure}, the induced map $H_\ast\mathrm{BP}\langle n\rangle\to\mathcal{A}_\ast[b_n]$ is determined by 
$\overline{\Phi}(n(\bar{\zeta}_i))$  for $i>n+1$ and $\overline{\Phi}(n(\bar{\zeta}_j))$ for $1\leq j\leq n+1$. Since the $C_2$-action on 
$H_\ast\mathrm{BP}\langle n\rangle$ is trivial, we must have $\res(n(\bar{\zeta}_i))=\bar{\zeta}_i^2$. It follows that 
$n(\bar{\zeta}_i) = \bar{\xi}_i\mod a_\sigma$ in $H_{(2^i-1)\rho}\BPR\langle n\rangle$. For degree reasons, any class $x$ such that $n(\bar{\zeta}_i) = \bar{\xi}_i+a_\sigma x$ must be in the negative cone and so must be killed by some power of 
$a_\sigma$. Thus $\overline{\Phi}(n(\bar{\zeta}_i)) = \overline{\Phi}(\bar{\xi}_i) = \bar{\zeta}_i$. Similarly, we can deduce that 
$\overline{\Phi}(n(\bar{\zeta}_j^2)) = \bar{\zeta}_j^2$.  
The homology can then be computed by a collapsing K\"unneth spectral sequence: $H_\ast T^{\Phi}(\BPR\langle n\rangle)\cong \mathcal{A}_\ast[ b_{n}]\otimes_{H_\ast\mathrm{BP}\langle n\rangle}\mathcal{A}_\ast[ b_{n}]$.
The result then follows by setting $ h_i= \bar{\zeta}_i\otimes 1$, $\mu h_i = 1\otimes\bar{\zeta}_i$ for $1\leq i\leq n+1$, and similarly $b_{n} =  b_{n}\otimes 1$, $\mu b_n = 1\otimes b_n$. 
\end{proof}

\begin{proposition}\label[proposition]{prop:HTHRBPRn}
	For $-1 \leq n \leq 2$, the $\mft$-homology of $\THR(\BPR\langle n\rangle)$ is given by 
	\[H_\star\THR(\BPR\langle n\rangle)\cong H_\star(\BPR\langle n\rangle)\otimes_{H_\star}\Lambda_{H_\star} \left[\sigma\bar{\xi}_1,\ldots ,\bar{\sigma}\bar{\xi}_{n+1} \right] \otimes_{H_\star}H_\star[\bar{x}_n],\] where $\bar{x}_n = \bar{\sigma}\bar{\tau}_{n+1}$.
\end{proposition}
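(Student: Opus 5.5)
The plan follows the template of \cref{prop:HTHRMUR} and \cref{prop:HTHRBPR}. We produce candidate $H_\star$-module generators in $H_\star\THR(\BPR\langle n\rangle)$ and verify they form a basis using \cref{lemma:BWLemma}. The classes $\bar{\xi}_i,\bar{\tau}_{n+i}$ come from $H_\star\BPR\langle n\rangle$ via the left unit. We set $\bar{\sigma}\bar{\xi}_j$ for $1\le j\le n+1$ and $\bar{x}_n=\bar{\sigma}\bar{\tau}_{n+1}$, where $\bar{\sigma}$ is the twisted Connes operator of \cref{SS:d2mu}. We then take the monomial basis built from a basis of $H_\star\BPR\langle n\rangle$, exterior monomials in the $\bar{\sigma}\bar{\xi}_j$, and powers of $\bar{x}_n$. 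The degrees are $|\bar{\sigma}\bar{\xi}_j|=(2^j-1)\rho+\sigma$ and $|\bar{x}_n|=(2^{n+1}-1)\rho+1+\sigma=2^{n+1}\rho$. The latter is a multiple of $\rho$, consistent with $\bar{x}_n$ being polynomial.

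\emph{Underlying.} Restriction lands in the classical answer
\[H_\ast\THH(\mathrm{BP}\langle n\rangle)\cong H_\ast\mathrm{BP}\langle n\rangle\otimes\Lambda[\sigma\bar{\zeta}_1^2,\ldots,\sigma\bar{\zeta}_{n+1}^2]\otimes\F_2[\sigma\bar{\zeta}_{n+2}].\]
This follows from the collapsing B\"okstedt spectral sequence, as in Angeliki--Rognes / Bruner--Rognes. By \cref{prop:resgfpHBPRn}(a) and compatibility of $\bar{\sigma}$ with the classical $\sigma$ under restriction, we get $\res(\bar{\sigma}\bar{\xi}_j)=\sigma\bar{\zeta}_j^2$ and $\res(\bar{x}_n)=\sigma\bar{\zeta}_{n+2}$. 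So the restricted monomials form an $\F_2$-basis.

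\emph{Geometric fixed points.} Here we use \cref{prop:HTHRBPRnPhiC2}. The underlying computation identifies $H_\ast T^\Phi(\BPR\langle n\rangle)$, as a $\mu_2$-module, with the algebra generated over $H_\ast\mathrm{BP}\langle n\rangle$ by $h_i,\mu h_i$ ($i\le n+1$) and $b_n,\mu b_n$, subject to $h_i^2=(\mu h_i)^2$. By \cref{prop:resgfpHBPRn}(b) and \cref{cor:residualmugfp}, $\overline{\Phi}(\bar{\xi}_j)=h_j$ and $\overline{\Phi}(\bar{\tau}_{n+1})=b_n$. Since $\overline{\Phi}(\bar{\sigma}x)=\sigma_\mu\overline{\Phi}(x)=\overline{\Phi}(x)+\mu\overline{\Phi}(x)$, we get
\[\overline{\Phi}(\bar{\sigma}\bar{\xi}_j)=h_j+\mu h_j,\qquad \overline{\Phi}(\bar{x}_n)=b_n+\mu b_n.\]
These have square $0$ and $(b_n+\mu b_n)^{k}$ respectively, in characteristic $2$. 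The higher $\bar{\tau}_{n+i}$ map to $b_n^{2^{i-1}}$. A change of variables then shows $H_\ast T^\Phi$ is the free module over $\overline{\Phi}(H_\star\BPR\langle n\rangle)$, which is spanned by monomials in $h_j$ ($j\le n+1$), $\bar{\zeta}_{i}$ ($i>n+1$) and powers of $b_n$. It is free on exterior monomials in $h_j+\mu h_j$ and polynomials in $b_n+\mu b_n$. Applying \cref{lemma:BWLemma} gives the additive isomorphism.

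\emph{Multiplicative relations.} It remains to show $(\bar{\sigma}\bar{\xi}_j)^2=0$ and that $\bar{x}_n$ generates a polynomial algebra. For the first, we argue as in \cref{prop:HTHRMUR}:
\[(\bar{\sigma}\bar{\xi}_j)^2=Q^{(2^j-1)\rho+\sigma}\bar{\sigma}\bar{\xi}_j=\bar{\sigma}Q^{(2^j-1)\rho+\sigma}\bar{\xi}_j,\]
using the Remark after \cref{prop:diffsoperations}. The $C_2$-$\mathbb{E}_\infty$ assumption is exactly what allows this. \Cref{thm:SteenrodPowerOps} places $Q^{(2^j-1)\rho+\sigma}\bar{\xi}_j$ among classes involving $\bar{\tau}$'s or zero, inside $H_\star\BPR\langle n\rangle\subset\mathcal{A}^{C_2}_\star$, and it must then be checked to die under $\bar{\sigma}$. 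The main obstacle is this step and the related check on $\bar{x}_n$, because the relation $\bar{\tau}_j^2=u_\sigma\bar{\xi}_{j+1}+a_\sigma\bar{\tau}_{j+1}$ interacts with the twisted Leibniz rule of \cref{prop:twistedsigma}. If the power operation argument is awkward, the fallback is that squares are detected jointly on restriction and geometric fixed points, which reduces the claim to the two computations above. Polynomiality of $\bar{x}_n$ is automatic from the additive basis once products are detected this way.
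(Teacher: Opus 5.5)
Your additive argument is correct and follows the paper's route. You use the same monomial basis (left-unit classes, $\bar\sigma\bar\xi_j$, and powers of $\bar x_n$), check it against the classical $H_\ast\THH(\mathrm{BP}\langle n\rangle)$ and against $H_\ast T^\Phi(\BPR\langle n\rangle)$, and then apply \cref{lemma:BWLemma}. Your unitriangular substitution $\mu h_j\mapsto h_j+\mu h_j$, $\mu b_n\mapsto b_n+\mu b_n$ spells out what the paper leaves implicit. Polynomiality of $\bar x_n$ needs no further check, since the actual powers $\bar x_n^k$ belong to an $H_\star$-basis. The gap is the exterior relation $(\bar\sigma\bar\xi_j)^2=0$, which you leave unresolved.

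\emph{The power-operation route.} \cref{thm:SteenrodPowerOps} says nothing about $Q^{(2^j-1)\rho+\sigma}\bar\xi_j$; it evaluates that operation on $\bar\tau_j$, not $\bar\xi_j$. The missing observation is that the class vanishes outright, because $H_{(2^{j+1}-2)\rho+\sigma}\BPR\langle n\rangle=0$.
\begin{itemize}
\item A basis monomial $\prod\bar\xi_i^{a_i}\prod\bar\tau_{n+i}^{\epsilon_i}$ has degree $A\rho+b$ with $b=\sum\epsilon_i$. Its coefficient would lie in $H_{p+q\sigma}$ with $p=2^{j+1}-2-A-b$ and $q=p+b+1\ge 1$.
\item A positive-cone coefficient $u_\sigma^\alpha a_\sigma^\beta$ would need $q\le -p\le 0$, which is impossible.
\item A negative-cone coefficient $\theta/(u_\sigma^k a_\sigma^\ell)$ would need $b=3+2k+\ell\ge 3$ with $A\le 2^{j+1}-3\le 2^{n+2}-3$. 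This is impossible, since three distinct $\bar\tau_{n+i}$ already have $\rho$-degree at least $14\cdot 2^n-3$.
\end{itemize}
The left unit is a map of $C_2$-$\mathbb{E}_\infty$-rings, so $(\bar\sigma\bar\xi_j)^2=\bar\sigma\, Q^{(2^j-1)\rho+\sigma}\bar\xi_j=\bar\sigma(0)=0$. Nothing needs to ``die under $\bar\sigma$''. This is exactly the paper's argument; its $Q^{2^j\rho+\sigma}$ is a typo for your index.

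\emph{The fallback.} ``Detected jointly by $\res$ and $\overline{\Phi}$'' is false as a general principle. Both maps kill $u_\sigma a_\sigma$ and the entire negative cone, so for example $u_\sigma a_\sigma\cdot 1$ is a nonzero class invisible to both. \cref{lemma:BWLemma} detects bases, not individual elements. The fallback can be rescued by a degree check in $V=(2^{j+1}-2)\rho+2\sigma$. Write a basis element as having degree $A\rho+b+e\sigma$, where $e$ is the number of exterior factors.
\begin{itemize}
\item A mixed coefficient $u_\sigma^\alpha a_\sigma^\beta$ with $\alpha,\beta\ge 1$ forces $e\ge b+5$. This is impossible since $e\le n+1\le 3$.
\item A negative-cone coefficient forces $b\ge 2$ with $A\le 2^{j+1}-2$. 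Two distinct $\bar\tau$'s have $\rho$-degree at least $3\cdot 2^{n+1}-2$, which exceeds this bound.
\end{itemize}
Only with this check is $(\res,\overline{\Phi})$ injective on $H_V\THR(\BPR\langle n\rangle)$. Then your computations $(\sigma\bar\zeta_j^2)^2=0$ and $(h_j+\mu h_j)^2=0$ finish the proof.
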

\begin{proof}
The proof is similar to that of \cref{prop:HTHRMUR}.  We define classes $\bar{\xi}_i,\bar{\tau}_{n+i}\in H_\star\THR(\BPR\langle n\rangle)$ for $i\geq 1$ as the images under the left unit map, and consequently $\bar{\sigma}\bar{\xi}_i, \bar{x}_n=\bar{\sigma}\bar{\tau}_{n+1}$ then form a corresponding monomial basis $B$. 
By \cref{prop:resgfpHBPRn}, \cite[Sec.~6]{BR05}, and \cref{prop:HTHRBPRnPhiC2}, the restriction and geometric fixed points of $B$ forms a basis of $H_\ast\THH(\BPR\langle n\rangle)$ and $H_\ast T^\Phi(\BPR\langle n\rangle)$. 
Thus, \cref{lemma:BWLemma} implies that $B$ indeed forms an $H_\star$-basis. 
Compatibility of $\bar{\sigma}$ with equivariant power operations then implies \[(\bar{\sigma}\bar{\xi}_j)^2 = Q^{2^j\rho+\sigma}(\bar{\sigma}\bar{\xi}_j) = \bar{\sigma}Q^{2^j\rho+\sigma}(\bar{\xi}_j) = 0,\] where the last equality follows from the fact that $Q^{2^j\rho+\sigma}\bar{\xi}_j = 0$ in $H_\star\BPR\langle n\rangle$ for degree reasons. 
\end{proof}

\begin{thm}\label{thm:HcTHRBPRn}
For $-1 \leq n \leq 2$, the $E^\infty$-term of the $\mathrm{HHFPSS}_{C_2}(\THR(\BPR\langle n\rangle))$ converging to the continuous 
homology $H_\star^c\TCR^{-}(\BPR\langle n\rangle)$ is given by 
\[E^\infty_{\ast,\star}\cong \frac{H_\star \left[\bar{\xi}_i',\bar{\tau}_i' \mid i\geq n+2 \right]}{(\bar{\tau}_j')^2 = u_\sigma\bar{\xi}_{j+1}'+a_\sigma\bar{\tau}_{j+1}'}\otimes_{H_\star}H_\star \left[\bar{\xi}_1^2,...,\bar{\xi}_{n+1}^2,\bar{y}\right] \otimes_{H_\star}\Lambda_{H_\star} \left[\bar{\xi}_1\bar{\sigma}\bar{\xi}_1,...,\bar{\xi}_{n+1}\bar{\sigma}\bar{\xi}_{n+1} \right]\] plus a simple $\bar{y}$-torsion module in filtration zero.
\end{thm}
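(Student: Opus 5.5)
We follow the pattern of \cref{thm:HcTHRMUR}, now starting from \cref{prop:HTHRBPRn}. The $E^2$-page is
\[E^2_{\ast,\star} = H_\star\langle\bar{y}\rangle \otimes_{H_\star} H_\star(\BPR\langle n\rangle)\otimes_{H_\star}\Lambda_{H_\star}[\bar{\sigma}\bar{\xi}_1,\ldots,\bar{\sigma}\bar{\xi}_{n+1}]\otimes_{H_\star}H_\star[\bar{x}_n].\]
By \cref{lemma:ystructure}, it suffices to determine the differentials on filtration zero.

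\textbf{Step 1: the $d^2$-differentials.} We use \cref{prop:d2formula}, so $d^2(x)=\bar{y}\,\bar{\sigma}(x)$, with $\bar{\sigma}$ the Weyl-twisted derivation of \cref{prop:twistedsigma}. Applied to the generators this gives:
\begin{itemize}
\item $d^2(\bar{\xi}_i)=\bar{y}\,\bar{\sigma}\bar{\xi}_i$ for $1\le i\le n+1$;
\item $d^2(\bar{\tau}_{n+1})=\bar{y}\,\bar{x}_n$.
\end{itemize}
For the generators $\bar{\xi}_i$ with $i\ge n+2$ and $\bar{\tau}_j$ with $j\ge n+2$, we show that $\bar{\sigma}$ vanishes. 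The relation $\bar{\tau}_j^2=u_\sigma\bar{\xi}_{j+1}+a_\sigma\bar{\tau}_{j+1}$ expresses the higher generators through lower ones. Compatibility of $\bar{\sigma}$ with power operations, $\bar{\sigma}Q^V=Q^V\bar{\sigma}$, together with \cref{thm:SteenrodPowerOps} ($\bar{\tau}_{k+1}=Q^{2^k\rho}\bar{\tau}_k$ and $\bar{\xi}_{k+1}=Q^{(2^k-1)\rho+\sigma}\bar{\tau}_k$), then gives
\[\bar{\sigma}\bar{\tau}_{j}=Q^{\cdots}(\bar{x}_n)\quad\text{and}\quad \bar{\sigma}\bar{\xi}_{j}=Q^{\cdots}(\bar{x}_n)\quad (j\ge n+2).\]
These iterated operations on $\bar{x}_n$ either vanish or are powers of $\bar{x}_n$ by the top-operation-is-squaring rule. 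Define primed classes $\bar{\xi}_i',\bar{\tau}_i'$ for $i\ge n+2$ as corrections of $\bar{\xi}_i,\bar{\tau}_i$ by suitable multiples of powers of $\bar{\tau}_{n+1}$ and $\bar{x}_n$, chosen so that:
\begin{itemize}
\item they are $\bar{\sigma}$-cycles;
\item they still satisfy $(\bar{\tau}'_j)^2=u_\sigma\bar{\xi}'_{j+1}+a_\sigma\bar{\tau}'_{j+1}$.
\end{itemize}
This mirrors the classical $\bar{\zeta}_i'$ of \cite[Sec.~6]{BR05}. Taking the homology of $\bar{\sigma}$ factorwise, the polynomial–exterior pairs $(\bar{\xi}_i,\bar{\sigma}\bar{\xi}_i)$ contribute $H_\star[\bar{\xi}_i^2]\otimes\Lambda[\bar{\xi}_i\bar{\sigma}\bar{\xi}_i]$. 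The pair $(\bar{\tau}_{n+1},\bar{x}_n)$, subject to $\bar{\tau}_{n+1}^2=u_\sigma\bar{\xi}_{n+2}+a_\sigma\bar{\tau}_{n+2}$, is absorbed by the primed change of variables. This leaves the displayed $E^4$-page plus simple $\bar{y}$-torsion in filtration zero, namely the $\bar{\sigma}$-images not killed because of the Weyl twist. We check the bookkeeping after restriction and reduced geometric fixed points using \cref{lemma:BWLemma}, comparing with \cite[Thm.~6.2]{BR05} on underlying and with \cref{prop:HTHRBPRnPhiC2} via \cref{prop:gfpE4pg}.

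\textbf{Step 2: permanent cycles.} Every algebra generator of $E^4$ should be a permanent cycle.
\begin{itemize}
\item $\bar{y}$: it comes from the sphere (\cref{example:sphere}).
\item $\bar{\xi}_i^2$ and $\bar{\xi}_i\bar{\sigma}\bar{\xi}_i$ for $i\le n+1$: apply \cref{thm:pc}(a) with $r=1$ to $x=\bar{\xi}_i$, where $|x|=(2^i-1)\rho$. This gives $Q^{(2^i-1)\rho}\bar{\xi}_i=\bar{\xi}_i^2$ and $Q^{(2^i-1)\rho+\sigma}\bar{\xi}_i+\bar{\xi}_i\bar{\sigma}\bar{\xi}_i$. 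The first term of the latter vanishes in $H_\star\BPR\langle n\rangle$ for degree reasons, as in \cref{thm:HcTHRMUR}.
\item The primed classes: apply \cref{thm:pc}(b) with $r=1$ to $x=\bar{\tau}_{n+1}$, where $|x|=(2^{n+1}-1)\rho+1$ (so $k=2^{n+1}-1$). This gives $Q^{k\rho+\sigma}(\bar{\tau}_{n+1})+\bar{\tau}_{n+1}\bar{x}_n$, which by \cref{thm:SteenrodPowerOps} is $\bar{\xi}_{n+2}$ plus a correction, i.e.\ $\bar{\xi}'_{n+2}$. Iterating \cref{thm:pc} on the new permanent cycles, or using naturality and the multiplicative structure (\cref{Prop:MultStr}) with the relation $(\bar{\tau}'_j)^2=u_\sigma\bar{\xi}'_{j+1}+a_\sigma\bar{\tau}'_{j+1}$, produces the remaining $\bar{\xi}'_i$ and $\bar{\tau}'_i$.
\end{itemize}
Since $E^4$ is generated by permanent cycles and the torsion lies in filtration zero, the spectral sequence collapses at $E^4$.

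\textbf{Main obstacle.} The hard part is producing $\bar{\tau}'_{n+2}$ and the higher $\bar{\tau}'_j$, and verifying that the corrected classes are simultaneously $\bar{\sigma}$-cycles and permanent cycles. \cref{thm:pc}(b) only directly yields the $Q^{k\rho+\sigma}$ and $Q^{(k+1)\rho}$ classes, and the latter requires care with the $x\delta x$ term. I would first try to identify $\bar{\tau}'_{n+2}$ with the class $Q^{(k+1)\rho}(\bar{\tau}_{n+1})+\bar{\tau}_{n+1}\bar{x}_n$ from \cref{thm:pc}(b), where $\bar{\tau}_{n+1}\bar{x}_n=x\,\delta x$. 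Failing that, I would produce it from $\bar{\xi}'_{n+2}$ through the $u_\sigma,a_\sigma$ relation, checking the required identities on underlying and geometric fixed points via \cref{lemma:BWLemma}.
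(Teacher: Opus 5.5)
Your overall plan (compute $d^2$ via $\bar{\sigma}$, then show every $E^4$ generator is a permanent cycle using \cref{thm:pc}) matches the paper's. However, the production of $\bar{\xi}'_i,\bar{\tau}'_i$ for $i\ge n+3$ does not work, and neither of your two mechanisms closes the gap.

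\emph{Iterating \cref{thm:pc}.} Its hypothesis is a class $x$ supporting a nontrivial $d^{2r}(x)=\bar{y}^r\delta x$. A permanent cycle supports no differential, so there is nothing to iterate on.

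\emph{Using the relation.} Knowing $\bar{\tau}'_j$ is permanent only shows that the sum $u_\sigma\bar{\xi}'_{j+1}+a_\sigma\bar{\tau}'_{j+1}$ is permanent. You cannot split off the summands, since neither $u_\sigma$ nor $a_\sigma$ is invertible and $H_\star$ has $u_\sigma$- and $a_\sigma$-torsion.

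The root cause is your claim in Step 1 that $\bar{\sigma}$ vanishes on $\bar{\tau}_j$ for $j\ge n+2$. It does not. We have $\bar{\tau}_{n+i+1}=Q^{2^{n+i}\rho}\bar{\tau}_{n+i}$, $\bar{\sigma}$ commutes with $Q^V$, and $Q^{2^{n+i}\rho}$ squares the class $\bar{\sigma}\bar{\tau}_{n+i}$ of degree $2^{n+i}\rho$. Hence $\bar{\sigma}\bar{\tau}_{n+i}=\bar{x}_n^{2^{i-1}}$, i.e.\ $d^2(\bar{\tau}_{n+i})=\bar{y}\,\bar{x}_n^{2^{i-1}}\neq 0$ for every $i\ge 1$. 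Only the $\bar{\xi}_{n+i}$ with $i\ge 2$ are $d^2$-cycles.

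That nonvanishing is the missing input. The paper applies \cref{thm:pc}(b) with $r=1$ to each $x=\bar{\tau}_{n+i}$ separately, with $k=2^{n+i}-1$ and $\delta x=\bar{x}_n^{2^{i-1}}$. By \cref{thm:SteenrodPowerOps} this gives, for every $i\ge 1$, the permanent cycles $Q^{k\rho+\sigma}\bar{\tau}_{n+i}=\bar{\xi}_{n+i+1}$ and $Q^{(k+1)\rho}\bar{\tau}_{n+i}+\bar{\tau}_{n+i}\bar{x}_n^{2^{i-1}}=\bar{\tau}_{n+i+1}+\bar{\tau}_{n+i}\bar{x}_n^{2^{i-1}}$. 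The latter are the $\bar{\tau}'_{n+i+1}$. The $\bar{\xi}'_{n+i}$ for $i\ge 3$ are permanent $\bar{\xi}$'s plus products of permanent $\bar{\xi}$'s with powers of $\bar{x}_n$; those powers are simple $\bar{y}$-torsion in filtration zero, hence permanent. The relation $(\bar{\tau}'_j)^2=u_\sigma\bar{\xi}'_{j+1}+a_\sigma\bar{\tau}'_{j+1}$ is then a direct check from $\bar{\tau}_j^2=u_\sigma\bar{\xi}_{j+1}+a_\sigma\bar{\tau}_{j+1}$.

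For $i=1$ you also attached $x\delta x$ to the wrong class. $\bar{\xi}_{n+2}+\bar{\tau}_{n+1}\bar{x}_n$ is not homogeneous, since the second term has degree $(2^{n+2}-1)\rho+1$. So $\bar{\xi}'_{n+2}=\bar{\xi}_{n+2}$ needs no correction. The term $x\delta x$ goes with $Q^{(k+1)\rho}$, giving $\bar{\tau}'_{n+2}=\bar{\tau}_{n+2}+\bar{\tau}_{n+1}\bar{x}_n$, exactly as your final paragraph guesses. The paper's displayed $\bar{\tau}'_{n+2}=\bar{\tau}_{n+2}$ must be read this way, since $d^2\bar{\tau}_{n+2}=\bar{y}\,\bar{x}_n^2\neq 0$.
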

\begin{proof}
By \cref{prop:HTHRBPRn}, the $E^2$-page is identified with 
\[E^2_{\ast,\star}\cong \frac{H_\star \left[ \bar{\xi}_i,\bar{\tau}_{n+i} \mid i\geq 1 \right]}{\bar{\tau}_j^2 = u_\sigma\bar{\xi}_{j+1} + a_\sigma\bar{\tau}_{j+1}}\otimes_{H_\star}\Lambda_{H_\star} \left[\sigma\bar{\xi}_1,\ldots,\bar{\sigma}\bar{\xi}_{n+1}\right] \otimes_{H_\star}H_\star[\bar{x}_n]\langle \bar{y}\rangle\]
The nontrivial $d^2$-differentials follow from \cref{prop:d2formula}:
\[d^2\bar{\xi}_j = \bar{y}\cdot\bar{\sigma}\bar{\xi}_j,\quad 1\leq j\leq n+1,\quad\text{ and }\quad d^2\bar{\tau}_{n+i} = \bar{y}\cdot\bar{x}_n^{2^{i-1}},\quad i\geq 1.\]
For degree reasons $d^2(\bar{\xi}_{n+2}) = 0$, and so compatibility with power operations implies 
$d^2(\bar{\xi}_{n+i}) = 0$ for $i\geq 2$. 

Since the inclusion $H_\star\BPR\langle n\rangle\hookrightarrow \mathcal{A}_\star^{C_2}$ is induced by a map of $C_2$-$\mathbb{E}_\infty$-rings, the action of the 
equivariant power operations on the $\bar{\xi}_i$ and $\bar{\tau}_{n+i}$ follows from \cref{thm:SteenrodPowerOps}. Thus, from \cref{thm:pc} and \cref{thm:SteenrodPowerOps}, we have the following permanent cycles: 
\begin{equation*}
\begin{split}
Q^{(2^j-1)\rho}(\bar{\xi}_j) = \bar{\xi}_j^2,\qquad & Q^{(2^j-1)\rho + \sigma}\bar{\xi}_j + \bar{\xi}_j \bar{\sigma}\bar{\xi}_j = \bar{\xi}_j\bar{\sigma}\bar{\xi}_j, \quad 1\leq j\leq n+1\\
Q^{(2^{n+i}-1)\rho+\sigma}(\bar{\tau}_{n+i}) = \bar{\xi}_{n+i+1},\qquad & Q^{2^{n+i}\rho}\bar{\tau}_{n+i} +\bar{\tau}_{n+i}\bar{\sigma}\bar{\tau}_{n+i} = \bar{\tau}_{n+i+1}+\bar{\tau}_{n+i}\bar{x}_n^{2^{i-1}},\quad i\geq 1,
\end{split}
\end{equation*}
where we have used the fact that for degree reasons $Q^{(2^j-1)\rho+\sigma}\bar{\xi}_j = 0$ in $H_\star\BPR\langle n\rangle$ for $1\leq j\leq n$.
Now set $\bar{\tau}_{n+2}' = \bar{\tau}_{n+2}$, $\bar{\xi}_{n+2}' =\bar{\xi}_{n+2}$, and for $i\geq 3$, 
\[\bar{\xi}_{n+i}' = \bar{\xi}_{n+i} + \bar{\xi}_{n+i-1}\bar{x}_n^{2^{i-2}},\quad\text{ and }\quad \bar{\tau}_{n+i}' = \bar{\tau}_{n+i} + \bar{\tau}_{n+i-1}\bar{x}_n^{2^{i-2}}.\] The relation 
$(\bar{\tau}_j')^2 = u_\sigma\bar{\xi}_{j+1}' + a_\sigma\bar{\tau}_{j+1}'$ can then be checked on the $E^\infty$-page for $j\geq n+2$.
\end{proof}


As a corollary, we can compute the $E^\infty$-page for the continuous homology of $T^\Phi(\BPR\langle n\rangle)^{h\mu_2}$. 
\begin{corollary}\label[corollary]{cor:HcgfpBPRn}
For $-1 \leq n \leq 2$, the $E^\infty$-term of the $\mathrm{HHFPSS}( T^{\Phi}(\BPR\langle n\rangle))$ converging to the continuous homology
$H^c_\ast T^{\Phi}(\BPR\langle n\rangle)^{h\mu_2}$, is given by 
\[\frac{H_\ast\mathrm{BP}\langle n\rangle \left[\bar{\zeta}_1', \ldots ,\bar{\zeta}_{n+1}' \right] \left[z,b_n' \right]}{ \left((\bar{\zeta}_i')^2-\bar{\zeta}_i^4 \right)},\]
plus a simple $z$-torsion module in filtration zero.
\end{corollary}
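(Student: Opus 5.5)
We follow the pattern of the proof of \cref{cor:HcgfpTHRbordism}: compute the $E^4$-page of $\mathrm{HHFPSS}(T^\Phi(\BPR\langle n\rangle))$ directly, and then show that every algebra generator is either $z$-torsion or the image under $\overline{\Phi}$ of a permanent cycle, using the map of spectral sequences of \cref{prop:gfpssmap}.

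\textbf{Step 1: the $E^2$-page.} By \cref{prop:HTHRBPRnPhiC2}, the $E^2$-page is $\F_2[z]\otimes H_\ast T^\Phi(\BPR\langle n\rangle)$. Here $\mu_2$ acts by swapping $h_i\leftrightarrow\mu h_i$ and $b_n\leftrightarrow\mu b_n$. Change generators to
\[
h_i,\quad \sigma_\mu h_i = h_i+\mu h_i,\quad b_n,\quad \sigma_\mu b_n = b_n+\mu b_n .
\]
In these coordinates the relation $h_i^2=(\mu h_i)^2=\bar\zeta_i^2$ gives $(\sigma_\mu h_i)^2=0$. There is no analogous relation for $b_n$, so $\sigma_\mu b_n$ is polynomial.

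\textbf{Step 2: the $d^2$-differentials.} By \cref{prop:gfpE4pg}, $d^2(x)=z\,\sigma_\mu(x)$. Since $\mu_\ast$ is a ring map, $\sigma_\mu$ is a twisted derivation,
\[
\sigma_\mu(ab)=\sigma_\mu(a)\,b+\mu_\ast(a)\,\sigma_\mu(b),
\]
and the $E^4$-page is $H^\ast(B\mu_2;H_\ast T^\Phi)$. A permutation-module computation gives this group cohomology:
\begin{itemize}
\item the fixed tensor factor $H_\ast\mathrm{BP}\langle n\rangle$ contributes itself;
\item each free orbit $\{h_i,\mu h_i\}$ contributes its norm class $h_i\mu h_i$ (from polynomial-type orbits), which is $z$-torsion free;
\item the exterior-type pieces contribute only $z$-torsion in filtration zero.
\end{itemize}
Set $\bar\zeta_i' = h_i\,\mu h_i$. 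Then
\[
(\bar\zeta_i')^2 = h_i^2(\mu h_i)^2 = \bar\zeta_i^4 .
\]
Likewise set $b_n' = b_n\,\mu b_n$. Together with $z$, this produces the $z$-torsion free part
\[
H_\ast\mathrm{BP}\langle n\rangle[\bar\zeta_i'][z,b_n']\big/\bigl((\bar\zeta_i')^2-\bar\zeta_i^4\bigr),
\]
plus simple $z$-torsion in filtration zero.

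\textbf{Step 3: permanent cycles.} The $z$-torsion classes need no argument. For the remaining generators we use \cref{thm:HcTHRBPRn} and \cref{prop:resgfpHBPRn}:
\begin{itemize}
\item $\overline\Phi(\bar\xi_j^2)=h_j^2=\bar\zeta_j^2$.
\item $\overline\Phi(\bar\xi_j\bar\sigma\bar\xi_j)=h_j(h_j+\mu h_j)=\bar\zeta_j^2+\bar\zeta_j'$. Hence $\bar\zeta_j'$ is the image of a permanent cycle, up to a permanent class.
\item For $H_\ast\mathrm{BP}\langle n\rangle$: $\overline\Phi(\bar\xi_{i}')$ for $i\ge n+2$ gives $\bar\zeta_i$ modulo decomposables. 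The left unit $H_\ast\mathrm{BP}\langle n\rangle\to H_\ast T^\Phi$ factors through permanent classes in filtration zero, since these come from $\mathrm{BP}\langle n\rangle$.
\item $\overline\Phi(\bar\tau_{n+1}\,\bar x_n)$ and the $\bar\tau'$ classes map to $b_n$-expressions. In particular $\overline\Phi(\bar\tau_{n+1}')$ yields $b_n'$ modulo lower terms.
\end{itemize}

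\textbf{Step 4: collapse.} The classes above are multiplicative generators of the $z$-torsion free part and all are permanent. Hence, by multiplicativity (\cref{Prop:MultStr}), there are no further differentials and $E^4=E^\infty$.

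\textbf{Main obstacle.} The delicate step is identifying $\overline\Phi$ of the $\bar\tau'$- and $\bar x_n$-classes precisely enough to hit $b_n'=b_n\mu b_n$. Here $\overline\Phi(\bar x_n)=\sigma_\mu b_n$, and one needs
\[
\overline\Phi\bigl(\bar\tau_{n+1}\bar x_n\bigr) = b_n(b_n+\mu b_n) = b_n^2 + b_n'.
\]
If that identification fails, the fallback is degree sparseness: the possible targets of $b_n'$ in the later pages are $z$-multiples of classes in degrees where $E^4$ is zero, which would force $b_n'$ to be a permanent cycle directly.
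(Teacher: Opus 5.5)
Your proposal is correct and is essentially the paper's proof: compute $E^4\cong H^\ast(B\mu_2;H_\ast T^\Phi(\BPR\langle n\rangle))$ from \cref{prop:HTHRBPRnPhiC2} and \cref{prop:gfpE4pg}, then force collapse by realizing the generators $\bar\zeta_j^2$, $\bar\zeta_{n+i}$, $\bar\zeta_j'$ and $b_n'$ as $\overline{\Phi}$-images of the permanent cycles from \cref{thm:HcTHRBPRn}.

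The step you flag as the main obstacle is in fact immediate, so no fallback is needed. Combine $\overline{\Phi}\circ\bar\sigma=\sigma_\mu\circ\overline{\Phi}$ with $\overline{\Phi}(\bar\tau_{n+1})=b_n$ and $\overline{\Phi}(\bar\tau_{n+2})=b_n^2$ from \cref{prop:resgfpHBPRn}. This shows that the permanent cycle $\bar\tau_{n+2}+\bar\tau_{n+1}\bar x_n$ maps to exactly $b_n^2+b_n(b_n+\mu b_n)=b_n'$. Note that $\bar\tau_{n+1}\bar x_n$ by itself is not even a $d^2$-cycle. Your degree-sparseness fallback is also unavailable, since $E^4$ is not sparse.
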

\begin{proof}
From \cref{prop:HTHRBPRnPhiC2}, we can compute the $E^4$-page, which by \cref{prop:gfpE4pg} is just $H^\ast(\mu_2;H_\ast T^{\Phi}(\BPR\langle n\rangle))$. 
Let $\bar{\zeta}_j' = h_j\mu h_j$ for $1\leq j\leq n$ and $b_n' = b_n\mu b_n$. These are $\mu_2$-invariants in $H_\ast T^{\Phi}(\BPR\langle n\rangle)$ and so give 
rise to elements in $E^4_{0,\ast}$. 
One then finds that the $E^4$-page is precisely as in the theorem statement. We claim that the spectral sequence collapses at $E^4$. 

To see this, it suffices to note that $\overline{\Phi}(\tau_{n+1}') = b_n', \overline{\Phi}(\bar{\xi}_{n+i}) = \bar{\zeta}_{n+i}$ for $i\geq 1$, and \[\overline{\Phi}(\bar{\xi}_j^2) =\bar{\zeta}_j^2,\quad
\overline{\Phi}(\bar{\xi}_j\bar{\sigma}\bar{\xi}_j) = \bar{\zeta}_j^2 + \bar{\zeta}_j',\quad 1\leq j\leq n.\]
Thus all generators on the $E^4$-page are images of permanent cycles under the geometric fixed point map 
\[\overline{\Phi} \colon \mathrm{HHFPSS}_{C_2}(\THR(\BPR\langle n\rangle))\to \mathrm{HHFPSS}( T^{\Phi}(\BPR\langle n\rangle)).\qedhere\] 
\end{proof}

\bibliographystyle{alpha}
\bibliography{master}

\end{document}